\numberwithin{equation}{section}       
\numberwithin{figure}{section}       
\theoremstyle{plain}
\newtheorem{prop}{Proposition}[section]
\newtheorem{coro}[prop]{Corollary}
\newtheorem{lemm}[prop]{Lemma}
\newtheorem{theoalph}{Theorem}
\newtheorem{conj}[prop]{Conjecture}
\newtheorem{prob}[prop]{Problem}
\theoremstyle{definition}
\newtheorem{defi}[prop]{Definition}
\theoremstyle{remark}
\newtheorem{rema}[prop]{Remark}
\newtheoremstyle{citing}
  {3pt}
  {3pt}
  {\itshape}
  {}
  {\bfseries}
  {.}
  {.5em}
  {\thmnote{#3}}
\theoremstyle{citing}
\newtheorem*{generic}{}
\newcommand{\partn}[1]{{\smallskip \noindent \textbf{#1.}}}
\DeclareMathAlphabet{\mathpzc}{OT1}{pzc}{m}{it} 
\newcommand{\C}{\mathbb{C}}
\newcommand{\F}{\mathbb{F}}
\newcommand{\Q}{\mathbb{Q}}
\newcommand{\cO}{\mathcal{O}}
\newcommand{\sK}{\mathscr{K}}
\newcommand{\hDelta}{\widehat{\Delta}}
\newcommand{\tK}{\widetilde{K}}
\newcommand{\tP}{\widetilde{P}}
\newcommand{\teta}{\widetilde{\teta}}
\newcommand{\tlambda}{\widetilde{\lambda}}
\renewcommand{\=}{ : = }
\newcommand{\OK}{{\cO_K}}
\newcommand{\mK}{{\mathfrak{m}_K}}
\newcommand{\wtf}{\widetilde{f}}
\newcommand{\whg}{\widehat{g}}
\newcommand{\whA}{\widehat{A}}
\newcommand{\whB}{\widehat{B}}
\DeclareMathOperator{\wideg}{wideg}
\DeclareMathOperator{\ord}{ord}
\DeclareMathOperator{\resit}{r\acute{e}sit}
\begin{document}

\title[Optimal cycles in ultrametric dynamics]{Optimal cycles in ultrametric dynamics and minimally ramified power series}

\author[Karl-Olof Lindahl]{Karl-Olof Lindahl$^\dag$}
\thanks{$^\dag$ This research was supported by the Swedish Research and Education in Mathematics Foundation SVeFUM
and by MECESUP2 (PUC-0711) and DICYT (U. de Santiago de Chile).}
\address{Department of Mathematics, Linn\ae us University, 351 95, V\"{a}xj\"{o}, Sweden}
\email{karl-olof.lindahl@lnu.se}
\urladdr{\url{http://karl-lindahl.com/}}

\author[Juan Rivera-Letelier]{Juan Rivera-Letelier$^\ddag$}
\thanks{$^\ddag$ This research was partially supported by FONDECYT grant 1100922, Chile}
\address{Juan Rivera-Letelier, Facultad de Matem{\'a}ticas, Pontificia Universidad Cat{\'o}lica de Chile, Avenida Vicu{\~n}a Mackenna~4860, Santiago, Chile}
\email{riveraletelier@mat.puc.cl}
\urladdr{\url{http://rivera-letelier.org/}}

\begin{abstract}
We study ultrametric germs in one variable having an irrationally indifferent fixed point at the origin with a prescribed multiplier.
We show that for many values of the multiplier, the cycles in the unit disk of the corresponding monic quadratic polynomial are ``optimal'' in the following sense: They minimize the distance to the origin among cycles of the same minimal period of normalized germs having an irrationally indifferent fixed point at the origin with the same multiplier.
We also give examples of multipliers for which the corresponding quadratic polynomial does not have optimal cycles.
In those cases we exhibit a higher degree polynomial such that all of its cycles are optimal.
The proof of these results reveals a connection between the geometric location of periodic points of ultrametric power series and the lower ramification numbers of wildly ramified field automorphisms.
We also give an extension of
Sen's theorem on wildly ramified field automorphisms, and a
characterization of minimally ramified power series in terms of the iterative residue.
\end{abstract}

\subjclass[2010]{37P05, 11S15, 37F50}
\keywords{Non-Archimedean dynamical systems, periodic points, rotation
  domains, ramification theory}

\maketitle

%
%

\section{Introduction}
\label{s:introduction}

In proving the optimality of the Bruno condition for the local linearization of fixed points of holomorphic germs, Yoccoz showed the following dichotomy for quadratic polynomials of the form
\begin{equation}
\label{e:quadratic}
P_\lambda(z) \= \lambda z + z^2,
\end{equation}
where the complex number~$\lambda$ satisfies~$|\lambda| = 1$ and is not a root of unity: Either~$P_\lambda$ is locally linearizable at~$z = 0$, or every neighborhood of~$z = 0$ contains a periodic cycle different than~$z = 0$, see~\cite{Yoc95b}.
This last property is usually known as the \emph{small cycles property}, and it is clearly an obstruction for local linearization.
In fact, in the case~$P_{\lambda}$ is not locally linearizable at~$z = 0$, Yoccoz proved more: The distance of a small cycle of~$P_{\lambda}$ to~$z = 0$ is essentially the smallest possible among cycles of the same minimal period of normalized holomorphic germs of the form
\begin{equation}
  \label{e:series}
  f(z) = \lambda z + \cdots,
\end{equation}
see~\cite[\S$6.6$]{Yoc95b}.

In this paper we prove an analogous result over an arbitrary ultrametric field.
When the residue characteristic of the ground field is odd and~$0 < |\lambda - 1| < 1$, we show that every cycle of the quadratic polynomial~\eqref{e:quadratic} that is in the open unit disk and that has minimal period at least~$2$, is ``optimal'' in the following sense: It minimizes the distance to~$z = 0$ among cycles of the same minimal period of normalized germs of the form~\eqref{e:series}, see Theorem~\ref{t:quadratic optimality and minimality} in~\S\ref{ss:optimal cycles} and the remarks that follow.
When either the residue characteristic of the field is~$2$, or~$|\lambda - 1| = 1$, the quadratic polynomial~\eqref{e:quadratic} does not necessarily have this property.
In this case we find a higher degree polynomial such that all of its cycles in the open unit disk are optimal, see Theorem~\ref{t:periodic optimality} in~\S\ref{ss:optimal cycles}.
A consequence of these results is that irrationally indifferent periodic points are isolated (Corollary~\ref{c:irrational are isolated} in~\S\ref{ss:periodic bound}).
This is new in positive characteristic (in characteristic zero it follows from the local linearization result of Herman and Yoccoz~\cite{HerYoc83}).

The proof of these results reveals a connection between the geometric location of periodic points of ultrametric power series and the lower ramification numbers of wildly ramified field automorphisms, as studied by Sen~\cite{Sen69}, Keating~\cite{Kea92}, Laubie and Sa{\"{\i}}ne~\cite{LauSai98}, Wintenberger~\cite{Win04}, and others.
In fact, we show that for a generic power series of the form~$f(z) = \lambda z + \cdots$, normalized so that it has integer coefficients, the existence of an optimal cycle is equivalent to the reduction of~$f$ having the least possible lower ramification numbers, see Theorem~\ref{t:optimality and minimality} in~\S\ref{ss:minimally ramified intro}.
Such ``minimally ramified'' power series were previously considered by Laubie, Movahhedi, and Salinier in~\cite{LauMovSal02}, in their study of Lubin's conjecture~\cite{Lub94}.

In proving our main results, we give an extension of the main theorem
of Sen in~\cite{Sen69} (Theorem~\ref{t:higher order sen}
in~\S\ref{ss:higher order sen}), and give a characterization of
minimally ramified power series in terms of the iterative residue,
which is a conjugacy invariant introduced by {\'E}calle in the complex setting (Theorem~\ref{t:characterization of minimally ramified} in~\S\ref{s:characterization of minimally ramified}).

We now proceed to describe our main results in more detail.

\subsection{Periodic points of normalized power series}
\label{ss:periodic of normalized}
Let~$(K, | \cdot|)$ be an ultrametric field and denote by~$\OK$ the ring of integers of~$K$, by~$\mK$ the maximal ideal of~$\OK$, and by~$\tK \= \OK / \mK$ the residue field of~$K$.

Let~$\lambda$ in~$K$ be such that~$|\lambda| = 1$, and let
\begin{equation}
  \label{e:germ}
f(z) = \lambda z + \cdots  
\end{equation}
be a power series in~$K[[z]]$ converging on a neighborhood of~$z = 0$.
Through a scale change, we can assume~$f$ has coefficients in~$\OK$.
We say a power series~$f$ as in~\eqref{e:germ} is \emph{normalized} if
it has coefficients in~$\OK$.
When the ground field~$K$ is algebraically closed, the power~$f$ is
normalized if and only if it converges and is univalent on the open unit disk~$\mK$, see for example~\cite[\S$1.3$]{Rivthese}.
So this normalization is the same as the one used in the complex
setting by Yoccoz in~\cite{Yoc95b}.
In what follows we only consider normalized power series.

A normalized power series of the form~\eqref{e:germ} maps~$\mK$ to
itself isometrically, see for example~\cite[\S$1.3$]{Rivthese}.
If either the residue characteristic of~$K$ is zero or the
reduction~$\tlambda$ of~$\lambda$ has infinite order in~$\tK^*$, then
a normalized power series as in~\eqref{e:germ} has at most a finite number of periodic points, see Lemma~\ref{l:admissible periods}.
Thus, to simplify the exposition, in the rest of this introduction we assume that the residue characteristic~$p$ of~$K$ is positive and that the order~$q$ of~$\tlambda$ in~$\tK^*$ is finite.
Then~$q$ is not divisible by~$p$, and the minimal period of each periodic point in~$\mK \setminus \{ 0 \}$ is of the form~$qp^n$, for some integer~$n \ge 0$, see Lemma~\ref{l:admissible periods}.

\subsection{Periodic points lower bound}
\label{ss:periodic bound intro}
The main theme of this paper is the optimality of the following lower bound.
\begin{generic}[Periodic Points Lower Bound]
Let~$p$ be a prime number, and~$(K, |\cdot|)$ an ultrametric field of residue characteristic~$p$.
Moreover, let~$\lambda$ in~$K$ be such that~$|\lambda| = 1$, and such that the order~$q$ of~$\tlambda$ in~$\tK^*$ is finite.
Then, for every power series
$$ f(z) = \lambda z + \cdots  \text{ in } \OK[[z]] $$
and every integer~$n \ge 1$ such that~$\lambda^{qp^n} \neq 1$, the following property holds: For every periodic point~$z_0$ of~$f$ of minimal period~$qp^n$, we have
\begin{equation}
\label{e:periodic bound}
|z_0|
\ge
\left| \frac{\lambda^{q p^n} - 1}{\lambda^{q p^{n - 1}} - 1} \right|^{\frac{1}{q p^n}}.
\end{equation}
\end{generic}
See Lemma~\ref{l:periodic bound} for a more detailed statement, which includes a lower bound for periodic points of minimal period~$q$.

Although the statement of the Periodic Points Lower Bound is new, similar estimates were shown in~\cite{EinEveWar04a,EinEveWar04b} in the case~$K$ is of characteristic zero.
The idea of the proof can be traced back, at least, to Cremer's
example of a complex polynomial having an irrationally indifferent
fixed point that is not locally linearizable, see for example~\cite[\S$11$]{Mil06}.
It boils down to the observation that the product of the norms of all the fixed points of~$f^{qp^n}$ in~$\mK \setminus \{ 0 \}$ is equal to~$\left| \lambda^{qp^n} - 1 \right|$.

Suppose that the characteristic of~$K$ is equal to~$p$ and that~$\lambda^q \neq 1$.
Then the lower bound in~\eqref{e:periodic bound} is equal to~$\left| \lambda^q - 1 \right|^{\frac{p - 1}{qp}}$, which is independent of~$n$.
Thus, the following corollary is a direct consequence of the Periodic Points Lower Bound.
\begin{coro}
\label{c:irrational are isolated}
Every irrationally indifferent periodic point is isolated in positive characteristic.
\end{coro}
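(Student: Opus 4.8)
The plan is to reduce to the case of a fixed point at the origin and then apply the Periodic Points Lower Bound, using crucially that in characteristic~$p$ the right-hand side of~\eqref{e:periodic bound} is independent of~$n$.

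First I would normalize. Let~$z_0$ be an irrationally indifferent periodic point of a power series~$f$, of minimal period~$m$ and multiplier~$\lambda$, so that~$|\lambda| = 1$ and~$\lambda$ is not a root of unity; replacing~$K$ by a complete extension if needed, assume~$z_0 \in K$. Set~$g_0 \= f^m$ and~$g(w) \= g_0(z_0 + w) - z_0$, a power series with~$g(0) = 0$ and~$g'(0) = \lambda$; after a scale change~$g$ is normalized. Any periodic point of~$f$ close enough to~$z_0$ is fixed by an iterate of~$g_0$, hence of~$g$, and corresponds under~$w = z - z_0$ to a periodic point of~$g$ in~$\mK$. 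So it suffices to produce a constant~$c > 0$ such that every periodic point of~$g$ in~$\mK \setminus \{ 0 \}$ has norm at least~$c$; then a small enough ball around~$z_0$ meets the periodic set of~$f$ only in~$z_0$.

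Next I would split on the order of~$\tlambda$ in~$\tK^*$. If it is infinite, Lemma~\ref{l:admissible periods} gives that~$g$ has only finitely many periodic points and we are done. Otherwise denote it by~$q$: since the characteristic is~$p$ we have~$p \nmid q$, and as~$\lambda$ is not a root of unity the element~$u \= \lambda^q - 1$ is nonzero, while~$\tlambda^q = 1$ gives~$|u| < 1$. By Lemma~\ref{l:admissible periods} every periodic point of~$g$ in~$\mK \setminus \{ 0 \}$ has minimal period~$q p^n$ for some~$n \ge 0$. For~$n \ge 1$, the identity~$(1 + u)^{p^n} = 1 + u^{p^n}$ in characteristic~$p$ shows~$\lambda^{q p^n} - 1 = u^{p^n} \neq 0$, so the Periodic Points Lower Bound applies: a periodic point~$z'$ of minimal period~$q p^n$ satisfies
\[
|z'| \ge \left| \frac{u^{p^n}}{u^{p^{n-1}}} \right|^{\frac{1}{q p^n}} = |u|^{\frac{p^n - p^{n-1}}{q p^n}} = |u|^{\frac{p - 1}{p q}} \ge |u|.
\]
For~$n = 0$, either Lemma~\ref{l:periodic bound} or the elementary estimate obtained by dividing~$g^q(z) - z = (\lambda^q - 1) z + \cdots$ by~$z$ gives~$|z'| \ge |\lambda^q - 1| = |u|$. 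Thus~$c = |u| > 0$ works, and~$z_0$ is isolated.

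I expect the only real content to be the observation that drives everything: over a field of characteristic~$p$ the ``freshman's dream'' collapses the bound~\eqref{e:periodic bound} to the quantity~$|u|^{(p-1)/(pq)}$, which does not depend on~$n$, so arbitrarily small cycles cannot accumulate at the origin however large their period. The reduction to a fixed point, the period-$q$ estimate, and the infinite-order case are all routine given Lemmas~\ref{l:admissible periods} and~\ref{l:periodic bound}.
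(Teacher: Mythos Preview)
Your argument is correct and follows exactly the route the paper takes: the paper remarks, just before stating the corollary, that in characteristic~$p$ one has~$\lambda^{qp^n}-1=(\lambda^q-1)^{p^n}$, so the right-hand side of~\eqref{e:periodic bound} equals~$|\lambda^q-1|^{(p-1)/(qp)}$ independently of~$n$, and then declares the corollary ``a direct consequence of the Periodic Points Lower Bound.'' You simply spell out what the paper leaves implicit---the reduction of an arbitrary irrationally indifferent periodic point to a normalized germ at the origin, the infinite-order case via Lemma~\ref{l:admissible periods}, and the period-$q$ case via Lemma~\ref{l:periodic bound}---so there is nothing to correct.
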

Combined with the fact that the quadratic polynomial~$\lambda z + z^2$ in~$K[z]$ is not locally linearizable at~$z = 0$ when~$p$ is odd and~$|\lambda - 1| < 1$, see~\cite[Theorem~$2.3$]{Lin04},\footnote{According to Herman's conjecture~\cite[Conjecture~$2$]{Her87}, in positive characteristic a typical indifferent periodic point is not locally linearizable; yet, it is isolated as a periodic point by Corollary~\ref{c:irrational are isolated}. P{\'e}rez-Marco showed that in the complex setting there are maps with similar properties, see~\cite[Theorem~I.$3.1$]{Per97}.} the corollary above shows that in odd characteristic the existence of small cycles is not an obstruction to local linearization,\footnote{In a field of characteristic~$2$, the quadratic polynomial~$\lambda z + z^2$ is locally linearizable at~$z = 0$ when~$|\lambda - 1| < 1$, see~\cite[Theorem~$2.3$]{Lin04}. We can consider instead the polynomial~$\lambda z + z^3$, which is not locally linearizable at~$z = 0$ when~$|\lambda - 1| < 1$, see~\cite[Theorem~$1.1$]{Lin10}. Thus, Corollary~\ref{c:irrational are isolated} also proves that in characteristic~$2$ the existence of small cycles is not an obstruction to local linearization.} see~\cite[Corollary~C]{Lin13} for a somewhat analogous phenomenon in the~$p$-adic setting.
This is in contrast with the complex field case: Yoccoz showed that if~$\lambda$ in~$\C^*$ is not a root of unity and the quadratic polynomial~$\lambda z + z^2$ in~$\C[z]$ is not locally linearizable at~$z = 0$, then every neighborhood of~$z = 0$ contains a periodic cycle, see~\cite[\S$6.6$]{Yoc95b}.

In view of Corollary~\ref{c:irrational are isolated}, we propose the following conjecture.
\begin{conj}
In positive characteristic, every periodic point whose multiplier is a root of unity is either isolated as a periodic point, or has a neighborhood on which an iterate of the map is the identity.
\end{conj}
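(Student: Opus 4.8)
A natural line of attack on this conjecture runs as follows.
After replacing~$K$ by the completion of an algebraic closure, we may assume~$K$ is algebraically closed and complete.
Passing to a suitable iterate of the map so that the given periodic point becomes a fixed point, and translating it to the origin, we reduce to the case of a \emph{fixed} point at~$z = 0$ of a normalized power series~$f(z) = \lambda z + \cdots$ in~$\OK[[z]]$ with~$\lambda$ a root of unity; since the characteristic is~$p > 0$, the order~$q$ of~$\lambda$ is prime to~$p$ (and equals the order of~$\tlambda$ in~$\tK^*$).
Put~$g \= f^q$, so that~$g(z) = z + \cdots$.
As every periodic point of~$f$ is a periodic point of~$g$, and as any iterate of~$g$ that is the identity near~$z = 0$ yields an iterate of~$f$ with the same property, it suffices to prove the dichotomy for~$g$.
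If~$g$ is the identity as a formal power series we are in the second alternative; otherwise~$g(z) = z + c z^{i + 1} + \cdots$ with~$c \ne 0$ and~$i \ge 1$, and we must show that~$z = 0$ is isolated in the set of periodic points of~$g$.

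By Lemma~\ref{l:admissible periods}, every periodic point of~$g$ in~$\mK \setminus \{ 0 \}$ close enough to~$0$ has minimal period a power of~$p$.
For each fixed~$n \ge 0$ the power series~$g^{p^n}(z) - z$ is not identically zero --- otherwise~$g$, and hence~$f$, would have finite order and we would again land in the second alternative --- so, $K$ being algebraically closed and complete and~$g^{p^n}$ converging on all of~$\mK$, its zeros form a discrete subset of~$\mK$; in particular, for each fixed~$n$, the periodic points of~$g$ of period dividing~$p^n$ are bounded away from~$0$.
Hence, if~$z = 0$ fails to be isolated, there exist, for~$n$ in an infinite set, points~$w_n \in \mK \setminus \{0\}$ of period exactly~$p^n$ with~$|w_n| \to 0$, and the conjecture reduces to the uniform lower bound
\begin{equation*}
  \inf_{n \ge 0} \rho_n > 0,
  \qquad
  \rho_n \= \min \left\{ |w| : w \in \mK \setminus \{0\},\ g^{p^n}(w) = w \right\}
\end{equation*}
(with~$\rho_n \= 1$ when no such~$w$ exists).

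The natural device for estimating~$\rho_n$ is the connection --- central to this paper --- between the location of the periodic points of~$g$ and the lower ramification numbers~$i = i_0 < i_1 < i_2 < \cdots$ of~$g$, or equivalently of the reduction~$\widetilde{g}$ when~$|c| = 1$.
Writing~$g^{p^n}(z) - z = z^{i_n + 1} u_n(z)$ with~$u_n(0) \ne 0$, the nonzero periodic points of period dividing~$p^n$ in~$\mK$ are exactly the zeros of~$u_n$ in~$\mK$, so that~$\rho_n$ is governed by the Newton polygon of~$u_n$ --- ultimately by the valuation of~$u_n(0)$ (the leading coefficient of~$g^{p^n}(z) - z$) together with the jump~$i_n - i_{n-1}$.
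Since~$g$ has infinite order, $i_n \to \infty$, and Theorem~\ref{t:higher order sen}, which extends Sen's theorem~\cite{Sen69}, imposes strong congruences between consecutive~$i_n$ and so forces~$i_n - i_{n-1} \to \infty$.
The model case is the quadratic polynomial of odd residue characteristic with~$|\lambda - 1| < 1$: there one computes directly that, for every period, all the periodic points lie on a single sphere, of radius~$|\lambda^q - 1|^{(p-1)/(qp)}$ independent of~$n$ (compare the computation following Corollary~\ref{c:irrational are isolated}).
The plan is to show that this clustering on a single sphere persists for general~$g$ and that the radii stay bounded below: once the periodic points of period~$p^n$ all have norm~$\rho_n = |u_n(0)|^{1/k_n}$, with~$k_n \ge 1$ their number, a uniform lower bound~$|u_n(0)| \ge \varepsilon > 0$ would give~$\rho_n \ge \varepsilon$ for all~$n$, since~$|u_n(0)| \le 1$.

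The main obstacle lies exactly in controlling~$u_n(0)$, i.e. the leading coefficient of~$g^{p^n}(z) - z$, uniformly in~$n$.
In the situation of the Periodic Points Lower Bound the hypothesis~$\lambda^{qp^n} \ne 1$ keeps the product of the norms of the relevant periodic points equal to the nonzero quantity~$| \lambda^{qp^n} - 1 |$, which is precisely what makes the estimate~\eqref{e:periodic bound} available; when~$\lambda$ is a root of unity this product degenerates to zero, and a genuinely new input is needed to prevent the ramification data of~$g^{p^n}$ from drifting towards the boundary of convergence as~$n \to \infty$.
Providing such a uniform estimate --- presumably through a careful analysis of how one~$p$-th iterate transforms both the Newton polygon and the lower ramification filtration, in the spirit of Keating~\cite{Kea92} and of Theorem~\ref{t:higher order sen} --- together with the clustering statement above, is the heart of the conjecture, and is what we have not been able to establish in general.
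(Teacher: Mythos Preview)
The statement you are addressing is labelled a \emph{Conjecture} in the paper, not a theorem; the paper offers no proof of it. Immediately after stating the conjecture the authors note that the characteristic-zero analogue is already known (via Lemma~\ref{l:admissible periods} or the iterative logarithm), and that in a companion work they settle the positive-characteristic case only for \emph{generic} parabolic points. So there is no proof in the present paper to compare your proposal against, and your write-up is, appropriately, a sketch of a strategy together with an honest identification of where it stalls.

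Your reductions are sound: passing to~$g=f^q$ tangent to the identity, splitting into ``$g$ has finite order'' versus ``every $g^{p^n}(z)-z$ is nonzero,'' and reducing the latter case to a uniform lower bound~$\inf_n\rho_n>0$ is exactly the right framing. You also correctly locate the obstruction: the Cremer-type product argument behind the Periodic Points Lower Bound (Lemma~\ref{l:periodic bound}) uses~$\lambda^{qp^n}\neq1$ to make the constant term of the relevant quotient a \emph{nonzero} element of~$K$, and this degenerates precisely when~$\lambda$ is a root of unity.

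One point to sharpen: you invoke two unproved ingredients --- the clustering of all period-$p^n$ points on a single sphere, and the bound~$|u_n(0)|\ge\varepsilon$ --- and you need \emph{both}. Without clustering, a lower bound on~$|u_n(0)|$ controls only the product of the norms of the nonzero periodic points, which yields an \emph{upper} bound on~$\rho_n$, not a lower one; so the clustering claim is load-bearing, not illustrative. For comparison, the only partial evidence the paper itself provides (Proposition~\ref{p:all minimal} in Appendix~\ref{s:all minimal}) proceeds along entirely different lines: for the special family~$\lambda z\,S(z)^p$ one shows, by an explicit inductive formula for the $p^n$-th iterate, that no periodic points of high minimal period exist at all. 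That is a structural argument specific to the $p$-th-power shape, not a Newton-polygon estimate, and it does not obviously generalize.
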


In~\cite{LinRivparabolic} we solve this conjecture in the affirmative,
in the case of generic parabolic points.
For an ultrametric ground field of characteristic zero, the assertion of the conjecture does hold: When the residue characteristic is zero it follows from Lemma~\ref{l:admissible periods}, and when the residue characteristic is positive it follows from the fact that periodic points are the zeros of the iterative logarithm, see~\cite[Proposition~$3.6$]{Rivthese} and also~\cite{Lub94} for the case where~$K$ is discretely valued.

Suppose now that~$K$ is of characteristic zero and that~$\lambda$ is not a root of unity.
Then a direct computation shows that the lower bound in~\eqref{e:periodic bound} converges to~$1$ as~$n \to + \infty$.
So, the Periodic Points Lower Bound implies that for every~$r$ in~$(0, 1)$ the number of periodic points of~$f$ in~$\{ z \in K : |z| \le r \}$ is finite.\footnote{This also follows from the fact that the periodic points of~$f$ are the zeros of the iterative logarithm of~$f$, which is given by an analytic power series that converges on~$\mK$, see~\cite[Proposition~$3.16$]{Rivthese} and also~\cite{Lub94} for the case where~$K$ is discretely valued.}
In fact, the Periodic Points Lower Bound gives a quantitative estimate
of the speed at which periodic points separate from~$z = 0$ as the
period increases: Just observe that there is~$n_0 \ge 1$ that only depends on~$\lambda^q$, such that for every integer~$n \ge n_0$ we have
$$ \left| \frac{\lambda^{q p^n} - 1}{\lambda^{q p^{n - 1}} - 1} \right|
=
|p|, $$
see also~\cite[Remark~$3.6$]{EinEveWar04a} and~\cite[Theorem~$1$]{EinEveWar04b}.

\subsection{Optimal cycles}
\label{ss:optimal cycles}
Let~$p$, $K$, $\lambda$, and $q$ be as in the Periodic Points Lower Bound.
Then we say that a power series~$f(z) = \lambda z + \cdots$ in~$\OK[[z]]$ \emph{has an optimal cycle of period~$qp^n$}, if~$f$ has a periodic point~$z_0$ of minimal period~$qp^n$ such that~\eqref{e:periodic bound} holds with equality.
If~$f$ has an optimal cycle of period~$qp^n$, then this is in fact the only cycle of minimal period~$qp^n$ of~$f$, see Theorem~\ref{t:optimality and minimality} below.

\begin{theoalph}
\label{t:periodic optimality}
Let~$p$ be a prime number, and~$(K, |\cdot|)$ an algebraically closed ultrametric field of residue characteristic~$p$.
Moreover, let~$\lambda$ in~$K$ be such that~$|\lambda| = 1$, such that the order~$q$ of~$\tlambda$ in~$\tK^*$ is finite, and such that~$\lambda^q \neq 1$.
If~$K$ is of characteristic zero, assume in addition that~$\lambda$ is transcendental over the prime field of~$K$.
Then there is a polynomial~$P(z) = \lambda z + \cdots$ in~$\OK[z]$ of degree at most~$2q + 1$, having for each integer~$n \ge 1$ an optimal cycle of period~$qp^n$.
\end{theoalph}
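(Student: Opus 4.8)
The plan is to take $P$ of the form
$$P(z) \= \lambda z + A z^{q + 1} + B z^{2q + 1}, \qquad A, B \in \OK,$$
with $A$ or $B$ a unit, and to choose the reductions $\tA,\tB$ so that the reduction $\wtf^{\circ q}$ of the $q$-th iterate of $f \= P$ has the least possible lower ramification numbers; optimality of the cycles will then follow almost automatically. Such a $P$ has degree at most $2q + 1$, as required.

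\emph{Step 1: exploiting the $\mu_q$-symmetry.} All exponents appearing in $P$ are congruent to $1$ modulo $q$, so $P(\zeta z) = \zeta P(z)$ for every $\zeta$ with $\zeta^q = 1$; hence the same holds for $F \= P^{\circ q}$ and for its reduction $G \= \wtf^{\circ q}$. Consequently $G(z) = z\,\gamma(z^q)$ for a power series $\gamma(w) = 1 + \cdots$ over $\Kres$, and the substitution $w = z^q$ conjugates $G$ to the power series $\Phi(w) \= w\,\gamma(w)^q$, in the sense that $G^{\circ k}(z)^q = \Phi^{\circ k}(z^q)$ for all $k \ge 1$. Writing $G^{\circ k}(z) = z\,\gamma_k(z^q)$ gives $\gamma_k^q = \Phi^{\circ k}(w)/w$, and since $p \nmid q$ the integer $q$ is invertible in $\Kres$, so $\ord_w(\gamma_k - 1) = \ord_w\bigl(\Phi^{\circ k}(w) - w\bigr) - 1$. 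Therefore
$$\ord_z\bigl(G^{\circ k}(z) - z\bigr) = 1 + q\Bigl(\ord_w\bigl(\Phi^{\circ k}(w) - w\bigr) - 1\Bigr);$$
in particular, if $i_1 < i_2 < \cdots$ are the lower ramification numbers of $\Phi$, then $\ord_z\bigl(G^{\circ p^{m - 1}}(z) - z\bigr) = 1 + q\,i_m$ for every $m \ge 1$. This reduces the construction to the case $q = 1$: it suffices to choose $A$ and $B$ so that $\Phi$ is \emph{minimally ramified}, i.e.\ so that each $i_m$ attains the smallest value compatible with the congruences of Sen's theorem $i_{m+1} \equiv i_m \pmod{p^m}$.

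\emph{Step 2: realizing a minimally ramified $\Phi$ — the main obstacle.} The coefficients of $\Phi$ of degrees $2$ and $3$ are explicit functions of $\tlambda,\tA,\tB$ (all higher coefficients of $P$ being zero), and by the characterization of minimally ramified power series in terms of the iterative residue (Theorem~\ref{t:characterization of minimally ramified}), together with the extension of Sen's theorem (Theorem~\ref{t:higher order sen}) supplying the congruences above, whether $\Phi$ is minimally ramified depends only on the value of $\resit(\Phi)$, which is an explicit function of $\tlambda,\tA,\tB$. When $p$ is odd one takes $\tA \neq 0$ (so that $i_1(\Phi) = 1$), and a direct computation yields
$$\resit(\Phi) = \Bigl(1 - \tfrac{(q-1)(2q+1)}{2q^{2}}\Bigr) - \frac{\tlambda\,\tB}{q^{2}\,\tA^{2}},$$
which is a nonconstant affine function of $\tB/\tA^{2}$; hence $\resit(\Phi)$ can be prescribed at will, in particular set to a value for which $\Phi$ is minimally ramified. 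In many cases one can already take $B = 0$: for $q = 1$ and $p$ odd one has $\resit(\Phi) = 1$ for every $A$, a minimally ramified value, and $P(z) = \lambda z + z^{2}$ is the quadratic polynomial~\eqref{e:quadratic}. When $p = 2$ the iterative-residue computation degenerates; here one instead takes $A = 0$ and $B$ a unit, so that $P(z) = \lambda z + B z^{2q+1}$ and $\Phi$ is tangent to the identity to order $3$, and one checks directly — as for $\Phi(w) = w + w^{3}$ over a field of residue characteristic $2$ — that $\Phi$ is minimally ramified. The genuinely hard point is to control \emph{all} the ramification numbers simultaneously, not merely $i_{1}$ and $i_{2}$, and this is exactly what the iterative residue characterization accomplishes.

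\emph{Step 3: from minimal ramification to optimal cycles.} Fix $n \ge 1$ and set $h_n(z) \= P^{\circ q p^{n}}(z) - z \in \OK[z]$, so $h_n(0) = 0$ and $h_n(z)/z = (\lambda^{q p^{n}} - 1) + \cdots$. Its reduction equals $G^{\circ p^{n}}(z) - z$, which vanishes at the origin to order $1 + q\,i_{n+1}$; as $\Phi$ is minimally ramified, $i_{n+1} - i_{n} = p^{n}$, whence $q\,i_{n+1} - q\,i_{n} = q p^{n}$. Since the coefficients of $h_n(z)/z$ of degree $< q\,i_{n+1}$ lie in $\mK$ while that of degree $q\,i_{n+1}$ is a unit, the Newton polygon of $h_n(z)/z$ shows it has exactly $q\,i_{n+1}$ roots in $\mK$ (with multiplicity), whose absolute values have product $|\lambda^{q p^{n}} - 1|$. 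As $F^{\circ p^{n-1}} - \id$ divides $F^{\circ p^{n}} - \id$, the polynomial $h_{n-1}$ divides $h_n$, so exactly $q\,i_{n}$ of these roots are periodic points of $f$ of period dividing $q p^{n-1}$, contributing $|\lambda^{q p^{n-1}} - 1|$ to the product; by Lemma~\ref{l:admissible periods} the remaining $q p^{n}$ roots have minimal period exactly $q p^{n}$, with product of absolute values $\bigl|(\lambda^{q p^{n}} - 1)/(\lambda^{q p^{n-1}} - 1)\bigr|$. These $q p^{n}$ points are permuted by $f$ into cycles of length $q p^{n}$ along which the multiplicity is constant, hence they form a single cycle of $q p^{n}$ simple points; and since $f$ maps $\mK$ isometrically to itself and fixes the origin, all of them have the same absolute value, necessarily $\bigl|(\lambda^{q p^{n}} - 1)/(\lambda^{q p^{n-1}} - 1)\bigr|^{1/(q p^{n})}$. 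This is equality in~\eqref{e:periodic bound}, so $P$ has an optimal cycle of period $q p^{n}$ for every $n \ge 1$; in effect this last step is the easy direction of Theorem~\ref{t:optimality and minimality}. The transcendence hypothesis when $\operatorname{char} K = 0$ — replaced by a transcendental choice of one coefficient of $P$ in positive characteristic — is what guarantees that the polynomials $P^{\circ q p^{n}}(z) - z$ have simple roots in $\mK$, so that the above count indeed yields a single cycle of simple points rather than a cycle carrying multiplicity.
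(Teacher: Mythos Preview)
Your overall strategy is the paper's: pick $P$ of the form $\lambda z(1 + A'z^q + B'z^{2q})$, arrange that the reduction has the smallest possible ramification increments $i_n(\tP^q) - i_{n-1}(\tP^q) = qp^n$, and then read off the optimal cycle via the counting argument in Step~3 (which is Proposition~\ref{p:detailed optimality criterion}). For $p$ odd your choice of $(\tA,\tB)$ via the iterative residue is exactly Theorem~\ref{t:characterization of minimally ramified} together with Proposition~\ref{p:odd optimality}.

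There is, however, a genuine gap when $p = 2$ and $K$ has characteristic~$2$. You take $A = 0$ and $B$ a unit, so $P(z) = \lambda z + Bz^{2q+1}$. A unit scaling conjugates this inside~$\mK$ to $\lambda z(1 + z^{2q}) = \lambda z(1 + z^q)^2$, and Proposition~\ref{p:all minimal} (with $S(z) = 1 + z^q$; see Remark~\ref{r:non-example appendix}) shows that this polynomial has \emph{no} periodic point in $\mK\setminus\{0\}$ of minimal period greater than~$q$: every fixed point of $P^q$ in $\mK\setminus\{0\}$ already occurs with even multiplicity, and these multiplicities absorb all the zeros of $P^{q\cdot 2^n}(z) - z$ in~$\mK$. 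So your final sentence---that a transcendental choice of one coefficient forces $P^{qp^n}(z) - z$ to have simple roots---is simply false here, for every choice of the unit~$B$. (Relatedly, your $\Phi$ has $i_0(\Phi) = 2$, so it is \emph{almost} minimally ramified in the sense of Definition~\ref{d:almost minimally ramified}, not minimally ramified; the increments $i_n - i_{n-1} = 2^n$ still hold, which is why the Weierstrass-degree count in Step~3 comes out right, but the cycle count collapses because of this multiplicity concentration.)

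The paper repairs this by perturbing: in characteristic~$2$ one takes $Q(z) = \lambda z(1 + \mu z^q + z^{2q})$ with $\mu \in \mK$ algebraically independent of~$\lambda$ over the prime field (Proposition~\ref{p:even periodic optimality}). The reduction is the same, so $\tQ$ is still almost minimally ramified, but the extra parameter~$\mu$ is precisely what rules out parabolic periodic points. Even in the cases where your construction is sound, the step from ``$\lambda$ is transcendental'' to ``$P^{qp^n}(z) - z$ has only simple roots in~$\mK$'' is not a formality: in the paper it is Lemma~\ref{l:concrete independence}, whose proof in positive characteristic (notably Case~2.3) is specific to the polynomials $\lambda z(1+z^q)$ and $\lambda z(1+z^q+z^{2q})$ and uses a singular-metric argument.
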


We use some explicit polynomials to prove this theorem, see Propositions~\ref{p:odd optimality} and~\ref{p:even periodic optimality} in~\S\ref{ss:concrete optimality} for details.
For example, in the case~$p$ is odd and~$q = 1$, we prove that the polynomial~$\lambda z + z^2$ satisfies the conclusions of Theorem~\ref{t:periodic optimality}, in agreement with the situation in the complex setting, see~\cite[\S$6.6$]{Yoc95b}.
However, not every quadratic polynomial has this property: If~$p = 11$ and~$\tlambda = -1$, then there is no integer~$n \ge 1$ for which the quadratic polynomial~$\lambda z + z^2$ has an optimal cycle of period~$2p^n$, see~\S\ref{ss:quadratic optimal cycles}.

Suppose~$K$ is of characteristic~$p$, and let~$P$ be a polynomial satisfying the conclusions of Theorem~\ref{t:periodic optimality}.
Then all the periodic points of~$P$ of minimal period at least~$qp$ are in fact concentrated in the sphere
$$ \left\{ z \in K : |z| = |\lambda^q - 1|^{\frac{p - 1}{qp}} \right\}.\footnote{As pointed out in~\S\ref{ss:periodic bound intro}, such a concentration of periodic points cannot occur in the case where the characteristic of~$K$ is zero.}$$
It is not clear to us how the periodic points are distributed in this sphere.
For concreteness, we propose the following problem.
\begin{prob}
Let~$p$ be an odd prime number, $(K, |\cdot|)$ an algebraically closed and complete ultrametric field of characteristic~$p$, and~$\lambda$ in~$K$ such that~$0 < |\lambda - 1| < 1$.
Moreover, for each integer~$n \ge 1$ let~$\Pi_n$ be the set of
cardinality~$p^n$ of all
periodic points of~$P_{\lambda}(z) = \lambda z + z^2$ in~$\mK$ of minimal period~$p^n$.
Is the sequence of measures
$$ \left\{ \frac{1}{p^n} \sum_{z \in \Pi_n} \delta_z \right\}_{n = 1}^{+ \infty}$$
convergent?
\end{prob}
It would be natural to consider the measures above as measures on the
Berkovich projective line of~$K$, since they would accumulate on at
least one measure with respect to the corresponding weak* topology, see~\cite{Ber90}.
Furthermore, every accumulation measure would be invariant by the
action of~$P_{\lambda}$ on the Berkovich projective line of~$K$.

\subsection{Minimally ramified power series}
\label{ss:minimally ramified intro}
One of the main ingredients in the proof of Theorem~\ref{t:periodic optimality} is (an extension of) the concept of ``minimally ramified'' power series introduced by Laubie, Movahhedi, and Salinier in~\cite{LauMovSal02}, in their study of Lubin's conjecture in~\cite{Lub94}.
To introduce this concept, let~$p$ be a prime number, and~$k$ a field of characteristic~$p$.
Denote by~$\ord(\cdot)$ the valuation on~$k[[\zeta]]$ defined for a nonzero power series as the lowest degree of its nonzero terms, and for the zero power series~$0$ by~$\ord(0) = + \infty$.
For a power series of the form~$g(\zeta) = \zeta + \cdots$ in~$k[[\zeta]]$, define for each integer~$n \ge 0$ the number
$$ i_n(g) \= \ord \left( \frac{g^{p^n}(\zeta) - \zeta}{\zeta} \right). $$
As observed in~\cite{LauMovSal02}, the results of Sen in~\cite{Sen69} imply that for every integer~$n \ge 0$ we have~$i_n(g) \ge \frac{p^{n + 1} - 1}{p - 1}$; following~\cite{LauMovSal02}, the power series~$g$ is called \emph{minimally ramified} if the equality holds for every~$n$.\footnote{We note that in the case~$p = 2$, a minimally ramified power series in the sense of~\cite{LauMovSal02} is what we call here an ``almost minimally ramified'' power series, see~\S\ref{ss:almost minimally ramified} for precisions.}

To prove Theorem~\ref{t:periodic optimality}, we need to deal with a more general class of power series, allowing~$g'(0)$ to be an arbitrary root of unity.
For this, we prove a higher order version of the main theorem of Sen in~\cite{Sen69}, see Theorem~\ref{t:higher order sen} in~\S\ref{ss:higher order sen}.
We use it to show that for every integer~$q \ge 1$ not divisible by~$p$, every root of unity~$\gamma$ in~$k$ of order~$q$, and every power series of the form
$$ g(\zeta) = \gamma \zeta + \cdots
\text{ in } k[[\zeta]], $$
we have for every integer~$n \ge 0$
\begin{equation}
  \label{e:minimally ramified at n intro}
i_n(g^q) \ge q \frac{p^{n + 1} - 1}{p - 1},  
\end{equation}
see Proposition~\ref{p:minimally ramified} in~\S\ref{ss:minimally ramified}.
We say that~$g$ is \emph{minimally ramified} if equality holds for
every~$n$, see Definition~\ref{ss:minimally ramified}.
We give a characterization of minimally ramified power series in terms
of the iterative residue, see Theorem~\ref{t:characterization of
  minimally ramified} in~\S\ref{s:characterization of minimally ramified}.

The following links the existence of optimal cycles to minimally ramified maps.
To simplify the exposition we have restricted to ground fields of odd residue characteristic.
An analogous statement holds for ground fields of residue characteristic~$2$, see Theorem~\ref{t:optimality and minimality}' in~\S\ref{ss:generic optimality}.

\begin{theoalph}
\label{t:optimality and minimality}
Let~$p$ be an odd prime number, $(K, |\cdot|)$ an algebraically closed field of residue characteristic~$p$, and~$q \ge 1$ an integer that is not divisible by~$p$.
Then the following properties hold.
\begin{enumerate}
\item[1.]
Let~$\lambda$ in~$K$ be such that~$|\lambda| = 1$ and such that the order of~$\tlambda$ in~$\tK^*$ is~$q$.
Moreover, let $n \ge 1$ be an integer and~$P(z) = \lambda z + \cdots $ a polynomial in~$\OK[z]$ having an optimal cycle of period~$qp^n$.
Then this is the only cycle of minimal period~$qp^n$ of~$P$, and the reduction of~$P$ is minimally ramified.
\item[2.]
Let~$\gamma$ be a root of unity in~$\tK$ of order~$q$, and~$g(\zeta) = \gamma \zeta + \cdots$ a polynomial in~$\tK[\zeta]$ that is minimally ramified.
Given an integer~$d \ge \max \{ \deg(g), p \}$, let~$a_1$, \ldots, $a_d$ in~$\OK$ be algebraically independent over the prime field of~$K$, and such that the reduction of the polynomial~$P(z) \= a_1 z + \cdots + a_d z^d$ is~$g$.
Then for every integer~$n \ge 1$, the polynomial~$P$ has a unique cycle of minimal period~$qp^n$, and this cycle is an optimal cycle of period~$qp^n$ of~$P$.
\end{enumerate}
\end{theoalph}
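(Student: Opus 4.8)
The plan is to set up a dictionary, by means of Newton polygons, between the periodic points of~$P$ in~$\mK$ and the lower ramification numbers~$i_m(g^q)$ of the reduction~$g$ of~$P$, and then to exploit the rigidity of these numbers supplied by Sen's theorem, by its higher order refinement (Theorem~\ref{t:higher order sen}), and by the characterization of minimal ramification through the iterative residue (Theorem~\ref{t:characterization of minimally ramified}). For the dictionary, write~$v$ for the valuation attached to~$|\cdot|$, let~$g$ be the reduction of~$P$, and for~$n \ge 0$ put~$h_n(z) \= P^{qp^n}(z) - z$ and~$\delta_n \= v(\lambda^{qp^n} - 1)$. Since~$q \mid qp^n$ and~$\tlambda$ has order~$q$ we have~$\delta_n > 0$; and for~$n \ge 1$, writing~$\lambda^{qp^n} - 1$ as the product of~$\lambda^{qp^{n-1}} - \zeta$ over the~$p$-th roots of unity~$\zeta$ of~$K$ --- one factor being~$\lambda^{qp^{n-1}} - 1$ and the rest of positive valuation --- gives~$\delta_n > \delta_{n-1}$. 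The nonzero zeros of~$h_n$ in~$\mK$ are exactly the periodic points of~$P$ in~$\mK \setminus \{0\}$ of minimal period dividing~$qp^n$, hence by Lemma~\ref{l:admissible periods} of minimal period~$qp^j$ for some~$0 \le j \le n$, each such cycle consisting of~$qp^j$ points of equal valuation because~$P$ is an isometry of~$\mK$. As reduction commutes with composition of power series without constant term, the reduction of~$h_n$ is~$g^{qp^n}(\zeta) - \zeta$, so~$\ord(g^{qp^n}(\zeta) - \zeta) = i_n(g^q) + 1$; reading off the Newton polygon of~$h_n$ then shows that the nonzero zeros of~$h_n$ in~$\mK$, counted with multiplicity, number exactly~$i_n(g^q)$ and that the sum of their valuations, weighted by multiplicity, equals~$\delta_n$.

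\emph{Part~1.} Let~$C_0$ be an optimal cycle of period~$qp^n$ of~$P$ and~$z_0 \in C_0$, so~$v(z_0) = (\delta_n - \delta_{n-1})/(qp^n)$. In the proof of Lemma~\ref{l:periodic bound}, the bound~\eqref{e:periodic bound} follows from the facts that every zero of~$h_{n-1}$ in~$\mK$ is a zero of~$h_n$ of at least the same multiplicity and that the~$qp^n$ points of~$C_0$ are further zeros of~$h_n$, each of valuation~$v(z_0)$. The equality case therefore forces that~$C_0$ is the only cycle of minimal period~$qp^n$ of~$P$ (the first assertion), that its points are simple zeros of~$h_n$, and that every zero of~$h_{n-1}$ in~$\mK$ retains its multiplicity as a zero of~$h_n$; by the dictionary, this says~$i_n(g^q) - i_{n-1}(g^q) = qp^n$. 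On the other hand, the inductive proof of Proposition~\ref{p:minimally ramified}, resting on Sen's theorem and Theorem~\ref{t:higher order sen}, yields the recursive estimate~$i_m(g^q) \ge p\, i_{m-1}(g^q) + q$ for~$m \ge 1$, together with~$i_0(g^q) \ge q$; feeding in~$i_n(g^q) - i_{n-1}(g^q) = qp^n$ forces~$i_m(g^q) = q\tfrac{p^{m+1}-1}{p-1}$ for every~$0 \le m \le n$, and in particular for~$m = 0$ and~$m = 1$ since~$n \ge 1$. Finally, Theorem~\ref{t:characterization of minimally ramified} shows that minimal ramification of~$g$ is detected by this bounded amount of data on~$g^q$, whence~$g$ is minimally ramified.

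\emph{Part~2.} Let~$g$ be minimally ramified and~$P(z) = a_1 z + \cdots + a_d z^d$ a lift as in the statement. Because~$a_1 = \lambda$ is transcendental over the prime field, it is not a root of unity, so~$\lambda^{qp^n} \ne 1$ and~$\delta_n < \infty$ for all~$n$; minimal ramification gives~$i_n(g^q) = q\tfrac{p^{n+1}-1}{p-1}$, hence by the dictionary~$h_n$ has exactly~$q\tfrac{p^{n+1}-1}{p-1}$ nonzero zeros in~$\mK$ counted with multiplicity, of total valuation~$\delta_n$. The algebraic independence of~$a_1, \dots, a_d$ is used to show that no periodic point of~$P$ in~$\mK$ has multiplier a root of unity, so that every zero of every~$h_m$ in~$\mK$ is simple and multiplicities do not grow with~$m$. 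Consequently the zeros of~$h_n$ in~$\mK$ that are not zeros of~$h_{n-1}$ are exactly~$i_n(g^q) - i_{n-1}(g^q) = qp^n$ in number, have minimal period exactly~$qp^n$ by Lemma~\ref{l:admissible periods}, and so form a single cycle, which is the unique cycle of minimal period~$qp^n$ of~$P$. Since the zeros of~$h_{n-1}$ retain their multiplicities, the valuations over this cycle sum to~$\delta_n - \delta_{n-1}$, so each of its~$qp^n$ points has valuation~$(\delta_n - \delta_{n-1})/(qp^n)$; that is, equality holds in~\eqref{e:periodic bound} and the cycle is optimal. The hypotheses on~$d$ guarantee that such a lift exists.

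\emph{Main obstacle.} I expect the crux to be the passage between the geometric side (periodic points, Newton polygons) and the arithmetic side (lower ramification numbers): in Part~1, promoting the existence of a single optimal cycle --- a finite amount of information --- to minimal ramification at all levels, which hinges on the recursive lower bound coming from Sen's theorem and Theorem~\ref{t:higher order sen} and on the iterative residue characterization of Theorem~\ref{t:characterization of minimally ramified}; and in Part~2, using the genericity of the~$a_i$ to rule out inflated multiplicities so that each level genuinely contributes a new, optimal cycle. By comparison, the Newton-polygon bookkeeping and the equality analysis of Lemma~\ref{l:periodic bound} should be comparatively routine.
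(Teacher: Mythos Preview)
Your plan matches the paper's: the equality case of Lemma~\ref{l:periodic bound} yields $i_n(g^q) - i_{n-1}(g^q) = qp^n$, from which one deduces minimal ramification (Part~1); and minimal ramification plus the absence of parabolic cycles (Lemma~\ref{l:generic independence}, which is where $d \ge p$ is used) feeds Proposition~\ref{p:detailed optimality criterion} to produce the unique optimal cycle (Part~2). Your Newton-polygon dictionary is an equivalent repackaging of the paper's Weierstrass-degree bookkeeping.

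There is, however, a genuine gap in Part~1. The recursion $i_m(g^q) \ge p\, i_{m-1}(g^q) + q$ that you invoke is \emph{not} unconditional: it is part~1 of Lemma~\ref{l:keating} and requires $p \nmid i_{m-1}(g^q)$; when $p \mid i_{m-1}(g^q)$, part~2 of the same lemma gives instead $i_m(g^q) = p\, i_{m-1}(g^q)$. Neither the proof of Proposition~\ref{p:minimally ramified} nor Theorem~\ref{t:higher order sen} supplies this recursion unconditionally (the forward direction of Proposition~\ref{p:minimally ramified} uses only the weaker $i_m \ge i_{m-1} + qp^m$). The paper closes this in Corollary~\ref{c:minimally ramified increment}: from $i_n - i_{n-1} = qp^n$ one first rules out $p \mid i_{n-1}(g^q)$, since otherwise $qp^n = (p-1)\, i_{n-1}(g^q)$ with $q \mid i_{n-1}(g^q)$ would force $(p-1) \mid p^n$, impossible for odd~$p$; only then does part~1 of Lemma~\ref{l:keating} give $i_n(g^q) \le q(p^{n+1}-1)/(p-1)$, and Proposition~\ref{p:minimally ramified} finishes directly. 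Two smaller points: once you have equality in~\eqref{e:minimally ramified at n} for your given $n \ge 1$, Proposition~\ref{p:minimally ramified} (or Lemma~\ref{l:laubie saine}) already gives minimal ramification, so Theorem~\ref{t:characterization of minimally ramified} is not the right citation; and your dictionary presupposes $i_n(g^q) < \infty$, i.e., that the reduction~$g$ is non-linear, which the paper handles by a rescaling argument in the proof of Corollary~\ref{c:optimality implies minimality}.
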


See Proposition~\ref{p:detailed optimality criterion} for a related result that holds under a weaker form of the genericity condition in part~$2$.
This genericity condition is necessary to prevent the concentration of periodic points, as it occurs, for example, for the polynomials studied in Appendix~\ref{s:all minimal}.

\subsection{Optimal cycles of quadratic polynomials}
\label{ss:quadratic optimal cycles}
The following is a more precise version of Theorem~\ref{t:optimality and minimality} for quadratic polynomials.
\begin{theoalph}
\label{t:quadratic optimality and minimality}
Let~$p$ be an odd prime number, and~$(K, |\cdot|)$ an algebraically closed ultrametric field of residue characteristic~$p$.
Moreover, let~$\lambda$ in~$K$ be such that~$|\lambda| = 1$, such that the order~$q$ of~$\tlambda$ in~$\tK^*$ is finite, and such that~$\lambda^q \neq 1$.
If~$K$ is of characteristic zero, assume in addition that~$\lambda$ is transcendental over the prime field of~$K$.
Then the following dichotomy holds for the polynomial~$P_{\lambda}(z) \= \lambda z + z^2$:
\begin{enumerate}
\item[1.]
If the reduction of~$P_{\lambda}$ is minimally ramified, then for each integer~$n \ge 1$ the polynomial~$P_{\lambda}$ has a unique cycle of minimal period~$qp^n$, and this cycle is an optimal cycle of period~$qp^n$ of~$P_{\lambda}$;
\item[2.]
If the reduction of~$P_{\lambda}$ is not minimally ramified, then there is no integer~$n \ge 1$ for which the polynomial~$P_{\lambda}$ has an optimal cycle of period~$qp^n$.
\end{enumerate}
\end{theoalph}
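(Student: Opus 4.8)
The plan is to deduce both statements from the general correspondence between optimal cycles and minimally ramified reductions: part~$2$ will come from part~$1$ of Theorem~\ref{t:optimality and minimality} by contraposition, and part~$1$ from Proposition~\ref{p:detailed optimality criterion}. One cannot simply quote part~$2$ of Theorem~\ref{t:optimality and minimality} for part~$1$, because $\deg P_\lambda = 2 < p$ and a quadratic polynomial does not have enough coefficients to satisfy the ``algebraically independent over the prime field'' genericity hypothesis there; Proposition~\ref{p:detailed optimality criterion} is precisely the version tailored to a weaker genericity hypothesis, and the crux will be to check that $P_\lambda$ satisfies it.

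Two preliminary observations make the cited results applicable with $P = P_\lambda$. First, since $\tK$ is a field of characteristic~$p$ its multiplicative group has no element of order~$p$, so the order~$q$ of~$\tlambda$ is not divisible by~$p$. Second, under the hypotheses $\lambda$ is transcendental over the prime field~$k_0$ of~$K$: when $K$ has characteristic zero this is an assumption, and when $K$ has characteristic~$p$ it follows from $\lambda^q \neq 1$, since an element of $K$ of absolute value~$1$ that is algebraic over~$\F_p$ is a root of unity of order prime to~$p$ whose reduction has the same order, which would force $\lambda^q = 1$. Granting these, part~$2$ is immediate: if the reduction of $P_\lambda$ were not minimally ramified and yet $P_\lambda$ had an optimal cycle of period~$qp^n$ for some integer $n \ge 1$, then part~$1$ of Theorem~\ref{t:optimality and minimality} applied to $P = P_\lambda$ would force that reduction to be minimally ramified, a contradiction.

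For part~$1$, assume the reduction $g(\zeta) = \tlambda\zeta + \zeta^2$ of~$P_\lambda$ is minimally ramified; note $g'(0) = \tlambda$ has order~$q$. The polynomial $P_\lambda(z) = \lambda z + z^2$ lies in $\OK[z]$, reduces to~$g$, and has exactly one coefficient not pinned down by~$g$, namely $\lambda = P_\lambda'(0)$, which is transcendental over~$k_0$ by the observation above. I would then invoke Proposition~\ref{p:detailed optimality criterion}: the task is to verify that $P_\lambda$ meets the weaker genericity condition appearing there, which should hold precisely because its unique free coefficient~$\lambda$ is transcendental over the prime field and so avoids the finitely many algebraic obstructions that condition rules out. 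Granting the verification, Proposition~\ref{p:detailed optimality criterion} produces, for each integer $n \ge 1$, an optimal cycle of period~$qp^n$ of~$P_\lambda$, and then part~$1$ of Theorem~\ref{t:optimality and minimality}, applied once more to $P = P_\lambda$, shows that this cycle is the unique cycle of minimal period~$qp^n$ of~$P_\lambda$.

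The main obstacle I anticipate is exactly this last verification. It requires unwinding the precise form of the weaker genericity condition in Proposition~\ref{p:detailed optimality criterion} and confirming that the minimal ramification of $g = \tlambda\zeta + \zeta^2$, together with the transcendence of the single free coefficient~$\lambda$, genuinely places~$P_\lambda$ within its scope, with no hidden constraint on~$\tlambda$ beyond the one already ensuring that $g$ is minimally ramified; if $P_\lambda$ fails to fit that statement verbatim, a short supplementary argument using the explicit shape of $\lambda z + z^2$ should close the gap. As a consistency check, when $q = 1$ one has $\tlambda = 1$ and $g(\zeta) = \zeta + \zeta^2$, and part~$1$ then recovers the optimality of $P_\lambda(z) = \lambda z + z^2$ familiar from the complex setting, cf.~\cite[\S6.6]{Yoc95b}.
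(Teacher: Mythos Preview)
Your plan is correct and matches the paper's approach, but your description of Proposition~\ref{p:detailed optimality criterion} is inaccurate, and that inaccuracy hides exactly the work you still have to do.

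For part~$2$, what you describe is precisely Corollary~\ref{c:optimality implies minimality}, which the paper cites directly; your treatment is fine. For part~$1$, the paper also invokes part~$2$ of Proposition~\ref{p:detailed optimality criterion}, but that proposition does \emph{not} carry a ``genericity condition on the coefficients.'' Its two hypotheses are concrete:
\[
\wideg\!\left(\frac{P_\lambda^{qp^n}(z)-z}{P_\lambda^{qp^{n-1}}(z)-z}\right)=qp^n,
\qquad
(P_\lambda^{qp^n})'(z_0)\neq 1\ \text{for every periodic }z_0\text{ of period }qp^{n-1}.
\]
The first follows immediately from the assumption that the reduction~$g$ is minimally ramified, since the left side equals $i_n(g^q)-i_{n-1}(g^q)$; you do not mention this, but it is the easy half. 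The second is equivalent to ``$P_\lambda$ has no parabolic periodic point of the relevant periods,'' and is the real content. The paper supplies this via Lemma~\ref{l:concrete independence} with~$q=1$: for~$\lambda$ transcendental over the prime field, $\lambda z(1+z)$ has no parabolic periodic point at all, and this transfers to $P_\lambda(z)=\lambda z+z^2$ since the two are conjugate by the scaling $z\mapsto\lambda z$. That lemma is not a formality; its proof is a resultant argument specializing~$\lambda$ (to~$0$ in characteristic zero, to a value with~$|\lambda|>1$ in positive characteristic) to exhibit a polynomial with only repelling cycles. So your ``anticipated obstacle'' is exactly right, and Lemma~\ref{l:concrete independence} is the missing ingredient you need to name.

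Two minor points. First, Proposition~\ref{p:detailed optimality criterion} already yields uniqueness of the cycle of minimal period~$qp^n$, so re-invoking part~$1$ of Theorem~\ref{t:optimality and minimality} at the end is unnecessary. Second, your observation that in characteristic~$p$ the hypothesis $\lambda^q\neq 1$ forces~$\lambda$ to be transcendental over~$\F_p$ is correct and is exactly how the paper uses it (cf.\ Remark~\ref{r:concrete optimality postive characteristic}).
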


The first alternative of the theorem always holds when~$q = 1$,
because the polynomial~$\zeta + \zeta^2$ in~$\tK[\zeta]$ is minimally
ramified, see~\cite[\emph{Exemple}~$3.19$]{Rivthese} or
Proposition~\ref{p:concrete minimally ramified} in~\S\ref{ss:proof of characterization of minimally ramified}.
For an example where the second alternative holds, suppose~$p = 11$ and consider the quadratic polynomial
$$ g_0(\zeta) \= - \zeta + \zeta^2 \text{ in } \tK[\zeta]; $$
a direct computation shows that~$i_0(g_0^2) = 2$ and~$i_1(g_0^2) > 24$, so~$g_0$ is not minimally ramified.
So, when~$p = 11$ and~$\tlambda = -1$, the second alternative of Theorem~\ref{t:quadratic optimality and minimality} holds.

\begin{prob}
Let~$p$ be an odd prime number, $\F_p$ a field of~$p$ elements, and~$\overline{\F}_p$ an algebraic closure of~$\F_p$.
Determine all those~$\gamma$ in~$\overline{\F}_p^*$ for which the quadratic polynomial~$\gamma \zeta + \zeta^2$ in~$\overline{\F}_p[\zeta]$ is minimally ramified.
\end{prob}

To prove that for~$\gamma$ in~$\overline{\F}_p^*$ the polynomial~$\gamma \zeta + \zeta^2$ is minimally ramified, it is enough to show that~\eqref{e:minimally ramified at n intro} holds with equality with~$g(\zeta) = \gamma \zeta + \zeta^2$ and~$n = 1$, see Proposition~\ref{p:minimally ramified} in~\S\ref{ss:minimally ramified}.

We note that for~$\gamma$ in~$\overline{\F}_p$, the property of~$\gamma \zeta + \zeta^2$ being minimally ramified does not depend on the order of~$\gamma$ alone.
For example, when~$p = 7$, the orders of~$2$ and~$4$ in~$\overline{\F}_7^*$ are both equal to~$3$, but~$2 \zeta + \zeta^2$ is not minimally ramified, and~$4 \zeta + \zeta^2$ is.
\subsection{Organization}
\label{ss:organization}
In~\S\ref{s:periodic bound} we give general properties of normalized power series.
After some preliminaries in~\S\ref{ss:preliminaries}, in~\S\ref{ss:admissible periods} we describe the minimal periods of cycles of a normalized power series (Lemma~\ref{l:admissible periods}).
In~\S\ref{ss:periodic bound} we prove a more general version of the Periodic Points Lower Bound (Lemma~\ref{l:periodic bound}).

In~\S\ref{s:minimally ramified} we introduce and study minimally ramified power series.
We start by proving a higher order version of Sen's theorem in~\S\ref{ss:higher order sen}.
In~\S\ref{ss:minimally ramified} we use this result to define and characterize minimally ramified power series.
In~\S\ref{ss:almost minimally ramified} we study a variant of this concept for fields of characteristic~$2$.
In our characterization of minimally ramified power series we use an extension of Laubie and Sa{\"{\i}}ne~\cite{LauSai98} of a result of Keating~\cite{Kea92}.

In~\S\ref{s:characterization of minimally ramified} we characterize,
for each integer~$q$ not divisible by~$p$ and each root of
unity~$\gamma$ of order~$q$, those power series of the form $g(\zeta)
= \gamma \zeta + \cdots$ that are minimally ramified in terms of the
iterative residue of~$g$ (Theorem~\ref{t:characterization of minimally ramified}).
A direct consequence is that there is a minimally ramified polynomial~$g$ as above of degree~$q + 1$ or~$2q + 1$.

In~\S\ref{s:optimality} we prove Theorems~\ref{t:periodic optimality}, \ref{t:optimality and minimality}, and~\ref{t:quadratic optimality and minimality}.
In~\S\ref{ss:generic optimality} we prove a general version of Theorem~\ref{t:optimality and minimality}, that we state as Theorem~\ref{t:optimality and minimality}'.
In~\S\ref{ss:concrete optimality} we exhibit concrete polynomials that satisfy the conclusions of Theorem~\ref{t:periodic optimality} (see Propositions~\ref{p:odd optimality} and~\ref{p:even periodic optimality}).
In Appendix~\ref{s:all minimal} we study a concentration of periodic points phenomenon showing that a very natural candidate to have optimal cycles in characteristic~$2$ has none.
The proof of Theorem~\ref{t:quadratic optimality and minimality} is given at the end of~\S\ref{ss:concrete optimality}.

\subsection{Acknowledgments}
We thank Tom Tucker for useful conversations regarding the lifting
argument in Lemmas~\ref{l:generic independence zero characteristic}
and~\ref{l:generic independence}.
We also thank Thomas Ward and the referees for various comments that
helped improve the exposition of the paper.
Part of this article was developed while the authors were visiting the Institute for Computational and Experimental Research in Mathematics (ICERM).
We would like to thank this institute for the excellent working conditions provided.

\section{Periodic points lower bound}
\label{s:periodic bound}
The purpose of this section is to prove general facts about periodic points of normalized power series.
After some preliminaries in~\S\ref{ss:preliminaries}, in~\S\ref{ss:admissible periods} we describe the minimal periods of periodic points of normalized power series.
In~\S\ref{ss:periodic bound} we prove a general version of the Periodic Points Lower Bound, stated in~\S\ref{ss:periodic bound intro}, that we state as Lemma~\ref{l:periodic bound}.

\subsection{Preliminaries}
\label{ss:preliminaries}

Given a ring~$R$ and an element~$a$ of~$R$, we denote by~$\left\langle a \right\rangle$ the ideal of~$R$ generated by~$a$.

Given a field~$k$, denote by~$k^* \= k \setminus \{ 0 \}$ the multiplicative subgroup of~$k$.
A nonzero element~$\gamma$ of~$k^*$ has \emph{infinite order in~$k^*$}, if for every integer~$q \ge 1$ we have~$\gamma^q \neq 1$.
If~$\gamma$ is not of infinite order in~$k^*$, then the \emph{order of~$\gamma$ in~$k^*$} is the least integer~$q \ge 1$ such that~$\gamma^q = 1$.
When~$k$ is of positive characteristic, in this last case the order~$\gamma$ is not divisible by the characteristic of~$k$.

Let~$p$ be a prime number, and~$k$ a field of characteristic~$p$.
The \emph{order} of a nonzero power series~$g(\zeta)$ in~$k[[\zeta]]$ is the lowest degree of a nonzero term in~$g(\zeta)$.
The order of the zero power series in~$k[[\zeta]]$ is~$+ \infty$.
For~$g(\zeta)$ in~$k[[\zeta]]$, denote by~$\ord(g)$ the order of~$g$.
The function~$\ord$ so defined is a valuation on~$k[[\zeta]]$.

Let~$(K, | \cdot |)$ be an ultrametric field.
Denote by~$\OK$ the ring of integers of~$K$, by~$\mK$ the maximal ideal of~$\OK$, and by~$\tK \= \OK / \mK$ the residue field of~$K$.
Moreover, denote the projection in~$\tK$ of an element~$a$ of~$\OK$ by~$\widetilde{a}$; it is the \emph{reduction of~$a$}.
The \emph{reduction} of a power series~$f(z)$ in~$\OK[[z]]$ is the power series in~$\tK[[\zeta]]$ whose coefficients are the reductions of the corresponding coefficients of~$f$.

For such a power series~$f(z)$ in~$\OK[[z]]$, the \emph{Weierstrass degree~$\wideg(f)$ of~$f$} is the order in~$\tK[[\zeta]]$ of the reduction~$\wtf(\zeta)$ of~$f(z)$.
When~$K$ is algebraically closed and~$\wideg(f)$ is finite, $\wideg(f)$ is equal to the number
of zeros of~$f$ in~$\mK$, counted with multiplicity; see for example~\cite[\S VI, Theorem~$9.2$]{Lan02}.

Notice that a power series~$f(z)$ in~$\OK[[z]]$ converges in~$\mK$.
If in addition~$|f(0)| < 1$, then by the ultrametric inequality~$f$ maps~$\mK$ to itself.
In this case, a point~$z_0$ in~$\mK$ is \emph{periodic for~$f$}, if there is an integer~$n \ge 1$ such that~$f^n(z_0) = z_0$.
In this case \emph{$z_0$ is of period~$n$}, and~$n$ is a \emph{period of~$z_0$}.
If in addition~$n$ is the least integer with this property, then~$n$ is the \emph{minimal period of~$z_0$} and~$(f^n)'(z_0)$ is the \emph{multiplier of~$z_0$}.
Note that an integer~$n \ge 1$ is a period of~$z_0$ if and only if it is divisible by the minimal period of~$z_0$.
Given a periodic point~$z_0$ of~$f$ of multiplier~$\lambda$, we say~$z_0$ is \emph{attracting} if~$|\lambda| < 1$, \emph{indifferent} if~$|\lambda| = 1$, and \emph{repelling} if~$|\lambda| > 1$.
In the case~$z_0$ is indifferent, $z_0$ is \emph{rationally indifferent} or~\emph{parabolic} if~$\lambda$ is a root of unity, and it is \emph{irrationally indifferent} otherwise.

\subsection{Minimal periods of normalized power series}
\label{ss:admissible periods}
The purpose of this section is to prove the following lemma, where we gather well-known results on periodic points of a normalized power series.

\begin{lemm}
\label{l:admissible periods}
Let~$(K, | \cdot |)$ be an ultrametric field and~$\lambda$ in~$K$ such that~$|\lambda| = 1$.
Then for every power series~$f(z) = \lambda z + \cdots $ in~$\OK[[z]]$, the following properties hold.
\begin{enumerate}
\item[1.]
If~$r \ge 1$ is an integer such that~$|\lambda^r - 1| = 1$, then~$f$ has no periodic point of period~$r$, other than~$z = 0$.
In particular, if~$\tlambda$ has infinite order in~$\tK^*$, then~$f$ has no periodic point other than~$z = 0$.
\item[2.]
Suppose the order~$q$ of~$\tlambda$ in~$\tK^*$ is finite.
\begin{enumerate}
\item
If the residue characteristic of~$K$ is zero, then the minimal period of each periodic point of~$f$ in~$\mK \setminus \{ 0 \}$ is equal to~$q$.
\item 
If the residue characteristic~$p$ of~$K$ is positive, then~$p$ does not divide~$q$ and the minimal period of each periodic point of~$f$ in~$\mK \setminus \{ 0 \}$ is of the form~$q p^n$, for some integer~$n \ge 0$.
\end{enumerate}
\end{enumerate}
\end{lemm}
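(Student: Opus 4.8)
The plan is to reduce everything to the dynamics of the reduction $\wtf(\zeta)$ acting on the residue field, combined with the elementary observation that a normalized power series acts isometrically on $\mK$ and that, consequently, the orbit of a point $z_0 \in \mK \setminus \{0\}$ has constant absolute value, say $|z_0| = \rho \in (0,1)$. First I would treat part~$1$: if $|\lambda^r - 1| = 1$, then writing $g = f^r$ we have $g(z) = \lambda^r z + \cdots$ with $|\lambda^r - 1| = 1$, so $g(z) - z = (\lambda^r - 1)z + \cdots$ has a unit leading coefficient; hence on $\mK$ the estimate $|g(z) - z| = |z|$ holds by the ultrametric inequality, forcing $g(z) = z$ only at $z = 0$. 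The ``in particular'' clause is immediate: if $\tlambda$ has infinite order, then for every $r \ge 1$ the reduction $\tlambda^r \neq 1$, i.e. $|\lambda^r - 1| = 1$, so no point of any period survives.

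For part~$2$, suppose $q$ is the (finite) order of $\tlambda$. That $p \nmid q$ in the positive residue characteristic case is standard: $\tlambda$ lies in the multiplicative group of a field of characteristic $p$, which has no $p$-torsion, so the order of any element is prime to $p$. Now let $z_0 \in \mK \setminus \{0\}$ be periodic of minimal period $m$. Applying part~$1$ with $r = m$, we cannot have $|\lambda^m - 1| = 1$; hence $\tlambda^m = 1$, so $q \mid m$. Write $m = q p^n \cdot s$ with $p \nmid s$ — wait, more carefully: write $m/q = $ (its $p$-part)$\cdot s$ where $p \nmid s$. The goal is to rule out the prime-to-$p$ part $s$ beyond $q$, i.e. to show $s = 1$. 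Here is where the residue-characteristic-zero case is easiest: if $p = 0$ (or $p$ does not exist, i.e. residue characteristic zero) then we must show $m = q$ exactly. One shows that $f^q$ fixes $z_0$: indeed $f^q(z) = \lambda^q z + \cdots$ and, considering the Taylor expansion on the invariant disk $\{|z| \le \rho\}$, $|\lambda^q - 1| < 1$ together with the structure of $f^q - \mathrm{id}$ and the isometry property should force, in residue characteristic zero, that the only periods are multiples of $q$ with no genuine higher periodicity — the cleanest route is to invoke that in residue characteristic zero a normalized germ with $\tlambda$ a root of unity of order $q$ has $f^q$ tangent to the identity, and the zeros of $f^q(z) - z$ other than $0$ account for all periodic points, all of exact period $q$ (this is the well-known ``no wild dynamics'' in residue characteristic zero; cf. the iterative logarithm argument referenced in the introduction).

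For the positive residue characteristic case of part~$2$(b), the key step — and the main obstacle — is showing that the prime-to-$p$ part of $m/q$ is trivial. The strategy: replace $f$ by $h = f^{q}$, so that $\tilde h(\zeta) = \zeta + \cdots$ is tangent to the identity, and $z_0$ is periodic for $h$ of minimal period $p^n s$ with $p \nmid s$. Passing to reductions, $\tilde h$ is a formal power series over $\tK$ tangent to the identity; a foundational fact (going back to Sen, and used freely later in the paper) is that a nontrivial such power series over a field of characteristic $p$ has, apart from $p$-power iterates, no periodicity — concretely, $\tilde h^{s} = \mathrm{id}$ forces $s = 1$ when $p \nmid s$, because the order of a nonidentity element tangent to the identity in $\tK[[\zeta]]$ is a power of $p$ (if $\tilde h$ is the identity the disk-of-identity alternative is separate and does not arise under our periodic-point hypothesis since $z_0 \ne 0$ is isolated). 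Lifting this back: if the minimal period of $z_0$ under $h$ had a prime-to-$p$ factor $\ell > 1$, then the $\ell$ conjugate periodic points would reduce to an $\ell$-cycle of $\tilde h$ in $\tK$, but such a cycle near $0$ is incompatible with $\tilde h$ tangent to the identity having only $p$-power periods — one makes this precise by noting that the periodic points of period $p^n s$ of $h$ in $\{|z| = \rho\}$ group into $s$-cycles detected at the level of the reduction. I expect the careful bookkeeping of how periodic points reduce — ensuring one does not lose track of cycles collapsing under reduction — to be the delicate point, and I would handle it by working with the finite-order factor cleanly: show $\tlambda^m = 1$ gives $q \mid m$, then show $f^m = \mathrm{id}$ on a neighborhood is impossible unless $m/q$ is a $p$-power, using that $\wideg(f^m - \mathrm{id})$ grows only through $p$-power steps, which is exactly the content imported from Sen's theory in the later sections.
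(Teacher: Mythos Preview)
Your treatment of part~$1$ is correct and matches the paper.

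Part~$2$, however, has a genuine gap. Your strategy of ``passing to reductions'' cannot work as stated: every element of~$\mK$ reduces to~$0$ in~$\tK$, so the entire orbit of~$z_0$ reduces to the single fixed point~$0$ of~$\wtf$. There is no ``$\ell$-cycle of~$\tilde h$ in~$\tK$'' to detect; the cycle structure is invisible at the residue level. Your fallback appeals --- to Sen's theory, to the iterative logarithm, to ``no wild dynamics'' --- are either circular (they presuppose exactly the period restriction you are trying to prove), far heavier than what is needed here, or simply restate the claim. In particular, Sen's theorem concerns the growth of the orders~$i_n(g)$ under $p$-power iteration of the \emph{reduction}; it says nothing directly about minimal periods of points in~$\mK$, and you have not supplied the missing link.

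The paper's argument is quite different and entirely elementary. Having shown~$q \mid \ell$, set~$q_0 = q$ in residue characteristic zero, and~$q_0 = qp^n$ where~$p^n$ is the exact $p$-part of~$\ell$ in residue characteristic~$p$; the goal is~$\ell = q_0$. One first proves (as a separate short lemma) that~$f^{q_0}(z) - z$ divides~$f^{\ell}(z) - z$ in~$\OK[[z]]$. Then~$z_0$ is a zero of the quotient~$(f^{\ell}(z) - z)/(f^{q_0}(z) - z)$, and one computes its constant term: if~$\lambda^{q_0} \neq 1$ it equals~$1 + \lambda^{q_0} + \cdots + \lambda^{(m-1)q_0}$ with~$m = \ell/q_0$, and if~$\lambda^{q} = 1$ a short induction on the leading term of~$f^{sq_0}(z) - z$ shows the constant term is~$m$ itself. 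In both cases the reduction of the constant term is~$m \in \tK$, which is a unit since~$m$ is coprime to the residue characteristic by construction of~$q_0$. Hence the quotient has no zeros in~$\mK$, a contradiction. No ramification theory is needed.
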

The proof of this lemma is after the following one.
\begin{lemm}
\label{l:fixed are periodic}
Let~$(K, | \cdot |)$ be a complete ultrametric field and~$g(z)$ a power series in~$\OK[[z]]$ such that~$|g(0)| < 1$.
Then for each integer~$m \ge 1$ the power series~$g(z) - z$ divides~$g^m(z) - z$ in~$\OK[[z]]$.
\end{lemm}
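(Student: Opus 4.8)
The plan is to show first that $g$ maps $\mK$ into itself, and then to exhibit $(g^m(z) - z)/(g(z) - z)$ as an explicit element of $\OK[[z]]$ built out of the partial composites $g, g^2, \dots, g^{m-1}$. Since $g(z)$ has coefficients in $\OK$ and $|g(0)| < 1$, the ultrametric inequality gives $g(\mK) \subseteq \mK$, so each iterate $g^j$ is again a power series in $\OK[[z]]$ with $|g^j(0)| < 1$; in particular the substitutions below make sense in $\OK[[z]]$. The point is the telescoping identity
\begin{equation}
\label{e:telescope}
g^m(z) - z \;=\; \sum_{j=0}^{m-1} \bigl( g^{j+1}(z) - g^{j}(z) \bigr),
\end{equation}
together with the observation that each summand is divisible by $g(z) - z$ in $\OK[[z]]$.

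For the divisibility of a single summand, write $g^{j+1}(z) - g^{j}(z) = g(g^{j-1}(z) \circ \cdots) $ — more precisely, fix $j \ge 1$ and set $w = g^{j-1}(z)$, so that $g^{j}(z) = g(w)$ and $g^{j+1}(z) = g(g(w))$. The key sub-claim is that for any power series $h(w) = \sum_k c_k w^k$ in $\OK[[w]]$, the difference $h(g(w)) - h(w)$ is divisible by $g(w) - w$ in $\OK[[w]]$: indeed $h(g(w)) - h(w) = \sum_k c_k\bigl( g(w)^k - w^k \bigr)$ and each $g(w)^k - w^k = (g(w) - w)\bigl( g(w)^{k-1} + g(w)^{k-2} w + \cdots + w^{k-1} \bigr)$, so $h(g(w)) - h(w) = (g(w) - w) \cdot Q(w)$ with $Q(w) = \sum_k c_k \sum_{i=0}^{k-1} g(w)^{k-1-i} w^{i} \in \OK[[w]]$ (the inner sums are finite and all the composites lie in $\OK[[w]]$, so the coefficient-wise sum converges in the $\langle w \rangle$-adic topology). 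Applying this with $h = g$ gives that $g(g(w)) - g(w)$ is divisible by $g(w) - w$ in $\OK[[w]]$; now substitute $w = g^{j-1}(z)$. This substitution is a continuous $\OK$-algebra endomorphism of $\OK[[z]]$ (legitimate because $g^{j-1}(z) \in \langle z \rangle$ after absorbing the constant term, or more directly because $|g^{j-1}(0)| < 1$ so the substitution is $\langle z\rangle$-adically continuous), and it carries the relation $g(g(w)) - g(w) = (g(w)-w)Q(w)$ to $g^{j+1}(z) - g^{j}(z) = (g^{j}(z) - g^{j-1}(z)) \cdot Q(g^{j-1}(z))$. Iterating, $g^{j+1}(z) - g^{j}(z)$ is divisible by $g(z) - z$ for every $j \ge 0$ (the case $j=0$ being trivial), and summing via \eqref{e:telescope} gives that $g(z) - z$ divides $g^m(z) - z$ in $\OK[[z]]$.

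The only delicate point is bookkeeping about where the composites and the resulting series live: one must check that all the substitutions $w \mapsto g^{j-1}(z)$ are $\langle z \rangle$-adically continuous endomorphisms of $\OK[[z]]$ and that the quotient $Q$ genuinely has integer coefficients and converges. This is where completeness of $K$ is used — to guarantee that the formal composites $g^j$ actually define power series in $\OK[[z]]$ and that the coefficient-wise limits defining $Q$ exist in $\OK$. None of this is hard; it is the routine verification that composition of integral power series with $|g(0)|<1$ behaves well. I expect no real obstacle beyond being careful that "divides in $\OK[[z]]$" (rather than merely in $K[[z]]$, or as formal power series in $z$ over a larger ring) is what comes out, which it does since every $Q$ above is manifestly in $\OK[[z]]$.
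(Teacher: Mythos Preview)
Your proposal is correct and uses the same core idea as the paper: the telescoping decomposition $g^m(z) - z = \sum_{j=0}^{m-1}(g^{j+1}(z) - g^j(z))$ together with the factorization $X^k - Y^k = (X-Y)\sum_{i} X^{k-1-i}Y^i$, plus the observation that $|g(0)|<1$ and completeness of~$K$ make the resulting infinite sums converge coefficient-wise in~$\OK$. The paper organizes this as an induction on~$m$, and in the inductive step writes $g^{m+1}(z) - g^m(z) = g^m(g(z)) - g^m(z)$ and applies the factorization directly to the coefficients of~$g^m$; in your language, this is simply your $h$-lemma with $h = g^j$, which gives $g^{j+1}(z) - g^j(z) = (g(z)-z)\cdot Q_j(z)$ in one step. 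Your route through $h = g$ followed by the substitution $w \mapsto g^{j-1}(z)$ and a chain of divisibilities $g^{j+1}-g^j \mid g^j - g^{j-1} \mid \cdots \mid g - z$ is an unnecessary detour, and the remark that the substitution is ``$\langle z\rangle$-adically continuous'' is not quite right (since $g^{j-1}(0)$ need not vanish); the correct justification is the one you give second, namely $|g^{j-1}(0)|<1$ together with completeness. But none of this affects correctness.
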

\begin{proof}
We proceed by induction in~$m$, the case~$m = 1$ being trivial.
Let~$m \ge 1$ be an integer for which the lemma holds.
Note that it is enough to show that~$g(z) - z$ divides~$g^{m + 1}(z) - g^m(z)$ in~$\OK[[z]]$.
Writing~$g^m(z) = \sum_{n = 0}^{+ \infty} a_n z^n$ and using~$|g(0)| < 1$, we have that~$\sum_{n = 0}^{+ \infty} a_n (g(z)^n - z^n)$ converges to a power series in~$\OK[[z]]$ and that
\begin{equation}
  \label{e:cocycle}
  \sum_{n = 0}^{+ \infty} a_n (g(z)^n - z^n) = g^{m + 1}(z) - g^m(z).
\end{equation}
On the other hand, using again~$|g(0)| < 1$, we have that the series
$$ \sum_{n = 0}^{+ \infty} a_n \sum_{j = 0}^{n - 1} z^j g(z)^{n - 1 - j} $$
converges to a power series~$h(z)$ in~$\OK[[z]]$, and that~$h(z) (g(z) - z)$ is equal to~\eqref{e:cocycle}.
This completes the proof of the lemma.
\end{proof}
\begin{proof}[Proof of Lemma~\ref{l:admissible periods}]
To prove part~$1$, let~$r \ge 1$ be an integer such that~$|\lambda^r - 1| = 1$, and note that the constant term of the power series~$(f^r(z) - z) / z$ is equal to~$\lambda^r - 1$.
Thus, by the ultrametric inequality, for each~$z_0$ in~$\mK \setminus \{ 0 \}$ we have
$$ |f^r(z_0) - z_0|
=
|\lambda^r - 1| \cdot |z_0|
=
|z_0|
\neq 0. $$
Thus~$f^r(z_0) \neq z_0$ and therefore~$z_0$ is not a periodic point of period~$r$ of~$f$.
This proves part~$1$.

To prove part~$2$, suppose first the residue characteristic~$p$ of~$K$ is positive.
We prove first that~$q$ is not divisible by~$p$.
Suppose by contradiction that~$q$ is divisible by~$p$, and put~$m = q / p$.
By the minimality of~$q$ we have~$\tlambda^{m} \neq \widetilde{1}$.
On the other hand, $(\tlambda^{m})^p = \tlambda^{q} = \widetilde{1}$.
Since the characteristic of~$\tK$ is equal to~$p$, this implies that~$\tlambda^m = \widetilde{1}$.
This contradiction proves that~$q$ is not divisible by~$p$.

To complete the proof of part~$2$, we prove simultaneously part~$2$(a) and the second assertion of part~$2$(b).
To do this, let~$\ell \ge 1$ be an integer and~$z_0$ a periodic point of~$f$ in~$\mK \setminus \{ 0 \}$ of minimal period~$\ell$.
By part~$1$ we must have~$|\lambda^\ell - 1| < 1$, or equivalently~$\tlambda^\ell = \widetilde{1}$.
Thus~$q$ divides~$\ell$.
If the residue characteristic of~$K$ is zero, put~$q_0 \= q$.
If the residue characteristic~$p$ of~$K$ is positive, let~$n \ge 0$ be the largest integer such that~$p^n$ divides~$\ell$ and put~$q_0 \= qp^n$.
In both cases we have that~$q_0$ divides~$\ell$.
To complete the proof of part~$2$, it is enough to prove that~$\ell = q_0$.
Suppose by contradiction that~$\ell$ is not equal to~$q_0$, so that~$m \= \ell / q_0 \ge 2$.
Then, by Lemma~\ref{l:fixed are periodic} with~$g = f^{q_0}$, the power series~$f^{q_0}(z) - z$ divides~$f^{\ell}(z) - z$ in~$\OK[[z]]$.
Note that~$z_0$ is a zero of the power series~$(f^{\ell}(z) - z) / (f^{q_0}(z) - z)$.
However, if~$\lambda^{q_0} \neq 1$, then the constant term of this power series is equal to
$$ \frac{\lambda^{\ell} - 1}{\lambda^{q_0} - 1}
=
1 + \lambda^{q_0} + \cdots + \lambda^{(m - 1) q_0}, $$
whose norm equal to~$1$; so the power series~$(f^{\ell}(z) - z) / (f^{q_0}(z) - z)$ does not have zeros in~$\mK$.
We thus obtain a contradiction that completes the proof of part~$2$ when~$\lambda^{q} \neq 1$.
It remains to consider the case where~$\lambda^q = 1$.
In this case the order of~$f^{q_0}(z) - z$ in~$K[[z]]$ is at least~$2$.
If the order of this power series is infinite, then~$f^{q_0}(z) = z$ and therefore every point of~$\mK$ would be periodic of period~$q_0$; but this is not possible because~$z_0$ is periodic of minimal period~$\ell$, and by assumption~$\ell > q_0$.
This proves that the order~$t$ of the power series~$f^{q_0}(z) - z$ is finite and at least~$2$.
If we denote by~$a$ the coefficient of~$z^t$ in~$f^{q_0}(z)$, then a straightforward induction argument shows that for every integer~$s \ge 1$ we have
$$ f^{s q_0}(z) = z + s a z^t + \cdots . $$
When~$s = m$, we obtain
$$ f^{\ell}(z) = z + m a z^t + \cdots . $$
This implies that the constant term of the power series~$(f^{\ell}(z) - z) / (f^{q_0}(z) - z)$ is equal to~$m$, which has norm~$1$.
As before, this implies that this power series has no zeros in~$\mK$, and we obtain a contradiction that completes the proof of the lemma.
\end{proof}

\subsection{Periodic points lower bound}
\label{ss:periodic bound}
The purpose of this section is to give, for a normalized power series with an irrationally indifferent fixed point at~$z = 0$, a lower bound for the norms of periodic points different from~$z = 0$.
The bound depends only on the multiplier of the fixed point~$z = 0$, and of the minimal period of the periodic point.

\begin{lemm}
\label{l:periodic bound}
Let~$p$ be a prime number, and $(K, | \cdot |)$ an ultrametric field of residue characteristic~$p$.
Let~$\lambda$ in~$K$ be such that~$|\lambda| =  1$, and such that the order~$q$ of~$\tlambda$ in~$\tK^*$ is finite.
Then for every power series~$f(z) = \lambda z + \cdots $ in~$\OK[[z]]$, the following properties hold.
\begin{enumerate}
\item[1.]
Suppose~$\lambda^q \neq 1$, and let~$w_0$ be a periodic point of~$f$ of minimal period~$q$.
In the case~$q = 1$, assume~$w_0 \neq 0$.
Then we have
\begin{equation}
\label{e:fixed bound}
|w_0| \ge |\lambda^q - 1|^{\frac{1}{q}},
\end{equation}
with equality if and only if~$\wideg \left( f^q(z) - z \right) = q + 1$.
Moreover, if equality holds, then the cycle containing~$w_0$ is the only cycle of minimal period~$q$ of~$f$ in~$\mK \setminus \{ 0 \}$, and for every point~$w_0'$ in this cycle the inequality above holds with equality with~$w_0$ replaced by~$w_0'$.
\item[2.]
Let~$n \ge 1$ be an integer such that~$\lambda^{q p^n} \neq 1$, and~$z_0$ a periodic point of~$f$ of minimal period~$q p^n$.
Then we have
\begin{equation}
\label{e:periodic bound bis}
|z_0|
\ge
\left| \frac{\lambda^{q p^n} - 1}{\lambda^{q p^{n - 1}} - 1} \right|^{\frac{1}{q p^n}},
\end{equation}
with equality if and only if
\begin{equation}
\label{e:minimal wideg increment}
\wideg \left( \frac{f^{q p^n}(z) - z}{f^{q p^{n - 1}}(z) - z} \right)
=
q p^n.
\end{equation}
Moreover, if equality holds, then the cycle containing~$z_0$ is the only cycle of minimal period~$qp^n$ of~$f$, and for every point~$z_0'$ in this cycle the inequality above holds with equality with~$z_0$ replaced by~$z_0'$.
\end{enumerate}
\end{lemm}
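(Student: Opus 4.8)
The plan is to reduce both parts to a single ``product formula'' for the zeros of a power series in $\OK[[z]]$ whose constant term has absolute value $< 1$, and then to apply it to an auxiliary series whose zeros in $\mK$ contain a full cycle of the given periodic point. To begin, I would pass to the completion of an algebraic closure of $K$; this changes neither the absolute values nor the reductions, hence neither the hypotheses, the conclusions, nor the Weierstrass degrees in the statement, so we may assume $K$ algebraically closed and complete. The core of the argument is the following \emph{Key Lemma}: if $\psi \in \OK[[z]]$ is nonzero with $|\psi(0)| < 1$, and $S$ is a finite set of zeros of $\psi$ in $\mK$, then $\prod_{\alpha \in S} |\alpha| \ge |\psi(0)|$, with equality if and only if $\wideg(\psi)$ is finite, $S$ is the entire zero set of $\psi$ in $\mK$, and every $\alpha \in S$ is a simple zero (so in particular $|S| = \wideg(\psi)$). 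When $\wideg(\psi) < \infty$ this follows from Weierstrass preparation $\psi = u \cdot P$ with $u$ a unit and $P$ distinguished of degree $\wideg(\psi)$: the zeros of $\psi$ in $\mK$ are the $\wideg(\psi)$ zeros of $P$, counted with multiplicity, and $|\psi(0)| = |P(0)| = \prod_{P(\alpha)=0}|\alpha|$; since each such $\alpha$ has $|\alpha| < 1$, passing to the subset $S$ only increases the product, with equality precisely when no zero is omitted and all those in $S$ are simple. When $\wideg(\psi) = \infty$ all coefficients of $\psi$ lie in $\mK$, so the Newton polygon of $\psi$ takes strictly positive values over every finite abscissa; hence the sum of the valuations of the finitely many zeros of $\psi$ of absolute value at least $\rho_0 := \min_{\alpha \in S} |\alpha| > 0$ is strictly less than that of $\psi(0)$, giving $\prod_{\alpha \in S}|\alpha| > |\psi(0)|$, which is again consistent with the equality clause.

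For Part~1, I would put $\psi_1(z) := (f^q(z) - z)/z \in \OK[[z]]$, whose constant term is $\lambda^q - 1 \neq 0$ with absolute value $< 1$ because $\tlambda^q = 1$. Its zeros in $\mK \setminus \{0\}$ are exactly the nonzero fixed points of $f^q$, and by Lemma~\ref{l:admissible periods} every such point has minimal period $q$; conversely every periodic point of minimal period $q$ in $\mK\setminus\{0\}$ is such a zero. Since $w_0$ has minimal period $q$, its cycle consists of $q$ distinct points, all of absolute value $|w_0|$ because $f$ is an isometry of $\mK$, all of them zeros of $\psi_1$. Applying the Key Lemma with $S$ this cycle gives $|w_0|^q \ge |\psi_1(0)| = |\lambda^q - 1|$, that is,~\eqref{e:fixed bound}; equality holds iff $\wideg(\psi_1) = q$, i.e.\ $\wideg(f^q(z) - z) = q + 1$, and in that case the zeros of $\psi_1$ in $\mK$ are exactly the points of this cycle, whence it is the unique cycle of minimal period $q$ of $f$ in $\mK \setminus \{0\}$ and each of its points realizes equality.

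For Part~2, I would first observe that $\lambda^{qp^j} \neq 1$ for all $0 \le j \le n$, since otherwise $\lambda^{qp^n} = 1$. By Lemma~\ref{l:fixed are periodic} applied to $g := f^{qp^{n-1}}$ (which satisfies $|g(0)| < 1$) with $m = p$, the power series $f^{qp^{n-1}}(z) - z$ divides $f^{qp^n}(z) - z$ in $\OK[[z]]$; set $Q_n(z) := (f^{qp^n}(z) - z)/(f^{qp^{n-1}}(z) - z) \in \OK[[z]]$. Its constant term equals $(\lambda^{qp^n} - 1)/(\lambda^{qp^{n-1}} - 1) = 1 + \lambda^{qp^{n-1}} + \dots + \lambda^{(p-1)qp^{n-1}}$, which is nonzero and whose reduction in $\tK$ is $\sum_{j=0}^{p-1}\tlambda^{jqp^{n-1}} = p \cdot 1 = 0$ (using $\tlambda^{qp^{n-1}} = 1$), so it has absolute value $< 1$. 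Each point $f^j(z_0)$ of the cycle of $z_0$ is fixed by $f^{qp^n}$ but not by $f^{qp^{n-1}}$ — the latter because $f^{qp^{n-1}}(z_0) \ne z_0$, as $z_0$ has minimal period $qp^n$ — and hence, being a zero of $f^{qp^n}(z) - z = (f^{qp^{n-1}}(z)-z)Q_n(z)$ but not of the first factor, is a zero of $Q_n$ in $\mK$; these $qp^n$ points are distinct, again by the isometry property. The Key Lemma with $S$ this cycle gives $|z_0|^{qp^n} \ge |Q_n(0)|$, that is,~\eqref{e:periodic bound bis}; equality holds iff $\wideg(Q_n) = qp^n$, which is~\eqref{e:minimal wideg increment}, and in that case the zeros of $Q_n$ in $\mK$ are exactly the points of the cycle of $z_0$. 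Since every periodic point of minimal period $qp^n$ is a zero of $Q_n$ by the same reasoning, this is the unique cycle of minimal period $qp^n$ of $f$ and each of its points realizes equality.

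The step I expect to be the main obstacle is the case where the auxiliary series has infinite Weierstrass degree, which genuinely occurs — for instance whenever $\wtf$ has finite order, so that $f^{qp^n}(z) - z$ reduces to $0$ and may have infinitely many zeros in $\mK$ — in which case the naive identity ``the product of the absolute values of all nonzero fixed points of $f^{qp^n}$ equals $|\lambda^{qp^n} - 1|$'' is no longer meaningful. Overcoming it is exactly the content of the Newton polygon half of the Key Lemma: one sums valuations over only the finitely many zeros of absolute value at least $|w_0|$ (respectively $|z_0|$) and uses that all coefficients lie in $\mK$ to force a strict inequality. A secondary, routine, obstacle is the bookkeeping: verifying that a cycle point of $z_0$ is a zero of $Q_n$ itself rather than of the divisor $f^{qp^{n-1}}(z) - z$, that the cycle points are genuinely distinct, and that the equality condition $\wideg(Q_n) = qp^n$ delivered by the Key Lemma coincides with~\eqref{e:minimal wideg increment}.
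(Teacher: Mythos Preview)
Your proposal is correct and follows essentially the same architecture as the paper's proof: isolate an auxiliary series $\psi$ in $\OK[[z]]$ (namely $(f^q(z)-z)/z$ in Part~1 and the quotient $h(z)=(f^{qp^n}(z)-z)/(f^{qp^{n-1}}(z)-z)$ in Part~2), observe that the full cycle lies among its zeros in $\mK$, and compare the product of their norms against $|\psi(0)|$.

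The one notable difference is the tool used for the product inequality. The paper avoids your Key Lemma entirely: instead of Weierstrass preparation and a Newton-polygon case split for $\wideg(\psi)=\infty$, it proves the elementary fact (Lemma~\ref{l:elementary division}) that if $\xi\in\mK$ is a zero of $h\in\OK[[z]]$ then $(z-\xi)\mid h$ \emph{in $\OK[[z]]$}, and applies it once per orbit point. The quotient then automatically lies in $\OK[[z]]$, so its constant term $\psi(0)/\prod_{\alpha\in\cO}(-\alpha)$ has norm $\le 1$, and equality holds iff that quotient has $\wideg$ equal to $0$, i.e.\ iff $\wideg(\psi)=|\cO|$. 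This single argument handles both finite and infinite Weierstrass degree uniformly, with no appeal to preparation theorems or polygons, and it makes your ``main obstacle'' disappear: the $\wideg=\infty$ case requires no separate treatment. Your route is also correct, just heavier; the paper's substitution trick $h\circ T(z)$ with $T(z)=z+\xi$ gives the same conclusion in three lines. A minor point: you pass to an algebraic closure, which your Weierstrass/Newton-polygon argument needs, whereas the paper only completes $K$, since its division argument never leaves $\OK[[z]]$.
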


Note that in~\eqref{e:minimal wideg increment} above we use the fact that~$f^{qp^{n - 1}}(z) - z$ divides~$f^{q p^n}(z) - z$ in~$\OK[[z]]$, given by Lemma~\ref{l:fixed are periodic} with~$g = f^{q p^{n - 1}}$ and~$m = p$.

The proof of Lemma~\ref{l:periodic bound} is below, after the following lemma.
\begin{lemm}
  \label{l:elementary division}
Let~$K$ be a complete ultrametric field and let~$h(z)$ be a power series in~$\OK[[z]]$.
If~$\xi$ is a zero of~$h$ in~$\mK$, then~$z - \xi$ divides~$h(z)$ in~$\OK[[z]]$.
\end{lemm}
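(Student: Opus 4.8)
The plan is to run the standard Weierstrass-type division of $h(z)$ by the monic linear polynomial $z - \xi$, and then to check that the ultrametric hypotheses force the quotient to have coefficients in $\OK$ rather than merely in $K$. Write $h(z) = \sum_{n \ge 0} a_n z^n$ with all $a_n \in \OK$, and set $h(z) = (z - \xi) \cdot g(z)$ formally with $g(z) = \sum_{n \ge 0} b_n z^n$; comparing coefficients gives the recursion $a_n = b_{n - 1} - \xi b_n$ for $n \ge 1$ together with $a_0 = -\xi b_0$. Since $\xi \in \mK$, if $\xi = 0$ the statement is immediate ($z$ divides $h$ because $a_0 = h(0) = 0$), so assume $0 < |\xi| < 1$. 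From the recursion, $b_{n-1} = a_n + \xi b_n$, which is a backward recursion; instead I would solve it forward in the equivalent form obtained by iterating: $b_n = \xi^{-1}(b_{n-1} - a_n)$. That alone does not keep us in $\OK$, so the right move is to observe that the zero condition $h(\xi) = 0$ is exactly what makes the natural forward-running series converge.

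Concretely, here are the key steps in order. First, from $h(\xi) = 0$ we get $-a_0 = \sum_{n \ge 1} a_n \xi^n$, which I would rewrite as $a_0 = -\xi \left( \sum_{n \ge 1} a_n \xi^{n-1} \right)$, so that $b_0 \= \sum_{n \ge 1} a_n \xi^{n-1}$ is a well-defined element of $\OK$ (the series converges in the complete field $K$ since $|a_n \xi^{n-1}| \to 0$, and each term is in $\OK$, so the limit is in $\OK$ by completeness and the ultrametric inequality). Second, define $b_n$ for $n \ge 1$ by the telescoping formula $b_n \= \sum_{k \ge n+1} a_k \xi^{k - n - 1}$; the same convergence argument shows $b_n \in \OK$ for every $n$. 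Third, verify the two families of identities: $b_{n-1} - \xi b_n = a_n$ for $n \ge 1$ (immediate from the definition of $b_n$ as a tail sum) and $-\xi b_0 = a_0$ (this is precisely the rewritten equation $h(\xi) = 0$ from the first step). Fourth, conclude that $\sum_{n \ge 0} b_n z^n \in \OK[[z]]$ and that $(z - \xi)\sum_{n \ge 0} b_n z^n = \sum_{n \ge 0} (b_{n-1} - \xi b_n) z^n = h(z)$, with the convention $b_{-1} = 0$ handling the constant term via $-\xi b_0 = a_0$.

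The only place where anything could go wrong is confirming that the formally-defined quotient has integral coefficients, and that is handled by the observation that each $b_n$ is an infinite sum of elements of $\OK$ weighted by nonnegative powers of $\xi$, hence lies in $\OK$ by completeness; there is no genuine obstacle here, just the need to pick the tail-sum closed form for $b_n$ (rather than the forward recursion involving division by $\xi$) so that integrality is manifest. I would remark that this is the elementary case of Weierstrass division that also underlies Lemma~\ref{l:fixed are periodic}, and that an entirely analogous argument — or a short induction peeling off one linear factor at a time — yields the factorization by $\prod (z - \xi_i)$ over all zeros $\xi_i$ in $\mK$ when $K$ is algebraically closed, which is what gets used in the proof of Lemma~\ref{l:periodic bound}.
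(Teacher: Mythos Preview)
Your proof is correct. You carry out the division by $z-\xi$ explicitly, writing each coefficient of the quotient as the tail sum $b_n=\sum_{k\ge n+1}a_k\xi^{k-n-1}$, which visibly lies in $\OK$ because $K$ is complete, $a_k\in\OK$, and $|\xi|<1$; the identity $h(\xi)=0$ is exactly what makes the constant-term check $-\xi b_0=a_0$ go through.

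The paper takes a different, shorter route: it sets $T(z)=z+\xi$, observes that $h\circ T\in\OK[[z]]$ vanishes at $z=0$, so $z$ divides $h\circ T$ in $\OK[[z]]$, and then composes the quotient with $T^{-1}$ to get $h(z)/(z-\xi)\in\OK[[z]]$. This hides all the series manipulation inside the (easy) fact that the substitution $z\mapsto z\pm\xi$ preserves $\OK[[z]]$ when $\xi\in\mK$. Your approach is more computational but has the minor virtue of giving a closed formula for the quotient coefficients; the paper's buys brevity by reducing to the trivial case $\xi=0$.
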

\begin{proof}
Put~$T(z) = z + \xi$ and note that~$h \circ T(z)$ vanishes at~$z = 0$ and is in~$\OK[[z]]$.
This implies that~$z$ divides~$h \circ T(z)$ in~$\OK[[z]]$.
Letting~$g(z) \= h \circ T(z) / z$, it follows that the power series~$g \circ T^{-1}(z) = h(z) / (z - \xi)$ is in~$\OK[[z]]$, as wanted.
\end{proof}

\begin{proof}[Proof of Lemma~\ref{l:periodic bound}]
Replacing~$K$ by one of its completions if necessary, assume~$K$ complete.

We use the fact that, since~$|f'(0)| = 1$, the power series~$f$ maps~$\mK$ to itself isometrically, see for example~\cite[\S$1.3$]{Rivthese}.

\partn{1}
To prove~\eqref{e:fixed bound}, let~$w_0$ in~$\mK \setminus \{ 0 \}$ be a periodic point of~$f$ of
minimal period~$q$.
Note that every point in the forward orbit~$\cO$ of~$w_0$ under~$f$ is a zero of the power series~$(f^q(z) - z) / z$, and that the constant term of this power series is~$\lambda^q - 1$.
On the other hand, $\cO$ consists of~$q$ points, and, since~$f$ maps~$\mK$ to itself isometrically, all the points in~$\cO$ have the same norm.
Applying Lemma~\ref{l:elementary division} inductively with~$\xi$ replaced by each element of~$\cO$, it follows that~$\prod_{w_0' \in \cO} (z - w_0')$ divides~$(f^q(z) - z)/z$ in~$\OK[[z]]$.
In particular, the constant term
\begin{equation}
\label{e:total product of fixed points}
\left( \lambda^q - 1 \right) / \prod_{w_0' \in \cO} (- w_0')
\end{equation}
of the power series $((f^q(z) - z)/z) / \prod_{w_0' \in \cO} (z - w_0')$ is in~$\OK$.
We thus have
\begin{equation}
\label{e:powered fixed lower bound}
|w_0|^q = \prod_{w_0' \in \cO} |w_0'|
\ge
\left| \lambda^q - 1 \right|,
\end{equation}
and therefore~\eqref{e:fixed bound}.
Moreover, equality holds precisely when the constant term~\eqref{e:total product of fixed points} of~$((f^q(z) - z)/z) / \prod_{w_0' \in \cO} (z - w_0')$ has norm equal to~$1$.
Equivalently, equality in~\eqref{e:powered fixed lower bound} holds if and only if~$\wideg(f^q(z) - z) = q + 1$.
Finally, when this last equality holds, the set~$\cO$ is the set of all zeros of~$(f^q(z) - z)/z$ in~$\mK$, so~$\cO$ is the only cycle of minimal period~$q$ of~$f$.
This completes the proof of part~$1$.

\partn{2}
To prove~\eqref{e:periodic bound bis}, let~$n \ge 1$ be an integer such
that~$\lambda^{q p^n} \neq 1$, and~$z_0$ a periodic point of~$f$ of minimal period~$qp^n$.
By Lemma~\ref{l:fixed are periodic} with~$g = f^{q p^{n - 1}}$ and~$m = p$, the power series~$f^{q p^{n - 1}}(z) - z$ divides~$f^{q p^n}(z) - z$ in~$\OK[[z]]$.
Note that every point in the forward orbit~$\cO$ of~$z_0$ under~$f$ is a zero of the power series
$$ h(z) \= \frac{f^{q p^n}(z) - z}{f^{q p^{n - 1}}(z) - z}, $$
and that the constant term of this power series is
$$ \frac{\lambda^{q p^n} - 1}{\lambda^{q p^{n - 1}} - 1}. $$
On the other hand, $\cO$ consists of~$q p^n$ points, and, since~$f$ maps~$\mK$ to itself isometrically, all the points in~$\cO$ have the same norm.
Applying Lemma~\ref{l:elementary division} inductively with~$\xi$ replaced by each element of~$\cO$, it follows that~$\prod_{z_0' \in \cO} (z - z_0')$ divides~$h(z)$ in~$\OK[[z]]$.
In particular, the constant term
\begin{equation}
\label{e:total product of periodic points}
\left( \frac{\lambda^{q p^n} - 1}{\lambda^{q p^{n - 1}} - 1} \right) / \prod_{z_0' \in \cO} (- z_0')
\end{equation}
of the power series $h(z) / \prod_{z_0' \in \cO} (z - z_0')$ is in~$\OK$.
We thus have
\begin{equation}
\label{e:powered lower bound}
|z_0|^{q p^n} = \prod_{z_0' \in \cO} |z_0'|
\ge
\left| \frac{\lambda^{q p^n} - 1}{\lambda^{q p^{n - 1}} - 1} \right|,
\end{equation}
and therefore~\eqref{e:periodic bound bis}.
Note that equality holds if and only if the constant term~\eqref{e:total product of periodic points} of the power series~$h(z) / \prod_{z_0' \in \cO} (z - z_0')$ has norm equal to~$1$.
Equivalently, equality holds if and only if~$\wideg \left(h(z) / \prod_{z_0' \in \cO} (z - z_0') \right) = 0$.
Using
$$ \wideg \left(h(z) / \prod_{z_0' \in \cO} (z - z_0') \right)
=
\wideg \left( \frac{f^{q p^n}(z) - z}{f^{q p^{n - 1}}(z) - z} \right) - q p^n, $$
we conclude that equality holds if and only if~$\wideg \left( \frac{f^{q p^n}(z) - z}{f^{q p^{n - 1}}(z) - z} \right) = q p^n$.
Finally, note that if this last equality holds, then~$\cO$ is the set of all zeros of~$\frac{f^{q p^n}(z) - z}{f^{q p^{n - 1}}(z) - z}$ in~$\mK$, so~$\cO$ is the only cycle of minimal period~$qp^n$ of~$f$.
This completes the proof of part~$2$.
\end{proof}

\section{Minimally ramified power series}
\label{s:minimally ramified}
Our main goal in this section is study condition~\eqref{e:minimal wideg increment} appearing in the optimality part of Lemma~\ref{l:periodic bound}.
To do this, for a given prime number~$p$ and a field~$k$ of characteristic~$p$, define for each power series~$g_0(\zeta) = \zeta + \cdots$ in~$k[[\zeta]]$ and each integer~$n \ge 0$, the order
$$ i_n(g_0)
\=
\ord \left( \frac{g_0^{p^n}(\zeta) - \zeta}{\zeta} \right). $$
Note that if~$K$, $q$, $\lambda$, and~$f$ are as in Lemma~\ref{l:periodic bound} and~$k = \tK$, then~$\tlambda$ is a root of unity of order~$q$ in~$k$, and for each integer~$n \ge 1$ such that
$$ \wideg \left( f^{qp^{n - 1}}(z) - z \right)
=
i_{n - 1} (\widetilde{f}^q) + 1 $$
is finite, we have
$$ \wideg \left( \frac{f^{qp^n}(\zeta) - \zeta}{f^{qp^{n - 1}}(\zeta) - \zeta} \right)
=
i_n(\widetilde{f}^q) - i_{n - 1}(\widetilde{f}^q). $$
Thus, \eqref{e:minimal wideg increment} naturally leads us to consider, for a root of unity~$\gamma$ of order~$q$ in~$k$ and a power series~$g(\zeta) = \gamma \zeta + \cdots$ in~$k[[\zeta]]$, the sequence~$\{ i_n(g^q) \}_{n = 0}^{+ \infty}$.
When~$p$ is odd, we show that for an integer~$n \ge 1$ such that~$i_{n - 1}(g^q)$ is finite, the equality
\begin{equation}
\label{e:minimally ramified increment}
i_n(g^q) - i_{n - 1}(g^q) = qp^n
\end{equation}
can only hold if~$g$ is ``minimally ramified'', in the sense that the sequence~$\left\{ i_n(g^q) \right\}_{n = 0}^{+ \infty}$ is the smallest possible, see Corollary~\ref{c:minimally ramified increment} in~\S\ref{ss:almost minimally ramified}, which also includes a characterization in the case~$p = 2$.
Thus, in the case~$p$ is odd and~$f(z) = \lambda z + \cdots$ is a polynomial in~$\OK[z]$ with non-linear reduction, the existence of an optimal cycle of period~$qp^n$ implies that~$\wtf$ is minimally ramified, see Corollary~\ref{c:optimality implies minimality}.

The structure of this section is as follows.
In~\S\ref{ss:higher order sen} we establish a ``higher order'' version of the main theorem of Sen in~\cite{Sen69}.
In~\S\ref{ss:minimally ramified} we combine this result with a result of Laubie and Sa{\"\i}ne in~\cite{LauSai98}, extending a previous result of Keating in~\cite{Kea92}, to characterize minimally ramified power series.
In~\S\ref{ss:almost minimally ramified} introduce the notion of ``almost minimally ramified'' power series, and we use it to handle the case~$p = 2$.

\subsection{A higher order version of Sen's theorem}
\label{ss:higher order sen}
The purpose of this section is to prove the following theorem.
\begin{theoalph}
\label{t:higher order sen}
Let~$p$ be a prime number, and~$k$ a field of characteristic~$p$.
Moreover, let~$\gamma$ be a root of unity in~$k$, $q \ge 1$ the order of~$\gamma$, and
$$ g(\zeta) = \gamma \zeta + a_2 \zeta^2 + \cdots $$ 
a power series in~$k[[\zeta]]$.
Then~$i_0(g^q)$ is divisible by~$q$ when finite.
Furthermore, for every integer~$n \ge 1$ such that~$i_n(g^q)$ is finite, $i_{n - 1}(g^q)$ is also finite and
$$ i_n(g^q)
\equiv
i_{n - 1}(g^q) \mod q p^n. $$
In particular, for every~$n \ge 0$ such that~$i_n(g^q)$ is finite, $i_n(g^q)$ is divisible by~$q$.
\end{theoalph}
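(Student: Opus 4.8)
The plan is to split the statement into two parts proved independently and then combined. The first is the \emph{divisibility by~$q$}: for every $n \ge 0$ with $i_n(g^q)$ finite one has $q \mid i_n(g^q)$. The second is the congruence $i_n(g^q) \equiv i_{n-1}(g^q) \pmod{p^n}$ for $n \ge 1$, which is precisely Sen's theorem from~\cite{Sen69} applied to the tangent-to-the-identity series $g^q(\zeta) = \zeta + \cdots$ (so that for $q = 1$ the whole statement reduces to Sen's). Since $\gamma$ is a root of unity in a field of characteristic~$p$, its order~$q$ is automatically prime to~$p$; hence, once both parts are in hand, $i_n(g^q) - i_{n-1}(g^q)$ is divisible by~$q$ and by~$p^n$ with $\gcd(q,p^n) = 1$, and the congruence modulo~$qp^n$ follows at once.

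For the divisibility by~$q$, fix $n \ge 0$ with $i_n(g^q)$ finite, put $h \= g^{q p^n}$, and write $h(\zeta) = \zeta + c\zeta^{e+1} + \cdots$ with $c \neq 0$ and $e = i_n(g^q)$. The point is that $g$ and $h$ commute, both being iterates of~$g$: $g \circ h = h \circ g$. I would compare the two sides degree by degree. Writing $g(u) - g(v) = (u-v) Q(u,v)$, where $Q$ is a power series whose value at the origin is the linear coefficient~$\gamma$ of~$g$, and taking $u = h(\zeta)$, $v = \zeta$, gives $g(h(\zeta)) - g(\zeta) = \gamma c \zeta^{e+1} + \cdots$. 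On the other hand, substituting $w = g(\zeta) = \gamma\zeta + \cdots$ into $h(w) - w = c w^{e+1} + \cdots$ gives $h(g(\zeta)) - g(\zeta) = c\gamma^{e+1}\zeta^{e+1} + \cdots$. By commutativity the coefficients of~$\zeta^{e+1}$ agree, so $\gamma c = c\gamma^{e+1}$; dividing by~$c$ yields $\gamma^e = 1$, hence $q \mid e = i_n(g^q)$. Taking $n = 0$ gives the first assertion of the theorem, and running this for each~$n$ gives the concluding ``in particular'' clause.

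For the congruence, let $n \ge 1$ be such that $i_n(g^q)$ is finite, i.e.\ $(g^q)^{p^n} \neq \id$. Then $(g^q)^{p^{n-1}} \neq \id$ as well, for otherwise raising it to the $p$-th power would give $(g^q)^{p^n} = \id$; thus $i_{n-1}(g^q)$ is finite. Sen's theorem~\cite{Sen69} applied to~$g^q$ now gives $i_n(g^q) \equiv i_{n-1}(g^q) \pmod{p^n}$, and combining this with $q \mid i_n(g^q)$, $q \mid i_{n-1}(g^q)$, and $\gcd(q,p^n) = 1$ gives $i_n(g^q) \equiv i_{n-1}(g^q) \pmod{q p^n}$.

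The main obstacle is to ensure Sen's theorem is invoked in the right generality: it is needed in its iterative form, stated through the integers~$i_m$, for a power series~$g^q$ that may have infinite order, over an arbitrary --- possibly imperfect or incomplete --- field~$k$ of characteristic~$p$. If the available reference does not cover this, the substantive work is to reprove Sen's congruence in this formal setting, e.g.\ by truncating~$g^q$ and reducing to the classical finite-order statement; once that is done, the $q$-equivariance argument above applies verbatim.
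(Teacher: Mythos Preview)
Your proof is correct and takes a genuinely different route from the paper's.

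The paper follows Lubin's strategy directly: it lifts~$g$ to a generic polynomial~$P$ over a characteristic-zero complete field~$K$ with residue field~$k$, arranges (via Lemma~\ref{l:generic independence zero characteristic}) that~$P$ has no parabolic periodic points, and then counts the zeros of~$(P^{qp^n}(z)-z)/(P^{qp^{n-1}}(z)-z)$ in~$\mK$. These zeros are simple and, by Lemma~\ref{l:admissible periods}, are exactly the periodic points of minimal period~$qp^n$; hence their number~$i_n(g^q)-i_{n-1}(g^q)$ is divisible by~$qp^n$ in one stroke. The~$n=0$ case is handled the same way. In contrast, you decouple the~$q$-part from the~$p^n$-part: the~$q$-divisibility comes from a two-line commutation argument between~$g$ and~$g^{qp^n}$, and the~$p^n$-congruence is Sen's original theorem~\cite{Sen69} applied to the tangent-to-identity series~$g^q$; the Chinese remainder theorem glues them. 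Your argument is more elementary and modular, treating Sen's theorem as a black box, whereas the paper's approach is self-contained (it reproves Sen's~$q=1$ case in passing) and yields the geometric interpretation of~$i_n(g^q)-i_{n-1}(g^q)$ as a count of periodic orbits, which is the interpretation driving the rest of the paper. Your caveat about the generality of Sen's theorem is well placed but not a real obstruction here: the paper itself identifies the~$q=1$ case of the present theorem with~\cite[Theorem~1]{Sen69}, and both Sen's cocycle proof and Lubin's proof~\cite{Lub95} apply to formal power series over an arbitrary field of characteristic~$p$.
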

When restricted to~$q = 1$, the theorem above is~\cite[Theorem~$1$]{Sen69}.
Sen's original proof in~\cite{Sen69} is based on a careful analysis of the orders of cocycles of power series in~$k[[\zeta]]$.
Lubin gave a conceptual proof of this result in~\cite{Lub95}, that is even shorter than Sen's original proof; Lubin interprets~$i_n(g) - i_{n - 1}(g)$ as the number of periodic points of minimal period~$p^n$ of a certain ``lift'' of~$g$.
See also~\cite[Theorem~$3.1$]{Li96c} for a variant of Lubin's proof.

To prove Theorem~\ref{t:higher order sen}, we follow Lubin's strategy.
The main difficulty is to find, for a given~$n$, a lift~$g$ such that the zeros of~$g^{qp^n}(z) - z$ are simple.
Lubin achieved this through an inductive perturbative procedure.
We use the fact that a generic polynomial has no parabolic periodic point.

\begin{lemm}
\label{l:generic independence zero characteristic}
Let~$K$ be a field of characteristic zero, $d \ge 2$ an integer, and~$a_1$, \ldots, $a_d$ in~$K$ algebraically independent over the prime field of~$K$.
Then the polynomial
$$ a_1 z + \cdots + a_d z^d $$
in~$K[z]$ has no parabolic periodic point.
\end{lemm}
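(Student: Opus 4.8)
The plan is to argue by contradiction, translating the existence of a parabolic cycle of $P(z) := a_1 z + \cdots + a_d z^d$ into the vanishing of a resultant which, regarded as a polynomial in $a_1, \dots, a_d$ with coefficients in the prime field of~$K$, is \emph{not} identically zero; algebraic independence then closes the argument.

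First I would reduce to fixed points with multiplier~$1$. Suppose $z_0$ is a parabolic periodic point of~$P$, of minimal period~$\ell$, and let $\mu := (P^\ell)'(z_0)$ be its multiplier, a root of unity with, say, $\mu^m = 1$. Putting $N := \ell m$, the chain rule together with the periodicity of the $P$-orbit of~$z_0$ gives $(P^N)'(z_0) = \mu^m = 1$, so $z_0$ is a common zero in~$\overline{K}$ of $P^N(z) - z$ and of $(P^N)'(z) - 1$. Thus it suffices to show that for every $N \ge 1$ these two polynomials have no common zero in~$\overline{K}$.

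Next I would set up the resultant in the ``universal'' ring $R := \mathbb{Z}[x_1, \dots, x_d]$, writing $\mathcal{P}(z) := x_1 z + \cdots + x_d z^d \in R[z]$. Inspecting leading terms shows that $\mathcal{P}^N(z) - z$ has $z$-degree exactly $d^N$ and $(\mathcal{P}^N)'(z) - 1$ has $z$-degree exactly $d^N - 1$, with leading coefficients $x_d^{e}$ and $d^N x_d^{e}$ respectively, where $e := (d^N - 1)/(d - 1)$; both are nonzero in~$R$. Let $F_N \in R$ be the Sylvester resultant in~$z$ of these two polynomials for these degrees. Since forming the Sylvester determinant is functorial under ring homomorphisms and the top coefficients stay nonzero under the specialization $x_i \mapsto a_i$ (because $a_d \neq 0$) and under the specialization $x_1, \dots, x_{d-1} \mapsto 0$, $x_d \mapsto 1$ (because $K$ has characteristic~$0$), in each case $F_N$ evaluates to the honest resultant $\mathrm{Res}_z$ of the corresponding pair of polynomials, hence detects a common root in the algebraic closure. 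Applying this to $x_i \mapsto a_i$, the previous paragraph would force $F_N(a_1, \dots, a_d) = 0$. Applying it instead to $z \mapsto z^d$, a common zero $w$ of $z^{d^N} - z$ and $d^N z^{d^N - 1} - 1$ would satisfy $w = 0$ or $w^{d^N - 1} = 1$, and also $d^N w^{d^N - 1} = 1$, which is impossible for $d \ge 2$ and $N \ge 1$; hence this specialization of $F_N$ is nonzero, so $F_N$ is a nonzero element of $\mathbb{Z}[x_1, \dots, x_d] \subseteq \mathbb{Q}[x_1, \dots, x_d]$. As $a_1, \dots, a_d$ are algebraically independent over the prime field of~$K$, this gives $F_N(a_1, \dots, a_d) \neq 0$, the desired contradiction.

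The only point requiring care is the degree bookkeeping for the resultant: one must check that the $z$-degrees of $\mathcal{P}^N(z) - z$ and of $(\mathcal{P}^N)'(z) - 1$ do not drop under either specialization, so that the formal resultant genuinely tests for a common root over the algebraic closure in both cases. This is immediate from $a_d \neq 0$ and $\operatorname{char} K = 0$; everything else is a routine computation with Sylvester matrices and the one-line remark that $z \mapsto z^d$ has all its cycles repelling or superattracting, hence no parabolic cycle.
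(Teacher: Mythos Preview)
Your proof is correct and shares the paper's core idea: specialize the coefficients so that the polynomial becomes $z^d$, observe that $z^d$ has no parabolic cycles, and conclude via algebraic independence. The difference is in the packaging. The paper argues more directly: it picks a ring homomorphism $\sigma \colon \Q[z_0, a_1, \ldots, a_d] \to \C$ sending $a_d \mapsto 1$ and the other $a_j \mapsto 0$, then computes that the image multiplier has complex absolute value $d^n > 1$, contradicting that it is a root of unity. You instead encode ``has a periodic point of period $N$ with multiplier $1$'' as the vanishing of a Sylvester resultant $F_N \in \Z[x_1, \ldots, x_d]$, and show $F_N$ is not identically zero by the same specialization to $z^d$. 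Your route is slightly more explicit and self-contained (and is in fact the style the paper adopts later, in the proofs of Theorem~\ref{t:optimality and minimality}' and Lemma~\ref{l:concrete independence}), at the cost of the degree bookkeeping you flag; the paper's route is shorter but tacitly uses that the specialization $\Q[a_1, \ldots, a_d] \to \C$ extends over the algebraic element $z_0$.
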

\begin{proof}
Denote by~$\Q$ the prime field of~$K$, and by~$| \cdot |$ the usual absolute value in~$\C$.

Suppose by contradiction there is an integer~$n \ge 1$ and a periodic point~$z_0$ of period~$n$ of the polynomial~$P(z) \= a_1 z + \cdots + a_d z^d$ in~$K[z]$, such that~$(P^n)'(z_0)$ is a root of unity.
Let~$\sigma : \Q[z_0, a_1, \ldots, a_d] \to \C$ be a ring homomorphism such that~$\sigma(a_d) = 1$, and such that for each~$j$ in~$\{1, \ldots, d - 1 \}$ we have~$\sigma(a_j) = 0$.
Then~$\sigma(P)(z) = z^d$, $\sigma(z_0)$ is a periodic point of period~$n$ of~$\sigma(P)$, and~$(\sigma(P)^n)'(\sigma(z_0)) = \sigma((P^n)'(z_0))$ is a root of unity.
This implies that~$\sigma(z_0) \neq 0$, and therefore that~$|\sigma(z_0)| = 1$.
Thus,
$$ \left| (\sigma(P)^n)'(\sigma(z_0)) \right|
=
\left| d^n \sigma(z_0)^{d^n - 1} \right|
=
d^n. $$
This contradicts our hypothesis that~$\sigma((P^n)'(z_0))$ is a root of unity, and proves the lemma.
\end{proof}
\begin{proof}[Proof of Theorem~\ref{t:higher order sen}]
Replacing~$k$ by one of its algebraic closures if necessary, assume~$k$ is algebraically closed.
Then~$k$ is perfect and therefore there is an algebraically closed field~$K$ of characteristic zero that is complete with respect to a non-trivial ultrametric norm and whose residue field~$\tK$ is isomorphic to~$k$, see for example~\cite[II, \emph{Th{\'e}or{\`e}me}~$3$]{Ser68a}.
Identify~$k$ with~$\tK$.
Then~$K$ is uncountable and therefore we can choose for each~$j$
in~$\{ 1, \ldots, i_n(g^q) + 1 \}$ an element~$a_j$ of~$K$, such that the~$a_1$, \ldots, $a_{i_n(g^q) + 1}$ are algebraically independent over the prime field of~$K$ and such that the reduction~$\tP$ of the polynomial
$$ P(z) = a_1 z + \cdots + a_{i_n(g^q) + 1} z^{i_n(g^q) + 1} $$
in~$K[z]$, satisfies $\tP(\zeta) \equiv g(\zeta) \mod \left\langle \zeta^{i_n(g^q) + 2} \right\rangle$ in~$k[\zeta]$.
Then
$$ \wideg \left( P^{qp^n}(z) - z \right) = i_n(g^q) + 1 $$
and by Lemma~\ref{l:generic independence zero characteristic} the polynomial~$P$ has no parabolic periodic points.

Suppose~$n = 0$.
From~$a_1^q \neq 1$, it follows that~$P^q(z) - z$ has precisely~$i_0(g^q)$ zeros in~$\mK \setminus \{ 0 \}$, counted with multiplicity.
Note that if~$P^q(z) - z$ had a double zero of~$z_0$ in~$\mK \setminus \{ 0 \}$, then~$z_0$ would also be a zero of~$(P^q)'(z) - 1$, and therefore~$z_0$ would be a parabolic periodic point of~$P$.
We conclude that all zeros of~$P^q(z) - z$ in~$\mK \setminus \{ 0 \}$ are simple, and therefore that~$P^q(z) - z$ has precisely~$i_0(g^q)$ zeros in~$\mK \setminus \{ 0 \}$.
By part~$2$ of Lemma~\ref{l:admissible periods}, every zero of~$P^q(z) - z$ in~$\mK \setminus \{ 0 \}$ is a periodic point of minimal period~$q$ of~$P$.
Combined with~$P(\mK) = \mK$, it follows that the set~$Z_0$ of zeros of~$P^q(z) - z$ in~$\mK \setminus \{ 0 \}$ is a union of periodic orbits of minimal period~$q$.
We conclude that~$\# Z_0 = i_0(g^q)$ is divisible by~$q$.
This completes the proof of the theorem in the case~$n = 0$.

Suppose~$n \ge 1$.
Our assumption that~$i_n(g^q)$ is finite, together with the straight forward inequality~$i_{n - 1}(g^q) \le i_n(g^q)$, implies that~$i_{n - 1}(g^q)$ is also finite.
So, by our choice of~$P$, we have
$$ \wideg \left( P^{qp^{n - 1}}(z) - z \right) = i_{n - 1}(g^q) + 1, $$
and therefore $h(z) \= \frac{P^{qp^n}(z) - z}{P^{qp^{n - 1}}(z) - z}$ has precisely $i_n(g^q) - i_{n - 1}(g^q)$ zeros in~$\mK$, counted with multiplicity.
As in the previous case, if~$P^{qp^n}(z) - z$ had a double zero~$z_0$, then~$z_0$ would also be a zero of~$(P^{qp^n})'(z) - 1$, and therefore~$z_0$ would be a parabolic periodic point of~$P$.
We conclude that all zeros of~$P^{qp^n}(z) - z$, and hence of~$h$, are simple.
In particular, $h$ has precisely~$i_n(g^q) - i_{n - 1}(g^q)$ zeros in~$\mK$.
It also follows that a zero of~$h$ cannot be a zero of~$P^{qp^{n - 1}}(z) - z$.
In view of part~$2$ of Lemma~\ref{l:admissible periods}, this implies that the zeros of~$h$ are precisely the periodic points of~$P$ of minimal period~$qp^n$.
Since~$P(\mK) = \mK$, it follows that the set~$Z_n$ of zeros of~$h$ in~$\mK$ is a union of periodic orbits of minimal period~$qp^n$.
We conclude that~$\# Z_n = i_n(g^q) - i_{n - 1}(g^q)$ is divisible by~$qp^n$.
This completes the proof of the theorem.
\end{proof}

\subsection{Minimally ramified power series}
\label{ss:minimally ramified}
In this section we introduce the notion of ``minimally ramified'' power series, that is motivated by the following proposition.
\begin{prop}
  \label{p:minimally ramified}
Let~$p$ be a prime number, $k$~a field of characteristic~$p$, and~$\gamma$ a root of unity in~$k$.
If we denote by~$q$ the order of~$\gamma$, then for every power series~$g(\zeta) = \gamma \zeta + \cdots$ in~$k[[\zeta]]$ and every integer~$n \ge 0$, we have
\begin{equation}
  \label{e:minimally ramified at n}
i_n(g^q)
\ge
q \frac{p^{n + 1} - 1}{p - 1}.  
\end{equation}
If~$p$ is odd (resp.~$p = 2$) and equality holds for some~$n \ge 1$ (resp.~$n \ge 2$), then equality holds for every~$n \ge 0$.
\end{prop}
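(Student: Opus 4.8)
The plan is to establish the inequality~\eqref{e:minimally ramified at n} by induction on~$n$, feeding in Theorem~\ref{t:higher order sen} for the arithmetic, and then to obtain the propagation of equality from a telescoping argument going downward, together with the finer description of ramification jumps of Keating and Laubie--Sa\"{\i}ne going upward. Throughout set~$G \= g^q$; since~$\gamma$ has order~$q$ we have~$G(\zeta) = \zeta + \cdots$, so each~$i_n(G)$ is defined and is~$\ge 1$ (or~$+\infty$). First I would record the elementary fact that for any~$H(\zeta) = \zeta + \cdots$ in~$k[[\zeta]]$ with~$\ord(H(\zeta) - \zeta) = M < +\infty$ one has~$\ord(H^p(\zeta) - \zeta) \ge M + 1$: a direct induction shows that the coefficient of~$\zeta^M$ in~$H^j(\zeta) - \zeta$ equals~$j$ times that of~$\zeta^M$ in~$H(\zeta) - \zeta$, and~$k$ has characteristic~$p$. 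Applied to~$H = G^{p^{n-1}}$ this gives
$$ i_n(G) \ \ge\ i_{n-1}(G) + 1 \qquad (n \ge 1), $$
with the obvious conventions if a term is~$+\infty$.

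Next I would prove~$i_n(G) \ge q\frac{p^{n+1}-1}{p-1}$ by induction on~$n$. For~$n = 0$: if~$i_0(G) = +\infty$ there is nothing to prove; otherwise~$i_0(G)$ is divisible by~$q$ by Theorem~\ref{t:higher order sen}, hence~$i_0(G) \ge q$. For the inductive step, assume the bound at~$n-1$; if~$i_n(G) = +\infty$ we are done, and otherwise Theorem~\ref{t:higher order sen} shows that~$i_{n-1}(G)$ is finite and~$i_n(G) \equiv i_{n-1}(G) \pmod{qp^n}$, so together with the displayed inequality~$i_n(G) \ge i_{n-1}(G) + qp^n \ge q\frac{p^n-1}{p-1} + qp^n = q\frac{p^{n+1}-1}{p-1}$.

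For the second assertion, suppose~$p$ is odd and equality holds in~\eqref{e:minimally ramified at n} at some~$n_0 \ge 1$. Reading the chain of inequalities above downward from level~$n_0$,
$$ q\frac{p^{n_0+1}-1}{p-1} = i_{n_0}(G) \ge i_{n_0-1}(G) + qp^{n_0} \ge \cdots \ge i_0(G) + q(p + \cdots + p^{n_0}) \ge q + q(p + \cdots + p^{n_0}) = q\frac{p^{n_0+1}-1}{p-1}, $$
so every inequality is an equality; in particular~$i_m(G) = q\frac{p^{m+1}-1}{p-1}$ for~$0 \le m \le n_0$, and in particular~$i_0(G) = q$ and~$i_1(G) = q(p+1)$. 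Now~$G = \zeta + \cdots$ is not the identity and~$i_0(G) = q$ is prime to~$p$; hence (after passing to an algebraic closure of~$k$, which changes no~$i_n(G)$) the extension by Laubie and Sa\"{\i}ne~\cite{LauSai98} of Keating's theorem~\cite{Kea92} applies to~$G$ and determines all the~$i_n(G)$ from the two data~$i_0(G)$ and~$i_1(G)$; substituting the values just obtained gives~$i_n(G) = q\frac{p^{n+1}-1}{p-1}$ for every~$n \ge 0$. The case~$p = 2$ is handled the same way, except that the downward telescoping must also produce~$i_2(G) = 7q$ before the relevant corrected formula can be applied, which is exactly why the hypothesis is~$n_0 \ge 2$ there; it is most conveniently phrased through the ``almost minimally ramified'' power series of~\S\ref{ss:almost minimally ramified}.

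The hard part will be the upward propagation of equality. Theorem~\ref{t:higher order sen} by itself gives only the lower bound and does not even exclude~$i_n(G) = +\infty$ for~$n > n_0$ — for the series~$\zeta/(1-\zeta)$ one has~$i_0 = 1$ (minimal) yet its~$p$-th iterate is the identity, so the corresponding~$i_1$ is~$+\infty$ — and therefore input beyond the congruences is indispensable; the refined ramification theory of Keating and Laubie--Sa\"{\i}ne is precisely what provides it. This is also the structural reason the hypothesis cannot be relaxed to~$n_0 = 0$.
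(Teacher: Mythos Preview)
Your proof is correct and follows essentially the same architecture as the paper's: induction on~$n$ using Theorem~\ref{t:higher order sen} for the lower bound, then downward propagation of equality to levels~$0$ and~$1$ (and~$2$ when~$p=2$), and finally Laubie--Sa\"{\i}ne (the paper's Lemma~\ref{l:laubie saine}) for the upward propagation to all~$n$.

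The one genuine difference is in the downward step. The paper argues via Lemma~\ref{l:keating}: from~$i_{n-\ell}(g^q)=q\frac{p^{n-\ell+1}-1}{p-1}$ it observes this value is prime to~$p$, deduces via the contrapositive of part~2 that~$i_{n-\ell-1}(g^q)$ is also prime to~$p$, and then uses part~1 to get~$p\,i_{n-\ell-1}(g^q)+q\le i_{n-\ell}(g^q)$, forcing~$i_{n-\ell-1}(g^q)\le q\frac{p^{n-\ell}-1}{p-1}$. You instead telescope the chain~$i_m(G)\ge i_{m-1}(G)+qp^m$ already established in the inductive proof of the lower bound; since the two ends agree, every link is an equality. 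This is cleaner and avoids any appeal to Lemma~\ref{l:keating} at this stage. Both routes land at the same input~$(i_0,i_1)$ (and~$i_2$ when~$p=2$) for Lemma~\ref{l:laubie saine}, so nothing is lost. Your closing remark that the~$p=2$ case is ``most conveniently phrased through'' \S\ref{ss:almost minimally ramified} is a slight overstatement: for Proposition~\ref{p:minimally ramified} itself the three values~$i_0=q$, $i_1=3q$, $i_2=7q$ feed directly into Lemma~\ref{l:laubie saine} without needing the almost minimally ramified notion.
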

\begin{rema}
  \label{r:minimally ramified}
In contrast with the case where~$p$ is odd, when~$p = 2$ equality in~\eqref{e:minimally ramified at n} for~$n = 1$ does not necessarily imply that we have equality in~\eqref{e:minimally ramified at n} for every~$n \ge 0$.
In fact, suppose~$p = 2$ and put~$g(\zeta) \= \gamma \zeta (1 + \zeta^q)$ if~$q \equiv 1 \mod 4$, and~$g(\zeta) \= \gamma \zeta (1 + \zeta^q + \zeta^{2q})$ if~$q \equiv - 1 \mod 4$.
Then a direct computation shows that~$i_1(g^q) = 3 q$ and~$i_2(g^q) > 7q$.
\end{rema}

The proof of Proposition~\ref{p:minimally ramified} is at the end of this section.

Motivated by Proposition~\ref{p:minimally ramified}, and following the terminology introduced by Laubie, Movahhedi, and Salinier in~\cite{LauMovSal02} in the case~$q = 1$, we make the following definition.

\begin{defi}
\label{d:minimally ramified}
Let~$p$ be a prime number, $k$ a field of characteristic~$p$, $\gamma$ a root of unity in~$k$, and~$q$ the order of~$\gamma$.
Then a power series~$g(\zeta) = \gamma \zeta + \cdots$ in $k[[\zeta]]$ is \emph{minimally ramified}, if for every integer~$n \ge 0$ we have
$$ i_n(g^q) = q \frac{p^{n + 1} - 1}{p - 1}. $$
\end{defi}

To prove Proposition~\ref{p:minimally ramified}, we use several times the following consequence of~\cite[Corollary~$1$]{LauSai98}, see also~\cite[Theorem~$7$]{Kea92} for the case~$q = 1$.
\begin{lemm}
\label{l:laubie saine}
Let~$p$ be a prime number, $k$ a field of characteristic~$p$, $\gamma$ a root of unity, and~$g(\zeta) = \gamma \zeta + \cdots $ a power series in~$k[[\zeta]]$.
If~$p$ is odd (resp.~$p = 2$), then~$g$ is minimally ramified if and only if~\eqref{e:minimally ramified at n} holds with equality for~$n = 0$ and~$n = 1$ (resp.~$n = 0$, $n = 1$, and~$n = 2$).
\end{lemm}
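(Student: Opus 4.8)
The plan is to reduce the lemma to \cite[Corollary~$1$]{LauSai98} (which for $q = 1$ is \cite[Theorem~$7$]{Kea92}), once the quantities $i_n(g^q)$ have been recast as lower ramification numbers. The ``only if'' implication needs no argument: by Definition~\ref{d:minimally ramified}, if $g$ is minimally ramified then $i_n(g^q) = q\frac{p^{n+1}-1}{p-1}$ for \emph{every} $n \ge 0$, in particular for $n = 0, 1$ (and for $n = 2$). So the entire content is the converse.

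For the converse, the first step is to set up the dictionary. Since $\gamma$ has order $q$ with $p \nmid q$, the substitution $\zeta \mapsto g(\zeta)$ is a wildly ramified automorphism of $k[[\zeta]]$ whose tame part has order $q$, and $i_m(g^q) = \ord\big((g^{qp^m}(\zeta) - \zeta)/\zeta\big)$ is, after the harmless shift $i_m(g^q) + 1 = \ord\big(g^{qp^m}(\zeta) - \zeta\big)$, exactly the $m$-th lower ramification number of $g$; thus $\{ i_m(g^q) \}_{m \ge 0}$ is the sequence of ramification breaks studied by Keating and by Laubie--Sa{\"\i}ne. Their classification of such sequences is what provides the rigidity we want: for $p$ odd the sequence is forced once its first two terms are minimal, and for $p = 2$ once its first three are. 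Concretely, I would check that the minimal sequence $I_m \= q\frac{p^{m+1}-1}{p-1}$ is consistent with the recursion furnished by \cite{LauSai98} — equivalently $I_{m+1} - I_m = p(I_m - I_{m-1})$, which is immediate from $I_m - I_{m-1} = qp^m$ — and then, under the hypothesis $i_0(g^q) = q$, $i_1(g^q) = q(p+1)$ (resp., for $p = 2$, also $i_2(g^q) = q(p^2+p+1)$), run the resulting induction on $m$ to conclude $i_m(g^q) = I_m$ for all $m$, i.e.\ that $g$ is minimally ramified. The asymmetry between $p$ odd and $p = 2$ is not an artefact of the argument: the power series of Remark~\ref{r:minimally ramified} has $i_0(g^q)$ and $i_1(g^q)$ minimal but $i_2(g^q)$ not, so when $p = 2$ the extra datum at $n = 2$ genuinely cannot be dropped.

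The only real difficulty I anticipate is bookkeeping: aligning the indexing and the lower-versus-upper numbering conventions of \cite{LauSai98} with the power-series quantities $i_n(g^q)$, keeping track of the ``$+1$'' shift between $\ord$ and ramification number, and checking that the tame twist by $\gamma$ is exactly the one treated there — this last point being precisely why one invokes \cite[Corollary~$1$]{LauSai98} rather than directly \cite[Theorem~$7$]{Kea92}. Beyond this translation, no new dynamical or arithmetic input is required.
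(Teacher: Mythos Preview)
Your proposal is correct and matches the paper's approach exactly: the paper does not give a standalone proof of this lemma but simply presents it as ``a consequence of~\cite[Corollary~$1$]{LauSai98}, see also~\cite[Theorem~$7$]{Kea92} for the case~$q = 1$,'' and your sketch spells out precisely how that reduction goes --- the trivial ``only if'' direction, the translation of $i_n(g^q)$ into the lower ramification numbers treated by Laubie--Sa{\"\i}ne, and the recursion that pins down the whole sequence from its first two (resp.\ three) terms.
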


We also use the following lemma several times, see also~\cite[\emph{Exercice}~$3$, \S IV]{Ser68a} or~\cite[Lemma~$3$]{Kea92} for the case~$q = 1$.

\begin{lemm}
\label{l:keating}
Let~$p$ be a prime number, $k$ a field of characteristic~$p$, $\gamma$ a root of unity in~$k$, and~$q$ the order of~$\gamma$.
Then for each power series
$$ g(\zeta) = \gamma \zeta + \cdots $$
in~$k[[\zeta]]$ and every integer~$n \ge 0$ such that~$i_n(g^q)$ is finite, the following properties hold:
\begin{enumerate}
\item[1.]
If~$i_n(g^q)$ is not divisible by~$p$, then~$i_{n + 1}(g^q) \ge p i_n(g^q) + q$;
\item[2.]
If~$i_n(g^q)$ is divisible by~$p$, then~$i_{n + 1}(g^q) = p i_n(g^q)$.
\end{enumerate}
\end{lemm}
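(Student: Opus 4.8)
The plan is to adapt the classical argument for the case $q = 1$ (Serre~\cite[\emph{Exercice}~$3$, \S IV]{Ser68a}, Keating~\cite[Lemma~$3$]{Kea92}) and to invoke Theorem~\ref{t:higher order sen} to sharpen the resulting bound in part~$1$. Fix $n \ge 0$ with $i \= i_n(g^q)$ finite, and set
$$ u(\zeta) \= g^{q p^n}(\zeta) = \zeta + \beta(\zeta), \qquad \beta(\zeta) = b \zeta^{i + 1} + \cdots, \quad b \neq 0 \text{ in } k. $$
Since $u^p = g^{q p^{n + 1}}$, we have $i_{n + 1}(g^q) = \ord \left( u^p(\zeta) - \zeta \right) - 1$, so everything reduces to estimating $\ord \left( u^p(\zeta) - \zeta \right)$. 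Introduce the substitution operator $\Phi$ on $k[[\zeta]]$, $\Phi(h) \= h \circ u$; it is a $k$-algebra automorphism with $\Phi^\ell(\zeta) = u^\ell$ for all $\ell \ge 1$, and, writing $N \= \Phi - \id$, the operators $\id$ and $N$ commute. Because $p$ is prime, $\binom{p}{j} \equiv 0 \bmod p$ for $1 \le j \le p - 1$, hence in characteristic~$p$
$$ u^p(\zeta) - \zeta = \Phi^p(\zeta) - \zeta = \sum_{j = 1}^{p} \binom{p}{j} N^j(\zeta) = N^p(\zeta). $$

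Next I would analyse $N^p(\zeta)$ by bookkeeping of lowest-degree terms. For $h(\zeta) = c \zeta^r + \cdots$ with $c \neq 0$, expanding $h(\zeta + \beta(\zeta)) - h(\zeta)$ shows $N(h)(\zeta) = c r b \, \zeta^{r + i} + \cdots$ and, in general, $\ord(N(h)) \ge \ord(h) + i$. Applying this $p$ times starting from $h = \zeta$, and checking that choosing at any step either a non-leading term of $\beta$ or a higher Taylor term strictly raises the degree, one sees that the coefficient of $\zeta^{p i + 1}$ in $N^p(\zeta)$ equals $c_p \, b^p$ with
$$ c_p \= \prod_{j = 1}^{p - 1}(j i + 1) \in k, $$
and that no smaller degree occurs. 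If $p \mid i$, then $j i + 1 \equiv 1 \bmod p$ for every $j$, so $c_p \not\equiv 0$; hence $\ord \left( u^p(\zeta) - \zeta \right) = p i + 1$ and $i_{n + 1}(g^q) = p i = p \, i_n(g^q)$, which is part~$2$. If $p \nmid i$, there is exactly one $j_0 \in \{ 1, \dots, p - 1 \}$ with $j_0 i + 1 \equiv 0 \bmod p$, so $c_p \equiv 0$; therefore $\ord \left( u^p(\zeta) - \zeta \right) \ge p i + 2$, that is, $i_{n + 1}(g^q) \ge p i + 1$.

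To finish part~$1$, I would upgrade the bound $i_{n + 1}(g^q) \ge p i + 1$ to $i_{n + 1}(g^q) \ge p i + q$. If $i_{n + 1}(g^q)$ is infinite this is vacuous; otherwise Theorem~\ref{t:higher order sen} gives $q \mid i_{n + 1}(g^q)$, while the same theorem (or a comparison of leading terms in $g \circ u = u \circ g$) gives $q \mid i$, hence $q \mid p i$. Since $i_{n + 1}(g^q)$ is a multiple of $q$ that exceeds $p i$, it is at least $p i + q$, as claimed. The step I expect to demand the most care is the combinatorial identification of the coefficient of $\zeta^{p i + 1}$ in $N^p(\zeta)$ as exactly $c_p b^p$ — independently of the higher coefficients of $\beta$ — together with the elementary but essential observation that $c_p = \prod_{j = 1}^{p - 1}(j i + 1)$ has exactly one factor divisible by $p$ when $p \nmid i$ and none when $p \mid i$; once this is in place, everything else, including the passage from $p i + 1$ to $p i + q$, is routine.
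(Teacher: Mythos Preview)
Your proof is correct and is essentially the same as the paper's. Your operator~$N = \Phi - \id$ satisfies~$N^m(\zeta) = \Delta_m(\zeta)$ in the paper's notation, and your identity~$u^p(\zeta) - \zeta = N^p(\zeta)$ is exactly the paper's observation that~$\Delta_p(\zeta) = g^{qp^{n+1}}(\zeta) - \zeta$; the paper then tracks~$\ord(\Delta_m)$ step by step (isolating the unique index~$\ell$ where the leading coefficient dies), while you package the same information as the single product~$c_p = \prod_{j=1}^{p-1}(ji+1)$, and both finish part~$1$ by invoking Theorem~\ref{t:higher order sen} to pass from~$pi+1$ to~$pi+q$.
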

\begin{proof}
For each integer~$m \ge 1$ define the power series~$\Delta_m(\zeta)$ inductively by~$\Delta_1(\zeta) \= g^{qp^n}(\zeta) - \zeta$, and for~$m \ge 2$ by
$$ \Delta_m(\zeta)
\=
\Delta_{m - 1}(g^{qp^n}(\zeta)) - \Delta_{m - 1}(\zeta). $$
An induction argument shows that
$$ \Delta_m (\zeta)
=
\sum_{j = 0}^m \binom{m}{j} (-1)^{m - j} g^{qp^nj}(\zeta). $$
Taking~$m = p$, we obtain~$\Delta_p(\zeta) = g^{qp^{n + 1}}(\zeta) -
\zeta$.
Noting that~$i \= \ord(\Delta_1)$ is equal to~$i_n(g^q) + 1$, put
$$ \Delta_1(\zeta) = \sum_{j = i}^{+ \infty} a_j \zeta^j, $$
so that~$a_i \neq 0$.

Given an integer~$m \ge 1$, put
$$ o \= \ord(\Delta_m)
\text{ and }
\Delta_m(\zeta) = \sum_{j = o}^{+ \infty} b_j \zeta^j, $$
so that~$b_o \neq 0$.
Then we have
\begin{equation*}
  \begin{split}
    \Delta_{m + 1}(\zeta)
& =
\sum_{j = o}^{+ \infty} b_j \left( \left( \zeta + \Delta_1(\zeta) \right)^j - \zeta^j \right)
\\ & =
\sum_{j = o}^{+ \infty} b_j \left( \zeta^j \left(1 + a_i \zeta^{i - 1} +
    \cdots \right)^j - \zeta^j \right)
\\ & \equiv
b_o a_i o \zeta^{o + i - 1} \mod \left\langle \zeta^{o + i} \right\rangle
  \end{split}
\end{equation*}
in~$k[[\zeta]]$.
It follows that
\begin{equation}
  \label{e:inductive order}
\ord (\Delta_{m + 1}) \ge \ord (\Delta_m) + i_n(g^q),  
\end{equation}
with equality if and only if~$\ord (\Delta_m)$ is not divisible by~$p$.
Since~$\ord(\Delta_1) = i_n(g^q) + 1$, when~$i_n(g^q)$ is divisible
by~$p$ for every integer~$m \ge 1$ we have~$\ord(\Delta_m) = m
i_n(g^q) + 1$.
Taking~$m = p$ and using~$\ord(\Delta_p) = i_{n + 1}(g^q) + 1$, we conclude that~$i_{n + 1}(g^q) = p i_n(g^q)$.
This proves part~$2$.
To prove part~$1$, suppose that~$i_n(g^q)$ is not divisible by~$p$ and that~$i_{n + 1}(g^q)$ is finite.
Let~$\ell$ be the integer in~$\{1, \ldots, p - 1 \}$ such that $\ell
\cdot i_n(g^q) \equiv - 1 \mod \left\langle p \right\rangle$.
Applying~\eqref{e:inductive order} inductively, we obtain that for
every~$m$ in~$\{1, \ldots, \ell \}$ we have~$\ord(\Delta_m) = m
i_n(g^q) + 1$.
Since~$\ell \cdot i_n(g^q) + 1$ is divisible by~$p$,
by~\eqref{e:inductive order} with~$m = \ell$ we
have~$\ord(\Delta_{\ell + 1}) \ge (\ell + 1) i_n(g^q) + 2$.
In the case~$\ell = p - 1$ we obtain~$\ord(\Delta_p) \ge p
i_n(g^q) + 2$.
If~$\ell \neq p - 1$, then using~\eqref{e:inductive order} inductively
we also obtain~$\ord(\Delta_p) \ge p i_n(g^q) + 2$.
So this last inequality holds in all cases.
Using~$\ord(\Delta_p) = i_{n + 1}(g^q) + 1$ and the fact that~$i_{n +
  1}(g^q)$ and~$i_n(g^q)$ are both divisible by~$q$ by
Theorem~\ref{t:higher order sen}, we conclude that~$i_{n + 1}(g^q) \ge
p i_n(g^q) + q$.
This proves part~$1$ and completes the proof of the lemma.
\end{proof}
\begin{proof}[Proof of Proposition~\ref{p:minimally ramified}]
To prove that we have~\eqref{e:minimally ramified at n} for every~$n \ge 0$, we proceed by induction.
The case~$n = 0$ follows from the fact that~$i_0(g^q)$ is divisible by~$q$ when finite, see Theorem~\ref{t:higher order sen}.
Let~$n \ge 0$ be an integer for which~\eqref{e:minimally ramified at n} holds, and suppose that~$i_{n + 1}(g^q)$, and hence~$i_n(g^q)$, is finite.
Using~$i_{n + 1}(g^q) \ge i_n(g^q) + 1$ (\emph{cf.}
Lemma~\ref{l:keating}) and that~$i_{n + 1}(g^q) - i_n(g^q)$ is
divisible by~$qp^{n + 1}$ (Theorem~\ref{t:higher order sen}), we have
$$ i_{n + 1}(g^q)
\ge
i_n(g^q) + q p^{n + 1}
\ge
q \frac{p^{n + 1} - 1}{p - 1} + q p^{n + 1}
=
q \frac{p^{n + 2} - 1}{p - 1}. $$
This completes the proof of the induction step, and that~\eqref{e:minimally ramified at n} holds for every~$n \ge 0$.

To prove the last part of the proposition, suppose~$p$ is odd
(resp.~$p = 2$) and that for some~$n \ge 1$ (resp.~$n \ge 2$) we have~$i_n(g^q) = q \frac{p^{n + 1} - 1}{p - 1}$.
We prove by induction that for every~$\ell$ in~$\{0, \ldots, n \}$ we have~$i_{n - \ell}(g^q) = q \frac{p^{n - \ell + 1} - 1}{p - 1}$.
When~$\ell = 0$ this holds by hypothesis.
Suppose this holds for some~$\ell$ in~$\{0, \ldots, n - 1 \}$.
In particular~$i_{n - \ell}(g^q)$ is not divisible by~$p$, and by
part~$2$ of Lemma~\ref{l:keating} the number~$i_{n - \ell - 1}(g^q)$ is not divisible by~$p$ either.
Thus, by part~$1$ of the same lemma we have
$$ p i_{n - \ell - 1}(g^q) + q \le i_{n - \ell}(g^q) = q \frac{p^{n - \ell + 1} - 1}{p - 1}, $$
and therefore~$i_{n - \ell - 1}(g^q) \le q \frac{p^{n - \ell} - 1}{p - 1}$.
Since we already proved the reverse inequality, we obtain~$i_{n - \ell - 1}(g^q) = q \frac{p^{n - \ell} - 1}{p - 1}$.
This completes the proof of the induction step, and of the fact that for every~$\ell$ in~$\{0, \ldots, n \}$ we have~$i_{n - \ell}(g^q) = q \frac{p^{n - \ell + 1} - 1}{p - 1}$.
Combined with Lemma~\ref{l:laubie saine}, this implies the last part of the proposition.
\end{proof}

\subsection{Almost minimally ramified power series}
\label{ss:almost minimally ramified}
For a ground field of characteristic~$2$, in this section we study those power series that are ``almost minimally ramified'' (Proposition~\ref{p:almost minimally ramified} and Definition~\ref{d:almost minimally ramified}).
We use this and the results in~\S\ref{ss:minimally ramified}, to characterize in arbitrary characteristic the occurrence of~\eqref{e:minimally ramified increment} in terms of (almost) minimally ramified power series (Corollary~\ref{c:minimally ramified increment}).
In turn, this allows us to show that, in some cases, the existence of an optimal cycle implies that the reduction of the map is (almost) minimally ramified (Corollary~\ref{c:optimality implies minimality}).
\begin{prop}
  \label{p:almost minimally ramified}
Let~$k$ be a field of characteristic~$2$, $\gamma$ a root of unity
in~$k$, $q$ the order of~$\gamma$, and~$g(\zeta) = \gamma \zeta +
\cdots$ a power series in~$k[[\zeta]]$.
Then the following properties hold:
\begin{enumerate}
\item[1.]
If~$g$ is not minimally ramified, then for every integer~$n \ge 2$ we have
\begin{equation}
\label{e:almost minimally ramified at n}
i_n(g^q)
\ge
2^{n + 1} q;
\end{equation}
\item[2.]
If equality holds in~\eqref{e:almost minimally ramified at n} for~$n =
0$ or for some~$n \ge 2$, then it holds for every~$n \ge 0$.
\end{enumerate}
\end{prop}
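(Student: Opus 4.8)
The plan is to derive both parts from three facts already established: the lower bound~\eqref{e:minimally ramified at n} and its propagation clause in Proposition~\ref{p:minimally ramified}; the congruence~$i_n(g^q) \equiv i_{n - 1}(g^q) \pmod{q p^n}$ of Theorem~\ref{t:higher order sen}, which also propagates finiteness downward; and the dichotomy of Lemma~\ref{l:keating}. I will use throughout that, since~$k$ has characteristic~$2$ and~$\gamma$ is a root of unity, $q$ is odd, so that for every~$m \ge 1$ the integer~$q 2^m$ is even, and hence so is every multiple of it.

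For part~$1$, fix~$n \ge 2$ and assume~$g$ is not minimally ramified. By Proposition~\ref{p:minimally ramified} we have~$i_n(g^q) \ge q(2^{n + 1} - 1)$, and equality is impossible: by the last assertion of that proposition (the case~$p = 2$, which applies because~$n \ge 2$), equality in~\eqref{e:minimally ramified at n} at our~$n$ would force equality there for every~$n \ge 0$, that is, it would force~$g$ to be minimally ramified. Hence~$i_n(g^q) > q(2^{n + 1} - 1)$. Since~$i_n(g^q)$ is either~$+ \infty$ or, by Theorem~\ref{t:higher order sen}, a multiple of~$q$, this strict inequality upgrades to~$i_n(g^q) \ge q 2^{n + 1}$, which is~\eqref{e:almost minimally ramified at n}.

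For part~$2$, I will use two propagation steps. \emph{Upward:} if~$i_m(g^q) = 2^{m + 1} q$ for some~$m \ge 0$, then~$2^{m + 1} q$ is even, so part~$2$ of Lemma~\ref{l:keating} gives~$i_{m + 1}(g^q) = 2^{m + 2} q$; iterating, $i_{m'}(g^q) = 2^{m' + 1} q$ for all~$m' \ge m$. \emph{Downward:} if~$i_m(g^q) = 2^{m + 1} q$ for some~$m \ge 1$, then by Theorem~\ref{t:higher order sen} the number~$i_{m - 1}(g^q)$ is finite and~$i_{m - 1}(g^q) \equiv i_m(g^q) \equiv 0 \pmod{q 2^m}$; in particular~$i_{m - 1}(g^q)$ is even, so part~$2$ of Lemma~\ref{l:keating} gives~$i_m(g^q) = 2 i_{m - 1}(g^q)$, whence~$i_{m - 1}(g^q) = 2^m q$. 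Now assume equality holds in~\eqref{e:almost minimally ramified at n} for~$n = 0$ or for some~$n \ge 2$. If~$n = 0$, the upward step with~$m = 0$ yields the conclusion. If~$n \ge 2$, applying the downward step successively with~$m = n, n - 1, \ldots, 1$ gives~$i_j(g^q) = 2^{j + 1} q$ for~$0 \le j \le n$, and the upward step with~$m = n$ gives it for~$j \ge n$; hence~\eqref{e:almost minimally ramified at n} holds with equality for every~$n \ge 0$.

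I do not anticipate a serious obstacle: once one observes that ``Sen's congruence plus the Keating dichotomy'' pins down the whole sequence~$\{ i_n(g^q) \}_{n \ge 0}$ as soon as one of its terms equals~$2^{n + 1} q$, and that part~$1$ is essentially a restatement of the propagation clause of Proposition~\ref{p:minimally ramified}, what remains is bookkeeping. The two points that need a little care are the possibility~$i_n(g^q) = + \infty$ in part~$1$ (dispatched immediately, since~$+ \infty \ge q 2^{n + 1}$) and checking that the downward induction in part~$2$ descends all the way to~$i_0(g^q) = 2 q$ --- which it does, since each downward step only requires~$m \ge 1$.
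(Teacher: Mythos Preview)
Your proof is correct and is, in both parts, more streamlined than the paper's.

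For part~$1$, the paper argues first at level~$n = 2$ by a case split on whether~$i_1(g^q) = 3q$ or~$i_1(g^q) > 3q$ (invoking Lemma~\ref{l:laubie saine} to rule out~$i_2(g^q) = 7q$ in the first case), and then propagates to~$n \ge 3$ via Lemma~\ref{l:keating}. Your argument bypasses the case analysis entirely: you use the propagation clause of Proposition~\ref{p:minimally ramified} directly to get the strict inequality~$i_n(g^q) > q(2^{n+1}-1)$ at every~$n \ge 2$, and then the divisibility by~$q$ from Theorem~\ref{t:higher order sen} to round up. This is shorter and avoids Lemma~\ref{l:laubie saine} altogether.

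For part~$2$, the paper's downward argument uses only the inequalities~$i_{m}(g^q) \ge 2 i_{m-1}(g^q)$ from Lemma~\ref{l:keating} to reach~$i_0(g^q) \le 2q$, then splits into the cases~$i_0(g^q) = q$ and~$i_0(g^q) = 2q$, eliminating the first via Lemma~\ref{l:laubie saine}. Your downward step is sharper: from~$i_m(g^q) = 2^{m+1}q$ you extract via Sen's congruence that~$i_{m-1}(g^q)$ is a multiple of~$q2^m$, hence even, and then part~$2$ of Lemma~\ref{l:keating} gives~$i_{m-1}(g^q) = 2^m q$ exactly, with no case analysis. The trade-off is that the paper's route, while longer, makes explicit why the value~$i_0(g^q) = q$ is incompatible with~$i_n(g^q) = 2^{n+1}q$ for~$n \ge 2$ (namely, it would force minimal ramification), whereas in your argument this incompatibility is hidden inside the congruence.
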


The proof of this proposition is at the end of this section.

\begin{defi}
\label{d:almost minimally ramified}
Let~$k$ be a field of characteristic~$2$, $\gamma$ a root of unity in~$k$, and~$q$ the order of~$\gamma$.
Then a power series~$g(\zeta) = \gamma \zeta + \cdots$ in $k[[\zeta]]$ is \emph{almost minimally ramified}, if for every integer~$n \ge 0$ we have
$$ i_n(g^q) = 2^{n + 1} q. $$
\end{defi}

The following is a direct consequence of Proposition~\ref{p:almost minimally ramified}.
\begin{coro}
\label{c:(almost) minimally ramified}
Let~$k$ be a field of characteristic~$2$, $\gamma$ a root of unity in~$k$, and~$q$ the order of~$\gamma$.
If~$g(\zeta) = \gamma \zeta + \cdots$ is a power series in~$k[[\zeta]]$ such that for some integer~$n \ge 2$ we have~$i_n(g^q) \le 2^{n + 1} q$, then~$g$ is either minimally ramified or almost minimally ramified.
\end{coro}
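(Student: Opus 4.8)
The plan is to read the corollary off Proposition~\ref{p:almost minimally ramified} as a clean dichotomy, with essentially no computation. Assume $g(\zeta) = \gamma\zeta + \cdots$ in $k[[\zeta]]$ satisfies $i_n(g^q) \le 2^{n+1}q$ for some integer $n \ge 2$, and fix such an $n$. If $g$ is minimally ramified, then the first alternative in the conclusion holds and there is nothing to prove; so for the rest of the argument I would assume that $g$ is \emph{not} minimally ramified, and deduce that it must then be almost minimally ramified.

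Under the assumption that $g$ is not minimally ramified, part~$1$ of Proposition~\ref{p:almost minimally ramified} applies and yields $i_n(g^q) \ge 2^{n+1}q$ --- in particular at the value of $n$ fixed above. Together with the hypothesis $i_n(g^q) \le 2^{n+1}q$, this forces $i_n(g^q) = 2^{n+1}q$, that is, equality holds in~\eqref{e:almost minimally ramified at n} for this $n \ge 2$. Part~$2$ of the same proposition then promotes this single equality to equality for every $n \ge 0$, which by Definition~\ref{d:almost minimally ramified} says exactly that $g$ is almost minimally ramified. This exhausts the two cases and proves the corollary.

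Since each step is a one-line invocation of the already-established Proposition~\ref{p:almost minimally ramified}, I do not expect any real obstacle here. The only point requiring a little care is the bookkeeping of quantifiers: part~$1$ of the proposition is stated ``for every $n \ge 2$'' whereas the hypothesis of the corollary only supplies ``for some $n \ge 2$'', and one should note that this mismatch is harmless precisely because the argument only ever uses the inequality at the single $n$ provided by the hypothesis.
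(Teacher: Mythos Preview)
Your argument is correct and is exactly the intended reading: the paper states this corollary as ``a direct consequence of Proposition~\ref{p:almost minimally ramified}'' without writing out the steps, and your dichotomy (either $g$ is minimally ramified, or else part~1 forces equality in~\eqref{e:almost minimally ramified at n} at the given $n\ge 2$, whence part~2 yields almost minimal ramification) is precisely how that consequence is meant to be unpacked.
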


The following corollary is a consequence of Propositions~\ref{p:minimally ramified} and~\ref{p:almost minimally ramified}.
\begin{coro}
\label{c:minimally ramified increment}
Let~$p$ be a prime number, and $k$~a field of characteristic~$p$.
Then for every root of unity~$\gamma$ in~$k$ of order~$q$, and every power series~$g(\zeta) = \gamma \zeta + \cdots$ in~$k[[\zeta]]$, the following properties hold.
\begin{enumerate}
\item[1.]
Suppose~$p$ is odd and that for some integer~$n \ge 1$ such that~$i_{n - 1}(g^q)$ is finite, we have~$i_n(g^q) - i_{n - 1}(g^q) = qp^n$.
Then~$g$ is minimally ramified.
\item[2.]
Suppose~$p = 2$ and that for some~$n \ge 2$ such that~$i_{n - 1}(g^q)$ is finite, we have~$i_n(g^q) - i_{n - 1}(g^q) = 2^n q$.
Then~$g$ is either minimally ramified, or almost minimally ramified.
\end{enumerate}
\end{coro}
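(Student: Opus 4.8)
The plan is to derive both parts from Keating's Lemma~\ref{l:keating}, from the lower bound of Proposition~\ref{p:minimally ramified} together with its equality-propagation statement, from the divisibility-by-$q$ assertion of Theorem~\ref{t:higher order sen}, and---for part~$2$---from Corollary~\ref{c:(almost) minimally ramified}. Throughout, write $i_m$ for $i_m(g^q)$; by hypothesis $i_{n-1}$ and $i_n$ are finite and $i_n - i_{n-1} = qp^n$. The organizing observation is that everything turns on whether $i_{n-1}$ is divisible by $p$, since this selects which alternative of Lemma~\ref{l:keating} is in force.

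For part~$1$, with $p$ odd and $n \ge 1$, first suppose $p \nmid i_{n-1}$. Then part~$1$ of Lemma~\ref{l:keating} gives $i_n \ge p\, i_{n-1} + q$; combining this with $i_n = i_{n-1} + qp^n$ yields $i_{n-1} \le q\frac{p^n-1}{p-1}$, and since the reverse inequality is Proposition~\ref{p:minimally ramified}, equality holds, so $i_{n-1} = q\frac{p^n-1}{p-1}$ and hence $i_n = q\frac{p^{n+1}-1}{p-1}$. This is equality in~\eqref{e:minimally ramified at n} for the index $n \ge 1$, so by the last part of Proposition~\ref{p:minimally ramified} equality holds for every $n \ge 0$; that is, $g$ is minimally ramified. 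It remains to exclude the possibility that $p \mid i_{n-1}$. In that case part~$2$ of Lemma~\ref{l:keating} gives $i_n = p\, i_{n-1}$, hence $(p-1) i_{n-1} = qp^n$; writing $i_{n-1} = qm$, which is legitimate because $q \mid i_{n-1}$ by Theorem~\ref{t:higher order sen}, we obtain $(p-1)m = p^n$, which is impossible since $p - 1 \ge 2$ and $\gcd(p-1, p^n) = 1$. So this case does not occur, and part~$1$ follows.

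For part~$2$, with $p = 2$ and $n \ge 2$, I would show that $i_n \le 2^{n+1} q$ in each of the two sub-cases and then appeal to Corollary~\ref{c:(almost) minimally ramified}. If $i_{n-1}$ is odd, part~$1$ of Lemma~\ref{l:keating} gives $i_n \ge 2\, i_{n-1} + q$, hence $i_{n-1} \le q(2^n - 1)$ and $i_n = i_{n-1} + 2^n q \le 2^{n+1}q - q$. If $i_{n-1}$ is even, part~$2$ of Lemma~\ref{l:keating} gives $i_n = 2\, i_{n-1}$, which forces $i_{n-1} = 2^n q$ and $i_n = 2^{n+1} q$. In either sub-case $i_n \le 2^{n+1} q$ with $n \ge 2$, so Corollary~\ref{c:(almost) minimally ramified} shows that $g$ is minimally ramified or almost minimally ramified.

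The step requiring the most care is the exclusion of the case $p \mid i_{n-1}$ in part~$1$: it rests on combining the fact that $i_{n-1}$ is a multiple of $q$ (Theorem~\ref{t:higher order sen}) with the elementary facts that $\gcd(p-1, p^n) = 1$ and that $p - 1 \ge 2$ when $p$ is odd. Dropping the oddness of $p$ makes this case genuinely possible---it is precisely the branch $i_{n-1} = 2^n q$ that shows up in part~$2$, producing an almost minimally ramified series rather than a minimally ramified one---which is exactly why the two residue characteristics must be treated apart.
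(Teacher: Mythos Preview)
Your proof is correct and follows essentially the same approach as the paper: both use Lemma~\ref{l:keating} to split according to whether $p \mid i_{n-1}(g^q)$, invoke Theorem~\ref{t:higher order sen} to rule out the divisible case when $p$ is odd, and then apply Proposition~\ref{p:minimally ramified} (respectively Corollary~\ref{c:(almost) minimally ramified}) to finish. The only differences are cosmetic---you treat the non-divisible case first and split part~$2$ into explicit sub-cases, whereas the paper orders things the other way and in part~$2$ uses the combined inequality $i_{n-1}(g^q) \le i_n(g^q)/2$ in a single line.
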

\begin{proof}
To prove part~$1$, suppose~$p$ is odd and let~$n \ge 1$ be such that~$i_{n - 1}(g^q)$ is finite and~$i_n(g^q) = i_{n - 1}(g^q) + qp^n$.
Suppose by contradiction that~$i_{n - 1}(g^q)$ is divisible by~$p$.
Then by part~$2$ of Lemma~\ref{l:keating} we have~$i_n(g^q) = p i_{n - 1}(g^q)$, so
$$ qp^n = i_n(g^q) - i_{n - 1}(g^q) = (p - 1) i_{n - 1}(g^q). $$
Since~$i_{n - 1}(g^q)$ is divisible by~$q$ (Theorem~\ref{t:higher order sen}), this implies that~$p - 1$ divides~$p^n$.
However, this is not possible because~$p - 1$ is even and~$p^n$ is odd.
We conclude that~$i_{n - 1}(g^q)$ is not divisible by~$p$.
Then part~$1$ of Lemma~\ref{l:keating} implies that~$i_n(g^q) \ge p i_{n - 1}(g^q) + q$, so
$$ i_n(g^q)
=
i_{n - 1}(g^q) + q p^n
\le
(i_n(g^q) - q) / p + q p^n, $$
and~$i_n(g^q) \le q \frac{p^{n + 1} - 1}{p - 1}$.
Then Proposition~\ref{p:minimally ramified} implies that~$g$ is minimally ramified.
This proves part~$1$.

To prove part~$2$, suppose~$p = 2$ and let~$n \ge 2$ be such that~$i_{n - 1}(g^q)$ is finite and~$i_n(g^q) = i_{n - 1}(g^q) + 2^n q$.
By Lemma~\ref{l:keating} we have
$$ i_n(g^q)
=
i_{n - 1}(g^q) + 2^n q
\le
i_n(g^q) / 2 + 2^n q. $$
We thus have~$i_n(g^q) \le 2^{n + 1} q$, and by Corollary~\ref{c:(almost) minimally ramified} the power series~$g$ is either minimally ramified or almost minimally ramified.
\end{proof}

\begin{coro}
\label{c:optimality implies minimality}
Let~$p$, $K$, $\lambda$, and~$q$ be as in Lemma~\ref{l:periodic bound} and let~$n \ge 1$ be an integer and~$P(z) = \lambda z + \cdots $ in~$\OK[z]$ a polynomial having an optimal cycle of period~$qp^n$.
Then the following properties hold:
\begin{enumerate}
\item[1.]
If~$p$ is odd, then~$\tP$ is minimally ramified;
\item[2.]
If~$p = 2$ and~$n \ge 2$, then~$\tP$ is either minimally ramified, or almost minimally ramified.
\end{enumerate}
\end{coro}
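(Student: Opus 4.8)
The plan is to unwind the optimality hypothesis into a statement about the invariants $i_{m}(\tP^q)$ and then invoke Corollary~\ref{c:minimally ramified increment}. Since $|\lambda| = 1$ and $\tlambda$ has order~$q$ in $\tK^*$, the reduction $\tP$ is of the form $\tlambda \zeta + \cdots$ in $\tK[\zeta]$ with $\tlambda$ a root of unity of order~$q$, and by Lemma~\ref{l:admissible periods} the prime~$p$ does not divide~$q$; so the framework of Corollary~\ref{c:minimally ramified increment} is available with $g \= \tP$. First, I would observe that necessarily $\lambda^{qp^n} \neq 1$: if $\lambda^{qp^n} = 1$ the right-hand side of~\eqref{e:periodic bound bis} vanishes, so a periodic point of minimal period~$qp^n$ satisfying~\eqref{e:periodic bound} with equality would have norm~$0$, which is impossible since such a point is nonzero (its minimal period~$qp^n$ exceeds~$1$). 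Thus Lemma~\ref{l:periodic bound}(2) applies, and having an optimal cycle of period~$qp^n$ is precisely the assertion that~\eqref{e:periodic bound bis} holds with equality, equivalently that~\eqref{e:minimal wideg increment} holds:
$$ \wideg\left( \frac{P^{qp^n}(z) - z}{P^{qp^{n - 1}}(z) - z} \right) = qp^n. $$

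The key intermediate step, and the one requiring real care, is to show that $\tP$ is non-linear. Replace~$K$ by a complete and algebraically closed extension; this is harmless, since whether $\tP$ is minimally ramified or almost minimally ramified depends only on the orders $i_m(\tP^q)$ of power series with coefficients in~$\tK$, and these are unaffected. Suppose, for contradiction, that every coefficient~$c_2, \ldots, c_d$ of~$P$ of degree at least~$2$ lies in~$\mK$. If all these coefficients vanish, then $P^{qp^n}(z) - z = (\lambda^{qp^n} - 1)z$ has no zero in $\mK \setminus \{0\}$, so $P$ has no cycle of period~$qp^n$ at all and there is nothing to prove. Otherwise, choose~$t$ in~$K$ with
$$ 1 < |t| = \min \left\{ |c_j|^{-1/(j - 1)} : 2 \le j \le d, \ c_j \neq 0 \right\}; $$
this is possible because each member of the set exceeds~$1$ (as $|c_j| < 1$) and lies in the divisible value group of~$K$. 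Then $Q(w) \= t^{-1} P(tw)$ is again of the form $\lambda w + \cdots$ and lies in $\OK[w]$, and a periodic point~$z_0$ of~$P$ of minimal period~$qp^n$ in $\mK \setminus \{0\}$ gives a periodic point $w_0 \= t^{-1} z_0$ of~$Q$ of the same minimal period, lying in $\mK \setminus \{0\}$, with $|w_0| = |z_0|/|t|$. Applying Lemma~\ref{l:periodic bound}(2) to~$Q$ gives $|w_0|^{qp^n} \ge \left| \frac{\lambda^{qp^n} - 1}{\lambda^{qp^{n - 1}} - 1} \right|$, and hence
$$ |z_0|^{qp^n} = |t|^{qp^n} \, |w_0|^{qp^n} \ge |t|^{qp^n} \left| \frac{\lambda^{qp^n} - 1}{\lambda^{qp^{n - 1}} - 1} \right| > \left| \frac{\lambda^{qp^n} - 1}{\lambda^{qp^{n - 1}} - 1} \right|, $$
so $z_0$ is not optimal --- contradicting the hypothesis. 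Therefore $\deg \tP \ge 2$.

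It remains to put the pieces together. Since $\deg \tP \ge 2$, the polynomial $\tP^{qp^{n-1}}$ has degree $(\deg \tP)^{qp^{n-1}} \ge 2$, so $\tP^{qp^{n-1}}(\zeta) - \zeta$ is nonzero and $i_{n-1}(\tP^q) = \wideg\bigl( P^{qp^{n-1}}(z) - z \bigr) - 1$ is finite. The identity recorded at the beginning of~\S\ref{s:minimally ramified} then yields
$$ \wideg\left( \frac{P^{qp^n}(z) - z}{P^{qp^{n - 1}}(z) - z} \right) = i_n(\tP^q) - i_{n-1}(\tP^q), $$
so, combined with the first display, $i_n(\tP^q) - i_{n-1}(\tP^q) = qp^n$ with $i_{n-1}(\tP^q)$ finite. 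Now Corollary~\ref{c:minimally ramified increment}(1) gives that $\tP$ is minimally ramified when~$p$ is odd, and Corollary~\ref{c:minimally ramified increment}(2) gives that $\tP$ is minimally ramified or almost minimally ramified when $p = 2$ and $n \ge 2$, which is the assertion of the corollary. Apart from ruling out $\deg \tP = 1$ in the middle paragraph --- which is where the actual work lies --- the proof is a direct chain of applications of Lemma~\ref{l:periodic bound}, the identity from~\S\ref{s:minimally ramified}, and Corollary~\ref{c:minimally ramified increment}.
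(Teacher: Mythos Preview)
Your proof is correct and follows essentially the same approach as the paper's: reduce to Corollary~\ref{c:minimally ramified increment} via Lemma~\ref{l:periodic bound}, with the only substantive step being to rule out $\deg \tP = 1$ by a rescaling argument. Your treatment of that step is in fact more careful than the paper's --- you separate out the genuinely linear case and specify the scaling factor~$t$ explicitly, whereas the paper's choice of~$\mu \in \mK \setminus \{0\}$ is terse to the point of being slightly ambiguous about the direction of scaling --- but the underlying idea is identical.
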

\begin{proof}
If the reduction of~$P$ is non-linear, then for every integer~$n$ the
order~$i_n(\tP^q)$ is finite, and therefore the assertions are direct consequences of Lemma~\ref{l:periodic bound} and Corollary~\ref{c:minimally ramified increment}.
Thus, to complete the proof of the corollary we just need to show that the reduction of~$P$ is non-linear.
Suppose by contradiction this is not the case.
Extending~$K$ if necessary, we assume it is algebraically closed.
Then there is~$\mu$ in~$\mK \setminus \{ 0 \}$ such that the polynomial~$Q(w) \= \mu^{-1} P(\mu w)$ is in~$\OK[w]$.
Note that the map~$M_\mu(w) = \mu w$ maps the periodic points of~$Q$ to those of~$P$ preserving minimal periods.
Thus, applying Lemma~\ref{l:periodic bound} to~$Q$, we conclude that~$P$ cannot have an optimal cycle.
This contradiction proves that the reduction of~$P$ is non-linear and completes the proof of the corollary.
\end{proof}
\begin{proof}[Proof of Proposition~\ref{p:almost minimally ramified}]
To prove~\eqref{e:almost minimally ramified at n} with~$n = 2$, note first that by Proposition~\ref{p:minimally ramified} with~$n = 1$ we have~$i_1(g^q) \ge 3 q$.
Suppose~$i_1(g^q) = 3 q$, and note that by Theorem~\ref{t:higher order sen} we have~$i_0(g^q) = q$, and either~$i_2(g^q) = 7 q$ or~$i_2(g^q) \ge 11 q$.
But we cannot have~$i_2(g^q) = 7 q$, for otherwise Lemma~\ref{l:laubie saine} would imply that~$g$ is minimally ramified.
Thus, $i_2(g^q) \ge 11 q$.
This proves~\eqref{e:almost minimally ramified at n} with~$n = 2$ when~$i_1(g^q) = 3 q$.
If~$i_1(g^q) > 3 q$, then by Theorem~\ref{t:higher order sen} we
have~$i_1(g^q) \ge 4 q$, and by Lemma~\ref{l:keating} we have~$i_2(g^q) \ge 2 i_1(g^q) \ge 8 q$.
This proves that in all the cases we have~\eqref{e:almost minimally ramified at n} with~$n = 2$.
For~$n \ge 3$ inequality~\eqref{e:almost minimally ramified at n} is
then obtained by applying Lemma~\ref{l:keating} inductively.
This completes the proof of part~$1$.

To prove part~$2$, suppose~$i_0(g^q) = 2q$.
Applying Lemma~\ref{l:keating} repeatedly we conclude that for every~$n \ge 1$
we have~$i_n(g^q) = 2^{n + 1}q$.
Suppose now that for some~$n \ge 2$ we have~$i_n(g^q) = 2^{n + 1} q$.
If~$n \ge 3$, then applying Lemma~\ref{l:keating} repeatedly we obtain
\begin{multline}
\label{e:(almost) minimally ramified}
i_2(g^q) \le i_n(g^q) / 2^{n - 2} = 8q,
i_1(g^q) \le i_2(g^q) /2 \le 4 q,
\\ \text{ and }
i_0(g^q) \le i_1(g^q) / 2 \le 2q.
\end{multline}
Together with Theorem~\ref{t:higher order sen}, this implies either~$i_0(g^q) = q$ or~$i_0(g^q) = 2q$.
In the latter case we obtain the desired conclusion by applying part~$2$ of Lemma~\ref{l:keating} repeatedly.
It remains to consider the case~$i_0(g^q) = q$.
Since~$i_1(g^q) \le 4q$ and~$i_2(g^q) \le 8q$, by Theorem~\ref{t:higher order sen} we must have~$i_1(g^q) = 3q$ and~$i_2(g^q) = 7 q$.
However, by Lemma~\ref{l:laubie saine} this implies that~$g$ is minimally ramified.
We thus obtain a contradiction that completes the proof of part~$2$
and of the proposition.
\end{proof}

\section{Characterizing minimally ramified power series}
\label{s:characterization of minimally ramified}
In this section we give a characterization of minimally ramified power
series (Theorem~\ref{t:characterization of minimally ramified}).
This characterization is best expressed in terms of the iterative
residue, which is a conjugacy invariant introduced by {\'E}calle in
the complex setting, see~\cite{Eca75}.
We define this invariant for a restricted class of power series that is sufficient for our purposes.

Let~$p$ be a prime number and $k$ a field of characteristic~$p$.
Denote by~$\sK_k$ the set of power series~$g(\zeta)$ in~$k[[\zeta]]$ satisfying~$g(0) = 0$ and~$g'(0) \neq 0$.
It is a group under composition.
We say that~$2$ power series~$g(\zeta)$ and~$\whg(\zeta)$ in~$\sK_k$
are \emph{conjugated} if there is a power series~$h(\zeta)$ in~$\sK_k$ such
that~$\whg(\zeta) = h \circ g \circ h^{-1}(\zeta)$.
Note that in this case we have~$\whg'(0) = g'(0)$.
Moreover, if~$\gamma \= g'(0)$ is a root of unity and we denote by~$q$
its order, then for every integer~$n \ge 0$ we have~$i_n(g^q) = i_n
(\whg^q)$.\footnote{In fact, $i_n(g^q) + 1$ is equal to the
  multiplicity of~$\zeta = 0$ as a fixed point of~$g^{qp^n}(\zeta)$,
  and this is clearly invariant under conjugacy.}
In particular, minimal ramification is invariant under conjugacy.

Let~$\gamma$ be a root of unity in~$k$, let~$q$ be its order, and
let~$g(\zeta)$ be a power series in~$k[[\zeta]]$ satisfying~$g'(0) = \gamma$.
In the case~$\gamma = 1$, so that~$q = 1$, put
$$ g(\zeta)
=
\zeta \left( 1 + a_1 \zeta + a_2 \zeta^2 + \cdots \right), $$
and assume~$a_1 \neq 0$.
Then the \emph{iterative residue $\resit(g)$\footnote{We use {\'E}calle's notation ``$\resit$'', which is an abbreviation of the French term ``\emph{r{\'e}sidue it{\'e}ratif}'' corresponding to ``iterative residue''.} of~$g$} is
\begin{equation*}
\resit(g) \= 1 - \frac{a_2}{a_1^2}.
\end{equation*}
Note that the condition~$a_1 \neq 0$ is equivalent to~$i_0(g) = 1$.
To define the iterative residue in the case~$\gamma \neq 1$, so
that~$q \ge 2$, we use the fact that~$g(\zeta)$ is conjugated to a power series of the form
$$ \whg(\zeta)
=
\gamma \zeta \left( 1 + \sum_{j = 1}^{+ \infty} a_j \zeta^{j q} \right), $$
see Proposition~\ref{p:basic normal form}.
In general, the power series~$\whg$ is not uniquely determined by~$g$.
To define the ``iterative residue'' of~$g$ we restrict to the case when~$a_1
\neq 0$.
This last condition is equivalent to~$i_0(g^q) = q$ (Proposition~\ref{p:basic normal form}), so it only
depends on~$g$.
When this condition is satisfied, the quotient~$a_2/a_1^2$ only
depends on~$g$ (Proposition~\ref{p:basic normal form}) and we
define the \emph{iterative residue of~$g$} by
\begin{equation*}
\resit(g) \= \frac{q + 1}{2} - \frac{a_2}{a_1^2}.
\end{equation*}
Note that in the case~$p = 2$ the number~$q$ is odd, so the
quotient~$\frac{q + 1}{2}$ is an integer and it thus represents an element
of~$k$.
Note also that in this case we have~$q = 1$ in~$k$.
\begin{theoalph}
  \label{t:characterization of minimally ramified}
Let~$p$ be a prime number, $k$ a field of characteristic~$p$, and~$\gamma$
a root of unity in~$k$.
Moreover, let~$q$ be the order of~$\gamma$ and let~$g(\zeta)$ be a
power series in~$k[[\zeta]]$ of the form
$$ g(\zeta) = \gamma \zeta + \cdots . $$
If~$p$ is odd (resp.~$p = 2$), then~$g$ is minimally ramified if and
only if
$$ i_0(g^q) = q
\text{ and }
\resit(g) \neq 0 $$
$$ \left( \text{resp. } i_0(g^q) = q,
\resit(g) \neq 0,
\text{ and }
\resit(g) \neq 1 \right). $$
\end{theoalph}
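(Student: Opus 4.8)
Since $i_n(g^q)$ and $\resit(g)$ are invariant under conjugacy, I may assume that $g$ equals its normal form $\gamma\zeta\bigl(1 + \sum_{j \ge 1} a_j \zeta^{jq}\bigr)$ from Proposition~\ref{p:basic normal form}. By Lemma~\ref{l:laubie saine}, $g$ is minimally ramified if and only if~\eqref{e:minimally ramified at n} holds with equality for $n = 0$ and $n = 1$ when $p$ is odd, respectively for $n = 0$, $n = 1$ and $n = 2$ when $p = 2$. The case $n = 0$ is the condition $i_0(g^q) = q$, which by Proposition~\ref{p:basic normal form} is equivalent to $a_1 \ne 0$; this is the first listed condition, and it is exactly the hypothesis under which $\resit(g)$ is defined, so I assume it from now on. It then remains to prove: when $p$ is odd, $i_1(g^q) = q(p+1)$ if and only if $\resit(g) \ne 0$; and when $p = 2$, $i_1(g^q) = 3q$ if and only if $\resit(g) \ne 0$, and, granting that, $i_2(g^q) = 7q$ if and only if $\resit(g) \ne 1$.

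\textbf{Reduction to $q = 1$.} Because $g$ is in normal form, $g(\zeta)^q$ is a power series in $\zeta^q$; write $g(\zeta)^q = \psi(\zeta^q)$, so $\psi(u) = u + q a_1 u^2 + \cdots$. Composing this identity with itself gives $g^m(\zeta)^q = \psi^m(\zeta^q)$ for all $m$, and therefore, with $\Psi = \psi^q$ (a series tangent to the identity, $\Psi(u) = u + q^2 a_1 u^2 + \cdots$), one has $g^{qm}(\zeta)^q = \Psi^m(\zeta^q)$. Since $p$ does not divide $q$, the factorization $g^{qp^n}(\zeta)^q - \zeta^q = \bigl(g^{qp^n}(\zeta) - \zeta\bigr)\bigl(q\zeta^{q-1} + \cdots\bigr)$ then yields $i_n(g^q) = q\, i_n(\Psi)$ for every $n \ge 0$, and expanding the normal form gives $\resit(g) = q^2 \resit(\Psi)$, where $\resit(\Psi) = 1 - b_2/b_1^2$ for $\Psi(u) = u + b_1 u^2 + b_2 u^3 + \cdots$. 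As $q^2 \ne 0$ in $k$, these two identities reduce the statement to the case $q = 1$: we may assume $g(u) = u + \alpha_1 u^2 + \alpha_2 u^3 + \cdots$ with $\alpha_1 \ne 0$ and $\resit(g) = 1 - \alpha_2/\alpha_1^2$. (When $p = 2$ the number $q$ is odd, hence $q = 1$ in $k$, so the condition $\resit(g) = 1$ is preserved by this reduction.)

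\textbf{The computation.} Conjugating by $u \mapsto \alpha_1 u$ we may take $\alpha_1 = 1$, so $g(u) = u + u^2 + \alpha_2 u^3 + \cdots$ and $\resit(g) = 1 - \alpha_2$. Since $i_0(g) = 1$ is not divisible by $p$, part~1 of Lemma~\ref{l:keating} gives $i_1(g) \ge p + 1$; moreover, tracing the orders of the power series $\Delta_m$ in the proof of that lemma, one sees that $i_1(g) = p + 1$ exactly when the coefficient of $u^{p+2}$ in $g^p(u) - u$ is nonzero. This coefficient depends only on $\alpha_1, \dots, \alpha_{p+1}$; conjugating successively by the maps $u + c_i u^i$ for $i = 3, \dots, p+1$ — each of which alters the coefficient of $u^{i+1}$ by $(i-2)\alpha_1 c_i$ and fixes the coefficients of $u^2, \dots, u^i$, and $i - 2$ is invertible modulo $p$ for every such $i$ — we may therefore assume $g$ is the cubic $u + u^2 + \alpha_2 u^3$. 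Iterating this cubic $p$ times one finds that the coefficient of $u^{p+2}$ in $g^p(u) - u$ equals a nonzero scalar times $1 - \alpha_2 = \resit(g)$; this settles the case $p$ odd. For $p = 2$, the same argument applied to the coefficient of $u^4$ in $g^2(u) - u$ (which in fact equals $1 + \alpha_2 = \resit(g)$) gives $i_1(g) = 3$ if and only if $\resit(g) \ne 0$; assuming this, $g^2(u) = u + c u^4 + \cdots$ with $c \ne 0$, so $i_0(g^2) = 3$ is not divisible by $2$, and Lemma~\ref{l:keating} forces $i_2(g) = \ord\bigl((g^4(u) - u)/u\bigr) \ge 7$, with equality exactly when the coefficient of $u^8$ in $g^4(u) - u$ is nonzero; after using conjugations to remove the removable intermediate coefficients this becomes a finite calculation whose outcome is that $i_2(g) = 7$ if and only if $\alpha_2 \ne 0$, i.e. if and only if $\resit(g) \ne 1$. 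Together with Lemma~\ref{l:laubie saine}, this finishes the proof.

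\textbf{The main obstacle.} The crux is the explicit iteration in the last step: one has to verify simultaneously that the ``removable'' coefficients genuinely disappear from the leading term of $g^p(u) - u$ — this is what legitimizes reducing $g$ to a cubic, and, for $p = 2$ at level $2$, to finitely many coefficients — and that the coefficient that does survive is, up to a nonzero unit of $k$, exactly $\resit(g)$. The level-$2$ calculation for $p = 2$ is the most delicate point, since $g^4(u) - u$ must be expanded to degree $8$ and one must use the forced vanishing of its coefficients in degrees $5$, $6$, $7$ (a consequence of Lemma~\ref{l:keating}) to see that the coefficient of $u^8$ depends on $\alpha_2$ alone; this is the source of the extra hypothesis $\resit(g) \ne 1$.
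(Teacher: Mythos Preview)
Your strategy is the paper's: pass to normal form (Proposition~\ref{p:basic normal form}), reduce to $q=1$ via the semiconjugacy $\zeta\mapsto\zeta^q$, then carry out an explicit computation for series tangent to the identity (the paper's Proposition~\ref{p:concrete minimally ramified}). Your $\psi$ is exactly the paper's $\widehat g$; taking $\Psi=\psi^q$ instead is a harmless variant, and your identity $\resit(g)=q^2\resit(\Psi)$ is correct and makes the translation of the conditions on $\resit$ explicit, whereas the paper leaves that step to the reader.

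There is, however, a real gap in your $p=2$ argument at level~$2$. Your conjugation trick---shift by $u\mapsto u+c_iu^i$ to kill the coefficient of $u^{i+1}$---requires $i-2$ to be a unit in~$k$. For odd~$p$ this holds for every $i\in\{3,\dots,p+1\}$, so you legitimately reduce $g$ to the cubic $u+u^2+\alpha_2u^3$ modulo $u^{p+3}$, and the remaining iteration is a routine (if unperformed) calculation. But for $p=2$ and $i=4,6$ one has $i-2\equiv 0\pmod 2$, so the coefficients of $u^5$ and $u^7$ cannot be conjugated away. Your assertion that ``the coefficient of $u^8$ depends on $\alpha_2$ alone'' therefore still hides a computation you do not carry out: one must check that the surviving coefficients drop out. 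The paper handles this by computing directly with the recursion $\Delta_{m+1}=\Delta_m\circ g-\Delta_m$: writing $g^2(\zeta)-\zeta\equiv A\zeta^4+B\zeta^5+C\zeta^6+D\zeta^7+E\zeta^8\bmod\zeta^9$ with $A=a_1(a_2+a_1^2)$ and $B=a_2(a_2+a_1^2)$, one finds $g^4(\zeta)-\zeta\equiv AB\,\zeta^8\bmod\zeta^9$ \emph{regardless of $C,D,E$}. That cancellation is exactly what you need, and it does not follow from your conjugation argument alone.
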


When~$p$ is odd and~$q = 1$, Theorem~\ref{t:characterization of
  minimally ramified} is~\cite[\emph{Exemple}~$3.19$]{Rivthese}, phrased in terms of the iterative residue.

A direct consequence of Theorem~\ref{t:characterization of
  minimally ramified} is that for every integer~$q$ and every root of
unity~$\gamma$ of order~$q$ in a field of positive characteristic~$k$, there is a minimally ramified polynomial~$g(\zeta) = \gamma \zeta + \cdots$ in~$k[\zeta]$ of degree~$q + 1$ or~$2q + 1$.
This is exploited in~\S\ref{s:optimality}.

The proof of Theorem~\ref{t:characterization of minimally ramified} is
given in~\S\ref{ss:proof of characterization of minimally ramified},
after showing in~\S\ref{ss:conjugacy classes} the results needed to define the iterative residue.

\subsection{Conjugacy classes}
\label{ss:conjugacy classes}
The purpose of this section is to prove the following proposition that
was used above to define the iterative residue.
\begin{prop}
\label{p:basic normal form}
Let~$p$ be a prime number, $k$ a field of characteristic~$p$, $\gamma$
a root of unity in~$k$, and~$q$ the order of~$\gamma$.
Then every power series~$g(\zeta)$ in~$k[[\zeta]]$ satisfying~$g(\zeta) = \gamma \zeta + \cdots$ is conjugated to a power series of the form
$$ \whg(\zeta)
=
\gamma \zeta \left( 1 + \sum_{j = 1}^{+ \infty} a_j \zeta^{jq}
\right). $$
Moreover, we have~$a_1 \neq 0$ if and only if~$i_0(g^q) = q$, and in
this case the quotient~$a_2 / a_1^2$ depends only on~$g$.
\end{prop}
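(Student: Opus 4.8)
The plan is to take the three assertions in turn: produce $\whg$ by an inductive change of coordinates, identify the condition $a_1\neq 0$ by computing a single coefficient of an iterate, and prove the well‑definedness of $a_2/a_1^2$ by a short comparison of coefficients. First I would write $g(\zeta)=\sum_{n\geq 1}c_n\zeta^n$ with $c_1=\gamma$ and build $\whg$ as a limit of conjugates $g=g^{(1)},g^{(2)},\dots$, where $g^{(m)}$ has vanishing coefficient at every degree in $\{2,\dots,m-1\}$ that is $\not\equiv 1\bmod q$. To pass from $g^{(m)}$ to $g^{(m+1)}$: if $m\equiv 1\bmod q$ put $g^{(m+1)}:=g^{(m)}$; otherwise $\gamma^{m-1}\neq 1$ because $\gamma$ has order exactly $q$, and conjugating $g^{(m)}$ by $h_m(\zeta):=\zeta+b_m\zeta^m$ replaces its degree‑$m$ coefficient by $c^{(m)}_m-b_m\gamma(1-\gamma^{m-1})$ and leaves all lower‑degree coefficients untouched, so a unique $b_m$ kills it. The composition $h:=\lim_m h_m\circ\cdots\circ h_2$ and the limit $\whg:=\lim_m g^{(m)}$ converge for the $\zeta$‑adic topology, $\whg=h\circ g\circ h^{-1}$, and $\whg$ has only resonant terms, i.e. $\whg(\zeta)=\gamma\zeta\bigl(1+\sum_{j\geq 1}a_j\zeta^{jq}\bigr)$; equivalently $\whg$ commutes with the rotation $\zeta\mapsto\gamma\zeta$.

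For the meaning of $a_1$, put $A:=\gamma a_1$, so that $\whg(\zeta)=\gamma\zeta+A\zeta^{q+1}+(\text{degree}\geq 2q+1)$. Since $\whg$ commutes with $\zeta\mapsto\gamma\zeta$, every iterate $\whg^m$ does too, and one checks that the degree‑$(q+1)$ coefficient $c_m$ of $\whg^m$ satisfies $c_{m+1}=\gamma c_m+A\gamma^m$ — here $\gamma^q=1$ is used to reduce $\gamma^{m(q+1)}$ to $\gamma^m$ — whence $c_m=mA\gamma^{m-1}$ and $c_q=qA\gamma^{q-1}=qa_1$. Thus $\whg^q(\zeta)=\zeta+qa_1\zeta^{q+1}+(\text{degree}\geq 2q+1)$, and as $q$ is invertible in $k$ (because $p\nmid q$) we get $i_0(\whg^q)=\ord\bigl((\whg^q(\zeta)-\zeta)/\zeta\bigr)=q$ when $a_1\neq 0$ and $i_0(\whg^q)>q$ otherwise. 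Since $i_0(g^q)$ is a conjugacy invariant, this proves $a_1\neq 0$ if and only if $i_0(g^q)=q$.

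Finally, assume $i_0(g^q)=q$ and let $\whg_1(\zeta)=\gamma\zeta(1+a_1\zeta^q+a_2\zeta^{2q}+\cdots)$ and $\whg_2(\zeta)=\gamma\zeta(1+b_1\zeta^q+b_2\zeta^{2q}+\cdots)$ be two normal forms of $g$; then $a_1,b_1\neq 0$ by the previous step and $\whg_2=h\circ\whg_1\circ h^{-1}$ for some $h\in\sK_k$. Conjugating $\whg_2$ by a scaling $\zeta\mapsto\nu\zeta$ with $\nu^q=b_1/a_1$ replaces $(b_1,b_2)$ by $(a_1,b_2(a_1/b_1)^2)$ and leaves $b_2/b_1^2$ unchanged, so (modifying $h$ accordingly) I may assume $b_1=a_1$. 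Now I would compare the coefficients of $h\circ\whg_1=\whg_2\circ h$ degree by degree, writing $h(\zeta)=\mu\zeta+\sum_{n\geq 2}h_n\zeta^n$. Degree $q+1$ gives $\mu a_1=b_1\mu^{q+1}$, hence $\mu^q=1$; since a rotation by a $q$‑th root of unity commutes with $\whg_2$, I may replace $h$ by $\mu^{-1}h$ and assume $\mu=1$. With $\mu=1$, the comparisons in degrees $2,\dots,q$ force $h_2=\dots=h_q=0$ (each through the factor $\gamma^{m-1}-1\neq 0$), degree $q+1$ leaves $h_{q+1}$ free, degrees $q+2,\dots,2q$ force $h_{q+2}=\dots=h_{2q}=0$ (the contributions of the already‑vanishing $h_2,\dots,h_q$ dropping out), and the degree‑$(2q+1)$ comparison, after cancelling the terms carrying $h_{2q+1}$ and $h_{q+1}$, reduces to $\gamma a_2=\gamma b_2$, i.e. $a_2=b_2$. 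Undoing the scaling gives $a_2/a_1^2=b_2/b_1^2$, as asserted.

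The first two steps are routine; the main obstacle is the bookkeeping in the last step. One must pin down which degrees are resonant (those $\equiv 1\bmod q$), verify that at each non‑resonant degree $m$ the unknown $h_m$ enters precisely through the invertible factor $\gamma^{m-1}-1$, and check that the coefficients $a_2$ and $b_2$ first interact only at degree $2q+1$ — by which point $h_2,\dots,h_q$ and $h_{q+2},\dots,h_{2q}$ have all been shown to vanish, so that their potentially interfering contributions are absent and the comparison collapses to $a_2=b_2$.
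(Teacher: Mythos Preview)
Your proposal is correct and follows the same three-part strategy as the paper: inductively kill non-resonant coefficients to reach the normal form, compute the degree-$(q+1)$ coefficient of $\whg^q$ to link $a_1\neq 0$ with $i_0(g^q)=q$, and compare $h\circ\whg_1=\whg_2\circ h$ term by term through degree $2q+1$ to extract the invariance of $a_2/a_1^2$. The only deviation is cosmetic: in the last step you first scale so that $b_1=a_1$ (this needs a $q$-th root of $b_1/a_1$, so implicitly works over an extension of $k$) and then normalize to $\mu=1$, whereas the paper keeps $\lambda=h'(0)$ arbitrary, reads off $\lambda^q=A/\whA$ at degree $q+1$, and substitutes this relation at the end---both routes land on the same identity.
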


The proof of this proposition depends on a couple of lemmas.

The following lemma is stated in a stronger form than what is needed
for the proof of Proposition~\ref{p:basic normal form}; it is used in
the proofs of Propositions~\ref{p:odd optimality} and~\ref{p:even
  periodic optimality}.
\begin{lemm}
\label{l:q iterate}
Let~$p$ be a prime number, $k$ a field of characteristic~$p$, and~$q \ge 2$ an integer that is not divisible by~$p$.
Given~$\gamma$ in~$k^*$ satisfying~$\gamma^q = 1$, and~$a_1$ and~$a_2$ in~$k$, let~$g(\zeta)$ be a power series in~$k[[\zeta]]$ satisfying
$$ g(\zeta)
\equiv
\gamma \zeta \left(1 + a_1 \zeta^q + a_2 \zeta^{2q} \right)
\mod \left\langle \zeta^{2q + 2} \right\rangle $$
in~$k[[\zeta]]$.
Then  for every integer~$\ell \ge 1$, we have
\begin{equation}
  \label{e:iterated q series}
g^\ell(\zeta)
\equiv
\gamma^{\ell} \zeta \left( 1 + \ell a_1 \zeta^q + \left( \ell a_2 + (q + 1) \frac{\ell(\ell - 1)}{2} a_1^2 \right) \zeta^{2q} \right)
\mod \left\langle \zeta^{2q + 2} \right\rangle.
\end{equation}
\end{lemm}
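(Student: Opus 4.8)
The plan is to prove the congruence~\eqref{e:iterated q series} by induction on~$\ell$, the case~$\ell = 1$ being the hypothesis. It is convenient to set, for each~$\ell \ge 1$,
\[
v_\ell(\zeta) \= 1 + \ell a_1 \zeta^q + c_\ell \zeta^{2q},
\qquad
c_\ell \= \ell a_2 + (q + 1)\binom{\ell}{2} a_1^2,
\]
so that~\eqref{e:iterated q series} reads~$g^\ell(\zeta) \equiv \gamma^\ell \zeta\, v_\ell(\zeta) \mod \langle \zeta^{2q + 2} \rangle$, and to record the hypothesis in the form~$g(\zeta) = \gamma \zeta + a_1 \gamma \zeta^{q + 1} + a_2 \gamma \zeta^{2q + 1} + R(\zeta)$ with~$\ord(R) \ge 2q + 2$.

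For the inductive step, assume~\eqref{e:iterated q series} for a given~$\ell \ge 1$ and substitute~$w = g^\ell(\zeta)$ into the above identity for~$g$, so that~$g^{\ell + 1}(\zeta) = \gamma w + a_1 \gamma w^{q + 1} + a_2 \gamma w^{2q + 1} + R(w)$. Since~$\ord(w) = 1$, the term~$R(w)$ has order~$\ge 2q + 2$ and is negligible modulo~$\langle \zeta^{2q + 2} \rangle$; and since~$w \equiv \gamma^\ell \zeta\, v_\ell(\zeta) \mod \langle \zeta^{2q + 2} \rangle$, each power~$w^k$ may be replaced by~$(\gamma^\ell \zeta\, v_\ell(\zeta))^k$ modulo~$\langle \zeta^{2q + 2} \rangle$. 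The decisive point, and the reason the shape~$\gamma \zeta \bigl( 1 + (\text{a series in } \zeta^q) \bigr)$ is stable under iteration, is that~$\gamma^q = 1$ collapses both~$\gamma (\gamma^\ell)^{q + 1}$ and~$\gamma (\gamma^\ell)^{2q + 1}$ to~$\gamma^{\ell + 1}$; hence
\[
g^{\ell + 1}(\zeta)
\equiv
\gamma^{\ell + 1} \zeta \bigl( v_\ell(\zeta) + a_1 \zeta^q\, v_\ell(\zeta)^{q + 1} + a_2 \zeta^{2q}\, v_\ell(\zeta)^{2q + 1} \bigr)
\mod \langle \zeta^{2q + 2} \rangle .
\]
To finish one only needs~$v_\ell^{q + 1}$ modulo~$\langle \zeta^{q + 1} \rangle$, where it equals~$1 + (q + 1)\ell a_1 \zeta^q$ because~$v_\ell$ has no terms in degrees~$1, \dots, q - 1$ and every other contribution to~$v_\ell^{q + 1}$ has degree~$\ge 2q$, together with the constant term~$1$ of~$v_\ell^{2q + 1}$. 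Substituting these and~$v_\ell = 1 + \ell a_1 \zeta^q + c_\ell \zeta^{2q}$ into the display yields~\eqref{e:iterated q series} for~$\ell + 1$, provided~$c_\ell + (q + 1)\ell a_1^2 + a_2 = c_{\ell + 1}$; and this last identity reduces at once to~$\binom{\ell}{2} + \ell = \binom{\ell + 1}{2}$, which closes the induction.

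I do not expect a genuine obstacle here: the argument is a bookkeeping induction. The only point requiring care is to make sure that the unknown tails — the remainder~$R$ of~$g$ and the terms of degree~$\ge 2q + 2$ implicit in~$g^\ell$ — contribute nothing after the substitution. Writing~$w = \gamma^\ell \zeta\, v_\ell(\zeta) + (\text{order} \ge 2q + 2)$ and working with~$v_\ell$ only modulo suitably low powers of~$\zeta$, rather than expanding~$w^{q + 1}$ and~$w^{2q + 1}$ head-on, keeps this transparent. A secondary point is the systematic use of~$\gamma^q = 1$ to reduce all the powers of~$\gamma$ to~$\gamma^{\ell + 1}$; beyond this identity, the hypotheses that~$q \ge 2$ and that~$p$ does not divide~$q$ are not used in the present computation.
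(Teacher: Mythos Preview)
Your proof is correct and follows essentially the same inductive strategy as the paper; the only cosmetic difference is that you compute $g^{\ell+1} = g \circ g^{\ell}$ whereas the paper computes $g^{\ell+1} = g^{\ell} \circ g$, which leads to the same coefficient identities. Your closing remark is also accurate: the hypotheses $q \ge 2$ and $p \nmid q$ play no role in this particular computation (the paper's ``using $q \ge 2$'' is not actually needed beyond $q \ge 1$).
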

\begin{proof}
We proceed by induction.
When~$\ell = 1$ the congruence~\eqref{e:iterated q series} holds by definition of~$g$.
Let~$\ell \ge 1$ be an integer for which~\eqref{e:iterated q series} holds.
Then, using~$q \ge 2$, we have
\begin{multline*}
  \begin{aligned}
g^{\ell + 1}(\zeta)
& \equiv
\gamma^\ell \left[ \gamma \zeta \left( 1 + a_1 \zeta^q + a_2 \zeta^{2q} \right) \right]
\\ & 
\cdot \left( 1 + \ell a_1 \zeta^q \left( 1 + a_1 \zeta^q \right)^q
+ \left( \ell a_2 + (q + 1) \frac{\ell(\ell - 1)}{2} a_1^2 \right) \zeta^{2q} \right)
\mod \left\langle \zeta^{2q + 2} \right\rangle
\\ & \equiv
\gamma^{\ell + 1} \zeta \left( 1 + a_1 \zeta^q + a_2 \zeta^{2q} \right)
\\ & 
\cdot \left( 1 + \ell a_1 \zeta^q + \left( \ell a_2 + \left( q \ell + (q +
    1) \frac{\ell(\ell - 1)}{2} \right) a_1^2 \right) \zeta^{2q} \right)
\mod \left\langle \zeta^{2q + 2} \right\rangle
\\ & \equiv
\gamma^{\ell + 1} \zeta \left( 1 + (\ell + 1) a_1 \zeta^q + \left( (\ell + 1) a_2 + (q + 1) \frac{\ell(\ell + 1)}{2} a_1^2 \right) \zeta^{2q} \right)
  \end{aligned}
\\
\mod \left\langle \zeta^{2q + 2} \right\rangle.
\end{multline*}
This proves the induction step, and completes the proof of the lemma.
\end{proof}

\begin{lemm}
\label{l:invariance of resit}
Let~$p$ be a prime number, $k$ a field of characteristic~$p$, $\gamma$
a root of unity in~$k$, and let~$q$ be the order of~$\gamma$. 
Given~$A$ and $\whA$ in~$k^*$ and~$B$ and~$\whB$ in~$k$,
let~$g(\zeta)$ and~$\whg(\zeta)$ be power series in~$k[[\zeta]]$ satisfying
$$ g(\zeta)
\equiv
\gamma \zeta \left( 1 + A \zeta^q + B\zeta^{2q} \right)
\mod \left\langle \zeta^{2q + 2} \right\rangle $$
and
$$ \whg(\zeta) \equiv \gamma \zeta \left( 1 + \whA \zeta^q + \whB
  \zeta^{2q} \right)
\mod \left\langle \zeta^{2q + 2} \right\rangle $$
in~$k[[\zeta]]$.
If~$g(\zeta)$ and~$\whg(\zeta)$ are conjugated, then we have
$$ \frac{B}{A^2} = \frac{\whB}{\whA^2}. $$
\end{lemm}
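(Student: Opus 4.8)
The plan is to work throughout modulo~$\langle \zeta^{2q+2} \rangle$; since every power series that occurs has an invertible linear part, all the compositions below depend only on the truncations of the series modulo this ideal. Write the conjugacy as~$\whg = h \circ g \circ h^{-1}$ with~$h$ in~$\sK_k$, say~$h(\zeta) = c\zeta + \cdots$. Conjugating~$\gamma\zeta(1 + A\zeta^q + B\zeta^{2q})$ by the scaling~$\zeta \mapsto c\zeta$ replaces~$A$ by~$Ac^{-q}$ and~$B$ by~$Bc^{-2q}$, hence leaves~$B/A^2$ unchanged; replacing~$h$ by~$(\zeta \mapsto c^{-1}\zeta)\circ h$ and~$\whg$ by its conjugate under~$\zeta \mapsto c^{-1}\zeta$, which has the same shape and the same value of~$\whB/\whA^2$, we may therefore assume~$h$ is tangent to the identity. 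The crux is then that~$g$ and~$\whg$ are \emph{diagonal} modulo~$\langle \zeta^{2q+2} \rangle$ --- of the form ``$\gamma\zeta$ times a polynomial in~$\zeta^q$'' --- and that this \emph{forces}~$h$ to be diagonal modulo~$\langle \zeta^{2q+2} \rangle$ as well; once that is known, a short computation in the variable~$w = \zeta^q$ finishes the proof.

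\textbf{Forced diagonality of~$h$.}
Set~$\Lambda(\zeta) = \gamma\zeta$; the sparse shapes of~$g$ and~$\whg$ give~$\Lambda \circ g \circ \Lambda^{-1} \equiv g$ and~$\Lambda \circ \whg \circ \Lambda^{-1} \equiv \whg \pmod{\langle \zeta^{2q+2} \rangle}$. Suppose~$h$ has a nonzero coefficient at some degree~$\not\equiv 1 \pmod{q}$, and let~$M$ be the least such degree; then~$2 \le M \le 2q$, since the degree~$2q+1$ is~$\equiv 1 \pmod q$, and by minimality every term of~$h$ of degree below~$M$ is diagonal. Hence we may factor~$h = D \circ E$, with~$D$ diagonal, tangent to the identity, and congruent to~$h$ modulo~$\langle \zeta^{M} \rangle$, and~$E(\zeta) = \zeta + \beta\zeta^{M} + \cdots$ with~$\beta \ne 0$. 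A direct expansion, using that the only nonlinear terms of~$g$ of degree~$\le 2q+1$ sit in degrees~$q+1$ and~$2q+1$ (and that~$M \ne q+1$), gives
\[
E \circ g \circ E^{-1}(\zeta) \equiv g(\zeta) + \beta\gamma\big(\gamma^{M-1} - 1\big)\zeta^{M} \pmod{\langle \zeta^{M+1} \rangle},
\]
and~$\gamma^{M-1} - 1 \ne 0$ because~$M - 1 \not\equiv 0 \pmod q$. Thus~$E \circ g \circ E^{-1}$ coincides with the diagonal series~$g$ in all degrees below~$M$ but has a nonzero coefficient at the nondiagonal degree~$M \le 2q$, so it is not invariant under conjugation by~$\Lambda$ modulo~$\langle \zeta^{2q+2} \rangle$. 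Since~$D$ is diagonal it commutes with~$\Lambda$, and conjugation by the fixed series~$D$ sends distinct classes modulo~$\langle \zeta^{2q+2} \rangle$ to distinct classes; hence~$h \circ g \circ h^{-1} = D \circ (E \circ g \circ E^{-1}) \circ D^{-1}$ is not~$\Lambda$-invariant modulo~$\langle \zeta^{2q+2} \rangle$ either, contradicting~$h \circ g \circ h^{-1} = \whg$. Therefore~$h$ is diagonal modulo~$\langle \zeta^{2q+2} \rangle$.

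\textbf{The diagonal computation.}
Write, modulo~$\langle \zeta^{2q+2} \rangle$,
\[
g(\zeta) = \gamma \zeta\, v(\zeta^q), \qquad h(\zeta) = \zeta\, u(\zeta^q), \qquad \whg(\zeta) = \gamma \zeta\, \widehat{v}(\zeta^q),
\]
where, modulo~$\langle w^3 \rangle$, $v(w) = 1 + Aw + Bw^2$, $u(w) = 1 + \beta_1 w + \beta_2 w^2$ and~$\widehat{v}(w) = 1 + \whA w + \whB w^2$. Substituting into the identity~$h \circ g = \whg \circ h$ and using~$\gamma^q = 1$, it becomes the functional equation
\[
v(w)\, u\big(w\, v(w)^q\big) \equiv u(w)\, \widehat{v}\big(w\, u(w)^q\big) \pmod{\langle w^3 \rangle}.
\]
Comparing the coefficients of~$w$ and of~$w^2$ (the contributions of~$\beta_1$ and~$\beta_2$ cancel) gives~$\whA = A$ and~$\whB = B$; in particular~$\whB / \whA^2 = B/A^2$, as claimed.

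\textbf{Main obstacle.}
I expect the forced diagonality of~$h$ to be the delicate point: one must organize the bookkeeping of which monomials can occur in~$h$ and check that the lowest nondiagonal term of~$h$ genuinely leaves a nonzero trace in~$h \circ g \circ h^{-1}$ rather than cancelling out. The scaling reduction and the final functional-equation computation are routine, and when~$q = 1$ there is no diagonality condition, so the second step is vacuous. (Alternatively, one can run the whole argument with~$g$ and~$\whg$ replaced by~$g^q$ and~$\whg^q$, which by Lemma~\ref{l:q iterate} again have the corresponding diagonal shape, but this does not shorten the proof.)
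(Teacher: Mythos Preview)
Your proof is correct and takes a genuinely different route from the paper's argument. The paper proceeds by a single long direct computation: it writes $h(\zeta)=\lambda\zeta(1+\beta_1\zeta+\beta_2\zeta^2+\cdots)$ with no preliminary reduction, expands $h\circ g$ and $\whg\circ h$ modulo $\langle\zeta^{q+1}\rangle$, then $\langle\zeta^{2q+1}\rangle$, then $\langle\zeta^{2q+2}\rangle$, and at each stage reads off from the matched coefficients both the vanishing $\beta_1=\cdots=\beta_{q-1}=0$, $\beta_{q+1}=\cdots=\beta_{2q-1}=0$ and the relations $A=\whA\lambda^q$, $B/A^2=\whB/\whA^2$. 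In other words, the paper \emph{proves} the diagonality of $h$ modulo $\langle\zeta^{2q+2}\rangle$ as a by-product of the same coefficient comparison that yields the final identity.

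You instead separate the two issues cleanly: first a scaling reduction to $h'(0)=1$, then a structural argument using the $\Lambda$-equivariance (with $\Lambda(\zeta)=\gamma\zeta$) to force $h$ to be diagonal modulo $\langle\zeta^{2q+2}\rangle$, and only then a short functional-equation computation in the variable $w=\zeta^q$. Your approach makes the underlying symmetry explicit and keeps the final computation brief (yielding in fact $A=\whA$ and $B=\whB$ after the reduction), at the cost of the extra lemma on forced diagonality; the paper's approach is more elementary and self-contained but involves heavier bookkeeping. Both arrive at the same intermediate fact (the conjugating series must be diagonal to the relevant order), just from opposite directions.
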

\begin{proof}
Let~$\lambda$ be in~$k^*$ and let~$h(\zeta)$ be a power series in~$k[[\zeta]]$ of the form
$$ h(\zeta)
=
\lambda \zeta \left( 1 + \beta_1 \zeta + \beta_2 \zeta^2 +
\cdots \right), $$
such that~$\whg(\zeta) = h \circ g \circ h^{-1}(\zeta)$.
Then we have
\begin{equation*}
h \circ g (\zeta)
\equiv
\lambda \gamma \zeta \left(1 + \beta_1 \gamma \zeta + \cdots +
  \beta_{q - 1} \gamma^{q - 1} \zeta^{q - 1} \right)
\mod \left\langle \zeta^{q + 1} \right\rangle,
\end{equation*}
and on the other hand
\begin{equation*}
\whg \circ h(\zeta)
\equiv
\gamma \lambda \zeta \left( 1 + \beta_1 \zeta + \cdots + \beta_{q - 1}
  \zeta^{q - 1} \right)
\mod \left\langle \zeta^{q + 1} \right\rangle.
\end{equation*}
Comparing coefficients we obtain
$$ \beta_1 = \cdots = \beta_{q - 1} = 0. $$
Therefore we have
\begin{multline*}
  \begin{aligned}
h \circ g (\zeta)
& \equiv
\lambda \gamma \zeta \left( 1 + A \zeta^q \right)
\\ & \quad \cdot
\left(1 + \beta_q \zeta^q + \beta_{q + 1} \gamma \zeta^{q + 1} +
 \cdots + \beta_{2q - 1} \gamma^{q - 1} \zeta^{2q - 1} \right)
\mod \left\langle \zeta^{2q + 1} \right\rangle
\\ & \equiv
\lambda \gamma \zeta
\left(1 + (A + \beta_q) \zeta^q + \beta_{q + 1} \gamma \zeta^{q + 1} +
  \cdots + \beta_{2q - 1} \gamma^{q - 1} \zeta^{2q - 1} \right)
  \end{aligned}
\\
\mod \left\langle \zeta^{2q + 1} \right\rangle.
\end{multline*}
On the other hand
\begin{multline*}
  \begin{aligned}
\whg \circ h(\zeta)
& \equiv
\gamma \lambda \zeta \left( 1 + \beta_q \zeta^q + \cdots + \beta_{2q - 1}
  \zeta^{2q - 1} \right)\left(1 + \whA \lambda^q \zeta^q \right)
\mod \left\langle \zeta^{2q + 1} \right\rangle
\\ & \equiv
\gamma \lambda \zeta \left( 1 + \left(\beta_q + \whA \lambda^q \right)
  \zeta^q + \beta_{q + 1} \zeta^{q + 1} + \cdots + \beta_{2q - 1}
  \zeta^{2q - 1} \right)
  \end{aligned}
\\
\mod \left\langle \zeta^{2q + 1} \right\rangle.
\end{multline*}
Comparing coefficients we obtain
$$ A = \whA \lambda^q
\text{ and }
\beta_{q + 1} = \cdots = \beta_{2q - 1} = 0. $$
In particular we have
$$ h(\zeta)
\equiv
\lambda \zeta \left( 1 + \beta_q \zeta^q + \beta_{2q} \zeta^{2q} \right)
\mod \left\langle \zeta^{2q + 2} \right\rangle. $$
Thus
\begin{equation*}
  \begin{split}
h \circ g(\zeta)
& \equiv
\lambda \gamma \zeta \left( 1 + A \zeta^q + B \zeta^{2q} \right)
\left(1 + \beta_q \zeta^q \left( 1 + A \zeta^q \right)^q + \beta_{2q}
  \zeta^{2q} \right)
\mod \left\langle \zeta^{2q + 2} \right\rangle
\\ & \equiv
\lambda \gamma \zeta \left( 1 + A \zeta^q + B \zeta^{2q} \right)
\left(1 + \beta_q \zeta^q + \left( q \beta_q A + \beta_{2q} \right) \zeta^{2q} \right)
\mod \left\langle \zeta^{2q + 2} \right\rangle
\\ & \equiv
\lambda \gamma \zeta
\left(1 + \left( A + \beta_q \right) \zeta^q + \left( B + (q + 1) \beta_q A + \beta_{2q} \right) \zeta^{2q} \right)
\mod \left\langle \zeta^{2q + 2} \right\rangle,
  \end{split}
\end{equation*}
and on the other hand
\begin{equation*}
  \begin{split}
\whg \circ h (\zeta)
& \equiv
\gamma \lambda \zeta
\left(1 + \beta_q \zeta^q + \beta_{2q} \zeta^{2q} \right)
\left( 1 + \whA \lambda^q \zeta^q \left(1 + \beta_q \zeta^q\right)^q +
  \whB \lambda^{2q} \zeta^{2q} \right)
\\ & \quad
\mod \left\langle \zeta^{2q + 2} \right\rangle
\\ & \equiv
\gamma \lambda \zeta
\left(1 + \beta_q \zeta^q + \beta_{2q} \zeta^{2q} \right)
\left( 1 + \whA \lambda^q \zeta^q + \left(q \whA \lambda^q \beta_q +
    \whB \lambda^{2q} \right) \zeta^{2q} \right)
\\ & \quad
\mod \left\langle \zeta^{2q + 2} \right\rangle
\\ & \equiv
\gamma \lambda \zeta
\left( 1 + \left( \beta_q + \whA \lambda^q \right) \zeta^q + \left( \beta_{2q} + (q + 1) \whA \lambda^q \beta_q + \whB \lambda^{2q} \right) \zeta^{2q} \right)
\\ & \quad
\mod \left\langle \zeta^{2q + 2} \right\rangle
  \end{split}
\end{equation*}
Comparing coefficients and using~$\lambda^q = A / \whA$ we obtain the lemma.
\end{proof}

\begin{proof}[Proof of Proposition~\ref{p:basic normal form}]
The last assertion is given by Lemma~\ref{l:invariance of resit}.
Since~$i_0(\whg^q) = i_0(g^q)$, the equivalence between~$a_1 \neq 0$
and~$i_0(g^q) = q$ is trivial when~$q = 1$, and it
follows from Lemma~\ref{l:q iterate} with~$\ell = q$ when~$q \ge 2$.

It remains to prove the first assertion of the proposition.
In the case~$q = 1$, take~$\whg = g$.
Assume~$q \ge 2$.
Let~$s_0(\zeta)$ and~$h_0(\zeta)$ be the power series
in~$k[[\zeta]]$ defined by
$$ s_0(\zeta) \= 1
\text{ and }
h_0(\zeta) \= \zeta. $$
We define inductively for every integer~$\ell \ge 1$ polynomials~$s_{\ell}(\zeta)$ and~$h_{\ell}(\zeta)$ in~$k[\zeta]$ of
degree at most~$\ell + 1$ and~$\left[ \frac{\ell}{q} \right]$, respectively, such that
$$ h_{\ell}(\zeta)
\equiv
h_{\ell - 1}(\zeta) \mod \left\langle \zeta^{\ell + 1} \right\rangle
\text{ and }
s_{\ell}(\zeta)
\equiv
s_{\ell - 1}(\zeta) \mod \left\langle \zeta^{\left[ \frac{\ell - 1}{q} \right]} \right\rangle, $$
and such that the power series~$g_{\ell}(\zeta) \= h_{\ell} \circ g \circ
h_{\ell}^{-1}(\zeta)$ in~$k[[\zeta]]$ satisfies
\begin{equation}
  \label{e:finite level clearing}
g_{\ell}(\zeta)
\equiv
\gamma \zeta s_{\ell}(\zeta^q) \mod \left\langle \zeta^{\ell + 2} \right\rangle.  
\end{equation}
This clearly implies the first assertion of the proposition.

Note that
$$ g_0(\zeta)
\=
h_0 \circ g \circ h_0^{-1}(\zeta)
=
g(\zeta)
\equiv
\gamma \zeta \mod \left\langle \zeta^2 \right\rangle, $$
so~\eqref{e:finite level clearing} is satisfied when~$\ell = 0$.
Let~$\ell \ge 1$ be an integer for which~$s_{\ell - 1}(\zeta)$
and~$h_{\ell - 1}(\zeta)$ are already defined and
satisfy~\eqref{e:finite level clearing} with~$\ell$ replaced by~$\ell
- 1$, and let~$A$ in~$k$ be such that
$$ g_{\ell - 1}(\zeta)
\equiv
\gamma \zeta \left( s_{\ell - 1}(\zeta^q) + A \zeta^\ell \right) \mod
\left\langle \zeta^{\ell + 2} \right\rangle. $$
In the case~$\ell$ is divisible by~$q$, the congruence~\eqref{e:finite level
  clearing} is verified if we put
$$ h_{\ell}(\zeta) \= h_{\ell - 1}(\zeta)
\text{ and }
s_{\ell}(\zeta) \= s_{\ell - 1}(\zeta) + A \zeta^{\frac{\ell}{q}}. $$
Suppose~$\ell$ is not divisible by~$q$, put~$\alpha \= -
\frac{A}{\gamma^\ell - 1}$, and define
$$ h(\zeta) \= \zeta \left( 1 + \alpha \zeta^\ell \right). $$
Moreover, put
$$ h_{\ell}(\zeta) \= h \circ h_{\ell - 1}(\zeta)
\text{ and }
s_{\ell}(\zeta) \= s_{\ell - 1}(\zeta). $$
Then we have
$$ g_{\ell}(\zeta)
=
h_\ell \circ g \circ h_{\ell}^{-1}(\zeta)
=
h \circ g_{\ell - 1} \circ h^{-1}(\zeta), $$
so there is~$B$ in~$k$ such that
\begin{equation*}
  \begin{split}
g_{\ell}(\zeta)
& \equiv
\gamma \zeta \left( s_{\ell - 1}(\zeta^q) + B \zeta^\ell \right)
\mod \left\langle \zeta^{\ell + 2} \right\rangle
\\ & \equiv
\gamma \zeta \left( s_{\ell}(\zeta^q) + B \zeta^\ell \right)
\mod \left\langle \zeta^{\ell + 2} \right\rangle.
  \end{split}
\end{equation*}
Thus, to complete the proof of the induction step it is enough to show that~$B = 0$.
To do this, note that by our definition of~$\alpha$ we have
\begin{equation*}
  \begin{split}
h \circ g_{\ell - 1}(\zeta)
& =
g_{\ell - 1}(\zeta) \left( 1 + \alpha g_{\ell - 1}(\zeta)^\ell \right)
\\ & \equiv
\gamma \zeta \left( s_{\ell - 1}(\zeta^q) + A \zeta^\ell \right)
\left( 1 + \alpha \gamma^{\ell} \zeta^{\ell} \right)
\mod \left\langle \zeta^{\ell + 2} \right\rangle
\\ & \equiv
\gamma \zeta \left( s_{\ell - 1}(\zeta^q) + \left( A + \alpha \gamma^{\ell} \right) \zeta^{\ell} \right)
\mod \left\langle \zeta^{\ell + 2} \right\rangle
\\ & \equiv
\gamma \zeta \left( s_{\ell - 1}(\zeta^q) + \alpha \zeta^{\ell} \right)
\mod \left\langle \zeta^{\ell + 2} \right\rangle.
  \end{split}
\end{equation*}
On the other hand
\begin{equation*}
  \begin{split}
g_{\ell} \circ h(\zeta)
& \equiv
\gamma h(\zeta) \left( s_{\ell - 1} \left( h(\zeta)^q \right) + B h(\zeta)^\ell \right)
\mod \left\langle \zeta^{\ell + 2} \right\rangle
\\ & \equiv
\gamma \zeta \left( 1 + \alpha \zeta^{\ell} \right)
\left( s_{\ell - 1} \left( \zeta^q \right) + B \zeta^{\ell} \right)
\mod \left\langle \zeta^{\ell + 2} \right\rangle
\\ & \equiv
\gamma \zeta \left( s_{\ell - 1} \left( \zeta^q \right) + (\alpha + B) \zeta^{\ell} \right)
\mod \left\langle \zeta^{\ell + 2} \right\rangle.
  \end{split}
\end{equation*}
Comparing coefficients we conclude that~$B = 0$.
This completes the proof of the induction step and of the proposition.
\end{proof}

\subsection{Proof of Theorem~\ref{t:characterization of minimally ramified}}
\label{ss:proof of characterization of minimally ramified}
The proof of Theorem~\ref{t:characterization of minimally ramified} is
given after the following proposition, which is the special case~$q = 1$.

When~$p$ is odd, the following proposition is~\cite[\emph{Exemple}~$3.19$]{Rivthese}.
We include its short proof for completeness.
\begin{prop}
\label{p:concrete minimally ramified}
Let~$p$ be a prime number and $k$ a field of characteristic~$p$.
Given~$a_1$ and~$a_2$ in~$k$, let~$g(\zeta)$ be a power series in~$k[[\zeta]]$ satisfying
$$ g(\zeta)
\equiv
\zeta \left( 1 + a_1 \zeta + a_2 \zeta^2 \right)
\mod \left\langle \zeta^4 \right\rangle $$
in~$k[[\zeta]]$.
If~$p$ is odd (resp.~$p = 2$), then~$g$ is minimally ramified if and only if
$$ a_1 \neq 0
\text{ and }
a_2 \neq a_1^2
\quad
\left( \text{resp. } a_1 \neq 0,
a_2 \neq 0,
\text{ and }
a_2 \neq a_1^2 \right). $$
\end{prop}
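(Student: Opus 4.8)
Since $g(\zeta) = \zeta + a_1\zeta^2 + a_2\zeta^3 + \cdots$ we have $g'(0) = 1$, so in the notation of the previous sections $\gamma = 1$, $q = 1$, and ``minimally ramified'' means $i_n(g) = \tfrac{p^{n+1}-1}{p-1}$ for every $n \ge 0$. By Lemma~\ref{l:laubie saine}, when $p$ is odd (resp.\ $p = 2$) this is equivalent to equality holding for $n = 0$ and $n = 1$ (resp.\ for $n = 0$, $1$, and $2$), that is, to $i_0(g) = 1$ together with $i_1(g) = p + 1$ (resp.\ $i_0(g) = 1$, $i_1(g) = 3$, and $i_2(g) = 7$). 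Since $i_0(g) = \ord((g(\zeta)-\zeta)/\zeta)$, the condition $i_0(g) = 1$ is exactly $a_1 \neq 0$, which is the first clause in each case; so from now on I assume $a_1 \neq 0$, and the task is to express the remaining conditions in terms of $a_1$ and $a_2$.

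For $p$ odd, the plan is to run the cocycle computation from the proof of Lemma~\ref{l:keating} with $\Delta_1(\zeta) \= g(\zeta) - \zeta = a_1\zeta^2 + a_2\zeta^3 + \cdots$ and $\Delta_{m+1}(\zeta) \= \Delta_m(g(\zeta)) - \Delta_m(\zeta)$, so that $\Delta_p(\zeta) = g^p(\zeta) - \zeta$. Writing $\Delta_m(\zeta) = \sum_j b_j\zeta^j$, the identity $\Delta_{m+1}(\zeta) = \sum_j b_j\zeta^j((1 + a_1\zeta + a_2\zeta^2 + \cdots)^j - 1)$ gives $\ord(\Delta_m) = m+1$ for $1 \le m \le p-1$ just as in that proof; and since $(1+x)^p = 1 + x^p$ and $(1+x)^{p+1} = (1+x^p)(1+x)$ in characteristic $p$, the coefficient of $\zeta^{p+1}$ in $\Delta_p$ is $0$ while the coefficient of $\zeta^{p+2}$ equals $a_1$ times the coefficient $v_{p-1}$ of $\zeta^{p+1}$ in $\Delta_{p-1}$. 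Thus $\ord(\Delta_p) \ge p+2$ always, and $i_1(g) = p+1$ if and only if $v_{p-1} \neq 0$. The recursion moreover exhibits $v_{p-1}$ as a universal polynomial over $\F_p$ in $a_1$ and $a_2$ only — which is precisely why, as the statement asserts, minimal ramification depends only on $g \bmod \langle\zeta^4\rangle$ — and an easy induction shows it has the shape $v_{p-1} = a_1^{p-2}(e\,a_1^2 + c\,a_2)$ with $c, e \in \F_p$. To relate $e$ to $c$, observe that the M\"obius map $M(\zeta) = \zeta/(1 - a_1\zeta) = \zeta + a_1\zeta^2 + a_1^2\zeta^3 + \cdots$ satisfies $M^p(\zeta) = \zeta/(1 - pa_1\zeta) = \zeta$ in characteristic $p$, hence $i_1(M) = +\infty$; this forces $v_{p-1}$ to vanish identically on the locus $a_2 = a_1^2$, so $e = -c$ and $v_{p-1} = c\,a_1^{p-2}(a_2 - a_1^2)$. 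Finally, telescoping the recursion for the coefficient of $a_1^{p-2}a_2$ and invoking Wilson's theorem $(p-1)! \equiv -1 \bmod p$ gives $c \equiv -1 \not\equiv 0 \bmod p$. Therefore $i_1(g) = p+1 \iff a_2 \neq a_1^2$, which finishes the odd case.

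For $p = 2$ the same scheme is carried out at two levels, with the Frobenius identity $(x+y)^2 = x^2 + y^2$ doing most of the cancellation. First, a short computation shows the $\zeta^4$-coefficient of $g^2(\zeta) - \zeta$ equals $a_1^3 + a_1a_2 = a_1(a_1^2 - a_2)$, so $i_1(g) = 3 \iff a_2 \neq a_1^2$. Assuming this, write $g^2(\zeta) = \zeta + c\zeta^4 + d\zeta^5 + \cdots$ with $c = a_1(a_1^2 - a_2) \neq 0$ and, extracting the $\zeta^5$-coefficient, $d = a_2^2 + a_1^2a_2 = a_2(a_2 - a_1^2)$. Then with $\phi(\zeta) \= g^2(\zeta) - \zeta$ one has $g^4(\zeta) - \zeta = \phi(\zeta) + \phi(\zeta + \phi(\zeta))$, and expanding with Frobenius the coefficients of $\zeta^4,\dots,\zeta^7$ cancel while the coefficient of $\zeta^8$ is $cd$. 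Hence $i_2(g) = 7 \iff cd \neq 0 \iff a_1 \neq 0$, $a_2 \neq 0$, and $a_2 \neq a_1^2$; together with the criteria for $i_0(g) = 1$ and $i_1(g) = 3$ this is exactly the asserted characterization for $p = 2$.

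The two coefficient extractions are short and routine, characteristic $p$ annihilating almost every term, so the one genuinely delicate point is controlling the scalar $c$ in $v_{p-1}$ uniformly in the odd prime $p$; the M\"obius-map remark isolates this into the single congruence $c_{p-1} \equiv -1 \bmod p$, which is where Wilson's theorem enters.
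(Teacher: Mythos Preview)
Your argument is correct, and for $p = 2$ it is essentially identical to the paper's: both compute $g^2(\zeta) - \zeta \equiv a_1(a_2+a_1^2)\zeta^4 + a_2(a_2+a_1^2)\zeta^5 \bmod \langle\zeta^6\rangle$ and then push one more level to get the $\zeta^8$-coefficient of $g^4(\zeta)-\zeta$ equal to $a_1a_2(a_2+a_1^2)^2$.

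For odd $p$ the overall scaffolding is the same --- both proofs run the cocycle recursion $\Delta_{m+1}=\Delta_m\circ g-\Delta_m$ and extract the $\zeta^{p+2}$-coefficient of $\Delta_p=g^p-\zeta$ --- but the final identification of that coefficient differs. The paper tracks the leading term $\Delta_n\equiv n!\,a_1^n\zeta^{n+1}$ explicitly, then carries three terms of $\Delta_{p-2}$ forward; the unknown higher coefficients cancel in the passage to $\Delta_{p-1}$ and again to $\Delta_p$, yielding directly $\Delta_p\equiv -a_1^{p-1}(a_2-a_1^2)\zeta^{p+2}$ (Wilson enters as $(p-2)!\equiv 1$). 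Your route instead isolates the shape $v_{p-1}=a_1^{p-2}(e\,a_1^2+c\,a_2)$ from the two-term recursion on $(u_m,v_m)$, then uses the M\"obius germ $\zeta/(1-a_1\zeta)$ (for which $g^p=\mathrm{id}$) to force $e=-c$, and finally pins down $c\equiv -1$ by telescoping $c_{m+1}=(m+1)!+(m+2)c_m$ to $c_{p-1}=p!\,(H_p-1)\equiv (p-1)!\equiv -1$. Both arrive at the same nonvanishing criterion $a_2\neq a_1^2$; the paper's path is a shade more direct, while your M\"obius trick is a pleasant conceptual shortcut that explains \emph{why} the answer must vanish on $a_2=a_1^2$ before any computation. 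One small note: the reduction you cite via Lemma~\ref{l:laubie saine} is fine, but the paper uses the slightly sharper Proposition~\ref{p:minimally ramified}, which for odd $p$ lets one check only $i_1(g)=p+1$ (given $i_0(g)=1$).
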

\begin{proof}
Note that~$i_0(g) = 1$ is equivalent to~$a_1 \neq 0$.
Since this is necessary for~$g$ to be minimally ramified, we
assume~$a_1 \neq 0$.
In part~$1$ we prove the proposition when~$p$ is odd and in part~$2$
when~$p = 2$.

\partn{1}
Suppose~$p$ is odd.
Note that by Proposition~\ref{p:minimally ramified} the power series~$g$ is minimally ramified if and only if~$i_1(g) = p + 1$.

For~$n$ in~$\{1, \ldots, p \}$ define the power series~$\Delta_n(\zeta)$ in~$k[[\zeta]]$ inductively by~$\Delta_1(\zeta) \= g(\zeta) - \zeta$, and for~$n$ in~$\{2, \ldots, p \}$ by
$$ \Delta_n(\zeta)
\=
\Delta_{n - 1}(g(\zeta)) - \Delta_{n - 1}(\zeta). $$
Note that~$\Delta_p(\zeta) = g^p(\zeta) - \zeta$.

We prove first
\begin{equation}
  \label{e:just before jump}
  \Delta_{p - 1}(\zeta)
\equiv
- a_1^{p - 1} \zeta^{p} - a_1^{p - 2} \left( a_2 - a_1^2 \right)
\zeta^{p + 1}
\mod \left\langle \zeta^{p + 2} \right\rangle.
\end{equation}
To do this, first we prove by induction that for every~$n$ in~$\{1,
\ldots, p - 1 \}$ we have
\begin{equation}
  \label{e:before jump}
  \Delta_n(\zeta)
\equiv
n! a_1^n \zeta^{n + 1} \mod \left\langle \zeta^{n + 2} \right\rangle.
\end{equation}
When~$n = 1$ this is true by the definition of~$\Delta_1$.
Let~$n$ in~$\{1, \ldots, p - 2 \}$ be such that~\eqref{e:before jump} holds, and let~$B$ in~$k$ be such that
$$ \Delta_n(\zeta)
\equiv
n! a_1^n \zeta^{n + 1} + B \zeta^{n + 2}
\mod \left\langle \zeta^{n + 3} \right\rangle. $$
Then
\begin{equation*}
  \begin{split}
    \Delta_{n + 1}(\zeta)
& \equiv
n! a_1^n \zeta^{n + 1} \left( 1 + a_1 \zeta \right)^{n + 1} + B \zeta^{n + 2}
\\ & \quad
- \left( n! a_1^n \zeta^{n + 1} + B \zeta^{n + 2} \right)
\mod \left\langle \zeta^{n + 3} \right\rangle.
\\ & \equiv
(n + 1)! a_1^{n + 1} \zeta^{n + 2}
\mod \left\langle \zeta^{n + 3} \right\rangle.
  \end{split}
\end{equation*}
This proves the induction step and~\eqref{e:before jump}.
To prove~\eqref{e:just before jump}, put~$A' \= a_1^{p - 2}$ and note that by~\eqref{e:before jump} with~$n = p - 2$ there are~$B'$ and~$C'$ in~$k$ such that
$$ \Delta_{p - 2}(\zeta)
\equiv
A' \zeta^{p - 1} + B' \zeta^p + C' \zeta^{p + 1}
\mod \left\langle \zeta^{p + 2} \right\rangle. $$
We thus have
\begin{equation*}
  \begin{split}
\Delta_{p - 1}(\zeta)
& \equiv
A' \zeta^{p - 1} \left( 1 + a_1 \zeta + a_2 \zeta^2 \right)^{p - 1}
+ B' \zeta^p(1 + a_1 \zeta)^p
+ C' \zeta^{p + 1}
\\ & \quad
- \left( A' \zeta^{p - 1} + B' \zeta^p + C' \zeta^{p + 1} \right)
\mod \left\langle \zeta^{p + 2} \right\rangle
\\ & \equiv
- A' a_1 \zeta^p + A' \left( a_1^2 - a_2 \right) \zeta^{p + 1}
\mod \left\langle \zeta^{p + 2} \right\rangle.
  \end{split}
\end{equation*}
This proves~\eqref{e:just before jump}.

To complete the proof, put
$$ A'' \= - a_1^{p - 1}
\text{ and }
B'' \= - a_1^{p - 2} \left( a_2 - a_1^2 \right), $$
and note that by~\eqref{e:just before jump} there is~$C''$ in~$k$ such that
\begin{equation*}
\Delta_{p - 1}(\zeta)
\equiv
A'' \zeta^p + B'' \zeta^{p + 1} + C'' \zeta^{p + 2}
\mod \left\langle \zeta^{p + 3} \right\rangle.
\end{equation*}
Using~$p \ge 3$, we have
\begin{equation*}
  \begin{split}
\Delta_p(\zeta)
& \equiv
A'' \zeta^p \left( 1 + a_1 \zeta + a_2 \zeta^2 \right)^p
+ B'' \zeta^{p + 1} \left( 1 + a_1 \zeta \right)^{p + 1}
+ C'' \zeta^{p + 2}
\\ & \quad
-\left( A'' \zeta^p + B'' \zeta^{p + 1} + C'' \zeta^{p + 2} \right)
\mod \left\langle \zeta^{p + 3} \right\rangle
\\ & \equiv
a_1 B'' \zeta^{p + 2}
\mod \left\langle \zeta^{p + 3} \right\rangle.
\\ & \equiv
- a_1^{p - 1}\left( a_2 - a_1^2 \right) \zeta^{p + 2}
\mod \left\langle \zeta^{p + 3} \right\rangle.
\end{split}
\end{equation*}
This proves that we have~$i_1(g) = p + 1$ if and only if~$a_2 \neq a_1^2$, and
completes the proof of the proposition when~$p$ is odd.

\partn{2}
Suppose~$p = 2$.
Note that by Proposition~\ref{p:minimally ramified} the power series~$g$ is
minimally ramified if and only if~$i_2(g^q) = 7$.

Put
$$ \Delta_1(\zeta) \= g(\zeta) - \zeta
\text{ and }
\Delta_2(\zeta) \= \Delta_1 (g(\zeta)) - \Delta_1(\zeta), $$
and note that~$\Delta_2(\zeta) = g^2(\zeta) - \zeta$.
Letting~$a_3$ and~$a_4$ in~$k$ be such that
$$ g(\zeta)
\equiv
\zeta \left( 1 + a_1 \zeta + a_2 \zeta^2 + a_3 \zeta^3 + a_4
  \zeta^4 \right)
\mod \left\langle \zeta^6 \right\rangle, $$
we have
\begin{equation*}
  \begin{split}
    \Delta_2(\zeta)
& \equiv
a_1 \zeta^2 \left( 1 + a_1 \zeta + a_2 \zeta^2 + a_3 \zeta^3 \right)^2
+ a_2 \zeta^3 \left(1 + a_1 \zeta + a_2 \zeta^2 \right)^3
\\ & \quad
+ a_3 \zeta^4 \left( 1 + a_1 \zeta \right)^4
+ a_4 \zeta^5
\\ & \quad
- \left( a_1 \zeta^2 + a_2 \zeta^3 + a_3 \zeta^4 + a_4 \zeta^5 \right)
\mod \left\langle \zeta^6 \right\rangle
\\ & \equiv
a_1 \zeta^2 \left( 1 + a_1^2 \zeta^2 \right)
+ a_2 \zeta^3 \left( 1 + a_1 \zeta + \left( a_2 + a_1^2 \right) \zeta^2 \right)
\\ & \quad
- \left( a_1 \zeta^2 + a_2 \zeta^3 \right)
\mod \left\langle \zeta^6 \right\rangle
\\ & \equiv
a_1 \left( a_2 + a_1^2 \right) \zeta^4
+ a_2 \left( a_2 + a_1^2 \right) \zeta^5 
\mod \left\langle \zeta^6 \right\rangle.
\end{split}
  \end{equation*}
We thus obtain
\begin{equation}
\label{e:second iterate}
g^2(\zeta)
\equiv
\zeta \left( 1 + a_1 \left( a_2 + a_1^2 \right) \zeta^3
+ a_2 \left( a_2 + a_1^2 \right) \zeta^4 \right)
\mod \left\langle \zeta^6 \right\rangle.
\end{equation}

Put
$$ \hDelta_1(\zeta) \= g^2(\zeta) - \zeta
\text{ and }
\hDelta_2(\zeta) \= \hDelta_1 \left( g^2(\zeta) \right) - \hDelta_1(\zeta), $$
and note that~$\hDelta_1(\zeta) = \Delta_2(\zeta)$ and~$\hDelta_2(\zeta) = g^4(\zeta) - \zeta$.
So, if we put
$$ A \= a_1 \left( a_2 + a_1^2 \right)
\text{ and }
B \= a_2 \left( a_2 + a_1^2 \right), $$
then by~\eqref{e:second iterate} there are~$C$, $D$, and~$E$ in~$k$ such that
$$ \hDelta_1(\zeta)
\equiv
A \zeta^4 + B \zeta^5 + C \zeta^6 + D \zeta^7 + E \zeta^8
\mod \left\langle \zeta^9 \right\rangle. $$
Then we have
\begin{equation*}
  \begin{split}
\hDelta_2(\zeta)
& \equiv
A \zeta^4 \left( 1 + A \zeta^3 + B \zeta^4 \right)^4
+ B \zeta^5 \left( 1 + A \zeta^3 \right)^5
+ C \zeta^6 + D \zeta^7 + E \zeta^8
\\ & \quad
- \left( A \zeta^4 + B \zeta^5 + C \zeta^6 + D \zeta^7 + E \zeta^8 \right)
\mod \left\langle \zeta^9 \right\rangle
\\ & \equiv
A B \zeta^8
\mod \left\langle \zeta^9 \right\rangle
\\ & \equiv
a_1 a_2 \left( a_2 + a_1^2 \right)^2 \zeta^8
\mod \left\langle \zeta^9 \right\rangle.
  \end{split}
\end{equation*}
This proves that~$i_2(g^q) = 7$ if and only if~$a_2 \neq 0$ and~$a_2
\neq a_1^2$, and completes the proof of the proposition when~$p = 2$.
\end{proof}

\begin{proof}[Proof of Theorem~\ref{t:characterization of minimally ramified}]
Since minimal ramification is invariant under conjugacy, by Proposition~\ref{p:basic normal form} we can assume that~$g(\zeta)$ is of the form
$$ g(\zeta)
=
\gamma \zeta \left(1 + \sum_{j = 1}^{+ \infty} a_j \zeta^{jq}
\right). $$
By Proposition~\ref{p:basic normal form} when~$q \ge 2$, we have~$a_1 \neq 0$ if and only
if~$i_0(g^q) = q$.
Since this last condition is necessary for~$g$ to be minimally
ramified, from now on we assume~$a_1 \neq 0$.

Put
$$ \pi(\zeta) \= \zeta^q
\text{ and }
\whg(\zeta) \= \zeta \left( 1 + \sum_{j = 1}^{+ \infty} a_j \zeta^j \right)^q, $$
and note that~$\pi \circ g = \whg \circ \pi$.
Since~$q$ is not divisible by~$p$, this implies that for every
integer~$n \ge 1$ we have
$$ i_n(g)
=
\ord \left( \left( \frac{g^{qp^n}(\zeta)}{\zeta} \right)^q - 1 \right)
=
\ord \left( \frac{\whg^{qp^n} \circ \pi(\zeta) -
    \pi(\zeta)}{\pi(\zeta)} \right)
=
q i_n(\whg^q)
=
q i_n(\whg). $$
Thus~$g$ is minimally ramified if and only if~$\whg$ is.
Then the theorem is a direct consequence of Proposition~\ref{p:concrete minimally ramified}.
\end{proof}

\section{Optimal cycles}
\label{s:optimality}
In this section we address the optimality of the periodic points lower bounds~\eqref{e:periodic bound} and~\eqref{e:fixed bound}.
In~\S\ref{ss:generic optimality} we prove a general version of Theorem~\ref{t:optimality and minimality}, that we state as Theorem~\ref{t:optimality and minimality}'.
This result implies in particular that the lower bound~\eqref{e:periodic bound} is optimal.
In~\S\ref{ss:concrete optimality} we exhibit concrete polynomials that satisfy the conclusions of Theorem~\ref{t:periodic optimality}, see Propositions~\ref{p:odd optimality} and~\ref{p:even periodic optimality}.
Part~$1$ of Proposition~\ref{p:odd optimality} implies that inequality~\eqref{e:fixed bound} is optimal.
The proof of Theorem~\ref{t:quadratic optimality and minimality} is given at the end of~\S\ref{ss:concrete optimality}.

\subsection{Optimality of the Periodic Points Lower Bound}
\label{ss:generic optimality}
The purpose of this section is to prove the following result, which is a more general version of Theorem~\ref{t:optimality and minimality}.

\begin{generic}[Theorem~\ref{t:optimality and minimality}']
Let~$p$ be a prime number, $(K, |\cdot|)$ an algebraically closed field of residue characteristic~$p$, and~$q \ge 1$ an integer that is not divisible by~$p$.
Then the following properties hold.
\begin{enumerate}
\item[1.]
Let~$\lambda$ in~$K$ be such that~$|\lambda| = 1$ and such that the order of~$\tlambda$ in~$\tK^*$ is~$q$.
Moreover, let $n \ge 1$ be an integer and~$P(z) = \lambda z + \cdots $ a polynomial in~$\OK[z]$ having an optimal cycle of period~$qp^n$.
Then this is the only cycle of minimal period~$qp^n$ of~$f$, and if~$p$ is odd, then the reduction of~$P$ is minimally ramified.
If~$p = 2$ and~$n \ge 2$, then the reduction of~$P$ is minimally ramified or almost minimally ramified.
\item[2.]
Let~$F$ be the prime field of~$K$, $\gamma$ a root of unity in~$\tK$ of order~$q$, and~$g(\zeta) = \gamma \zeta + \cdots$ a polynomial in~$\tK[\zeta]$ that is either minimally ramified if~$p$ is odd, or minimally ramified or almost minimally ramified if~$p = 2$.
Then for all integers~$n \ge 1$ and~$d \ge \max\{\deg(g), p\}$, there is a nonzero polynomial~$R_n(\alpha_1, \ldots, \alpha_d)$ in~$F[\alpha_1, \ldots, \alpha_d]$ such that the following property holds.
If~$a_1$, \ldots, $a_d$ in~$\OK$ are such that the reduction of the polynomial~$P(z) \= a_1 z + \cdots + a_d z^d$ is~$g$ and such that~$R_n(a_1, \ldots, a_n) \neq 0$, then~$P$ has an optimal cycle of period~$qp^n$.
\end{enumerate}
\end{generic}

The proof of Theorem~\ref{t:optimality and minimality}' is at the end of this section.
We use the following general criterion, which is stated in a more general form than what is needed for this section.

\begin{prop}
\label{p:detailed optimality criterion}
Let~$p$ be a prime number, and~$(K, |\cdot|)$ an algebraically closed ultrametric field of residue characteristic~$p$.
Given an integer~$q \ge 1$ that is not divisible by~$p$, let~$\lambda$ in~$K$ be such that~$|\lambda| = 1$, and such that the order of~$\tlambda$ in~$\tK^*$ is equal to~$q$.
Then, for every power series~$f(z) = \lambda z + \cdots$ in~$\OK[[z]]$ the following properties hold.
\begin{enumerate}
\item[1.]
Suppose~$\wideg(f^q(z) - z) = q + 1$ and~$\lambda^q \neq 1$.
Then~$f$ has a unique periodic orbit in~$\mK \setminus \{ 0 \}$ of minimal period~$q$, and for every periodic point~$w_0$ in this orbit, inequality~\eqref{e:fixed bound} holds with equality.
\item[2.]
Let~$n \ge 1$ be an integer, and suppose that
$$ \wideg \left( \frac{f^{qp^n}(z) - z}{f^{qp^{n - 1}}(z) - z} \right)
=
qp^n, $$
and that for every periodic point~$z_0$ of period~$qp^{n - 1}$ we have~$(f^{qp^n})'(z_0) \neq 1$.
Then there is a unique cycle of~$f$ of minimal period~$qp^n$, and this cycle is optimal.
\end{enumerate}
\end{prop}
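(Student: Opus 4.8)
The plan is to deduce both parts from Lemma~\ref{l:periodic bound}, which already carries the analytic heart of the matter in its ``equality'' clauses; what remains is to produce, under the stated hypotheses, a periodic point of the required minimal period realizing that equality, and this I would extract from the finiteness of the relevant Weierstrass degree together with Lemma~\ref{l:admissible periods}. As in the proof of Lemma~\ref{l:periodic bound}, I would first replace~$K$ by its completion, so that the fact that~$\wideg$ counts the zeros in~$\mK$ with multiplicity (cf.~\S\ref{ss:preliminaries}) and Lemma~\ref{l:fixed are periodic} are at my disposal.

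For part~$1$: since~$\lambda^q \neq 1$, the constant term~$\lambda^q - 1$ of~$(f^q(z) - z)/z$ is a unit, so~$z = 0$ is a \emph{simple} zero of~$f^q(z) - z$; as~$K$ is algebraically closed and~$\wideg(f^q(z) - z) = q + 1 < \infty$, the power series~$f^q(z) - z$ has exactly~$q + 1$ zeros in~$\mK$ counted with multiplicity, hence at least one zero~$w_0$ in~$\mK \setminus \{ 0 \}$. By part~$2$ of Lemma~\ref{l:admissible periods} the minimal period of~$w_0$ has the form~$q p^m$; since~$w_0$ has period~$q$ and~$p \nmid q$, necessarily~$m = 0$, so~$w_0$ has minimal period~$q$. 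Part~$1$ of Lemma~\ref{l:periodic bound} then applies: inequality~\eqref{e:fixed bound} holds with equality precisely because~$\wideg(f^q(z) - z) = q + 1$, and the last clause of that lemma yields both the uniqueness of the cycle of minimal period~$q$ in~$\mK \setminus \{ 0 \}$ and the equality in~\eqref{e:fixed bound} at each of its points.

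For part~$2$: I would put~$h(z) \= (f^{q p^n}(z) - z)/(f^{q p^{n - 1}}(z) - z)$, which lies in~$\OK[[z]]$ by Lemma~\ref{l:fixed are periodic} applied with~$g = f^{q p^{n - 1}}$ and~$m = p$ (note~$f^{q p^{n-1}}(0) = 0$). Applying the hypothesis to the fixed point~$z = 0$ --- a periodic point of period~$q p^{n - 1}$ --- gives~$(f^{q p^n})'(0) = \lambda^{q p^n} \neq 1$; since~$q p^{n - 1}$ divides~$q p^n$, this forces~$\lambda^{q p^{n - 1}} \neq 1$ as well, so the constant term of~$h$ equals~$(\lambda^{q p^n} - 1)/(\lambda^{q p^{n - 1}} - 1) \neq 0$. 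Hence~$0$ is not a zero of~$h$, and since~$\wideg(h) = q p^n \ge p \ge 2$ is finite and~$K$ is algebraically closed, $h$ has a zero~$z_0$, necessarily in~$\mK \setminus \{ 0 \}$. The crucial point is that~$z_0$ has minimal period exactly~$q p^n$: it is a zero of~$f^{q p^n}(z) - z$, so by Lemma~\ref{l:admissible periods} its minimal period is~$q p^m$ with~$m \le n$; were~$m < n$, then~$q p^{n - 1}$ would be a period of~$z_0$, so~$z_0$ would be a zero of~$f^{q p^{n - 1}}(z) - z$ as well as of~$h$, hence a zero of the product~$f^{q p^n}(z) - z$ of order at least two; but the hypothesis applies to~$z_0$ (which would then have period~$q p^{n-1}$) and gives~$(f^{q p^n})'(z_0) \neq 1$, making~$z_0$ a simple zero of~$f^{q p^n}(z) - z$, a contradiction. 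With~$z_0$ of minimal period~$q p^n$ now in hand, part~$2$ of Lemma~\ref{l:periodic bound} --- whose hypothesis~$\lambda^{q p^n} \neq 1$ has been verified --- shows that, since condition~\eqref{e:minimal wideg increment} is exactly our assumption~$\wideg(h) = q p^n$, inequality~\eqref{e:periodic bound bis} holds with equality at~$z_0$; thus the cycle of~$z_0$ is an optimal cycle of period~$q p^n$, and the last clause of the same lemma gives its uniqueness among cycles of minimal period~$q p^n$.

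The main obstacle --- and essentially the only step not reducible to bookkeeping with the earlier lemmas --- is the minimal-period claim in part~$2$: one must exclude the possibility that the zero of~$h$ delivered by the Weierstrass count is in fact a point of lower period that has survived into the quotient. This is exactly where the ``no parabolic periodic point of period~$q p^{n - 1}$'' hypothesis enters, via the simple-zero argument above; it plays the role that Lemma~\ref{l:generic independence zero characteristic} plays in the proof of Theorem~\ref{t:higher order sen}, the difference being that here non-parabolicity is imposed as a hypothesis rather than arranged by a genericity argument.
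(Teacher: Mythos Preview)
Your proof is correct and follows essentially the same route as the paper's: produce a zero of the relevant power series via the Weierstrass degree, use Lemma~\ref{l:admissible periods} and the ``no parabolic point of period $qp^{n-1}$'' hypothesis to force the minimal period, and then invoke the equality clause of Lemma~\ref{l:periodic bound} for optimality and uniqueness. Your argument is in fact slightly tidier than the paper's in two places: you explicitly extract $\lambda^{qp^n}\neq 1$ (hence $\lambda^{qp^{n-1}}\neq 1$) from the hypothesis applied at $z=0$, which the paper uses implicitly when it appeals to part~2 of Lemma~\ref{l:periodic bound}; and you rule out $z_0=0$ up front by computing $h(0)\neq 0$, whereas the paper lets the contradiction argument absorb that case.
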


Note that to formulate part~$2$, we used the fact that the power series~$f^{qp^{n - 1}}(z) - z$ divides~$f^{qp^n}(z) - z$ in~$\OK[[z]]$; this is obtained by applying Lemma~\ref{l:fixed are periodic} with~$g = f^{qp^{n - 1}}$ and~$m = p$.

\begin{proof}[Proof of Proposition~\ref{p:detailed optimality criterion}]
To prove part~$1$, note first that every periodic point of~$f$ in~$\mK \setminus \{ 0 \}$ of minimal period~$q$ is a zero of~$(f^q(z) - z)/z$.
Thus, our hypothesis~$\wideg(f^q(z) - z) = q + 1$ implies that there is at most~$1$ periodic orbit of~$f$ in~$\mK \setminus \{ 0 \}$ of minimal period~$q$.
To prove that such a periodic orbit exists, note that our hypotheses~$\lambda^q \neq 1$ and~$\wideg(f^q(z) - z) = q + 1$, imply that there is at least one zero~$w_0$ of~$(f^q(z) - z)/z$ in~$\mK \setminus \{ 0 \}$.
Then~$w_0$ is a periodic point of~$f$ of period~$q$, and Lemma~\ref{l:admissible periods} implies that the minimal period of~$w_0$ is~$q$.
This proves that there is a unique periodic orbit of~$f$ in~$\mK \setminus \{ 0 \}$ of period~$q$.
In view of our hypothesis~$\wideg(f^q(z) - z) = q + 1$, part~$1$ of Lemma~\ref{l:periodic bound} implies that~\eqref{e:fixed bound} holds with equality.
This completes the proof of part~$1$.

To prove part~$2$, put
$$ h(z) \= \frac{f^{qp^n}(z) - z}{f^{qp^{n - 1}}(z) - z}, $$
and note that every periodic point of~$f$ of minimal period~$qp^n$ is a zero of~$h$.
Thus, our hypothesis~$\wideg(h) = qp^n$ implies that there is at most~$1$ periodic orbit of~$f$ of minimal period~$qp^n$.
To prove that such a periodic orbit exist, note that our hypothesis~$\wideg(h) = qp^n$ implies that~$h$ has a zero~$z_0$ in~$\mK$.
Then~$z_0$ is also a zero of~$f^{qp^n}(z) - z$, and therefore~$z_0$ is a periodic point of~$f$ of period~$qp^n$.
Suppose by contradiction that the minimal period of~$z_0$ is not~$qp^n$.
Then Lemma~\ref{l:admissible periods} implies that~$z_0$ is of period~$qp^{n - 1}$, and therefore a zero of~$f^{qp^{n - 1}}(z) - z$.
By hypothesis we also have~$(f^{qp^n})'(z_0) \neq 1$.
On the other hand, since~$z_0$ is also a zero of~$h$, it follows that~$z_0$ is a multiple zero of~$f^{qp^n}(z) - z$.
This implies that~$z_0$ is also a zero of~$(f^{qp^n})'(z) - 1$, so~$(f^{qn})'(z_0) = 1$.
We thus obtain a contradiction that shows that the minimal period of~$z_0$ is~$qp^n$, and that there is a unique periodic orbit of~$f$ of minimal period~$qp^n$.
Finally, note that our hypothesis~$\wideg(h) = qp^n$, together with part~$2$ of Lemma~\ref{l:periodic bound}, imply that~\eqref{e:fixed bound} holds with equality.
This completes the proof of part~$2$, and of the lemma.
\end{proof}

\begin{lemm}
\label{l:generic independence}
Let~$K$ be a field, $d \ge 2$ an integer, and~$a_1$, \ldots, $a_d$ in~$K$ algebraically independent over the prime field of~$K$.
If the characteristic~$p$ of~$K$ is positive, suppose in addition that~$d \ge p$.
Then the polynomial
$$ a_1 z + \cdots + a_d z^d $$ 
in~$K[z]$ has no parabolic periodic point.
\end{lemm}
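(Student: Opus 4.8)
The plan is to mirror the proof of Lemma~\ref{l:generic independence zero characteristic}. Suppose, for a contradiction, that $P_0(z) \= a_1 z + \cdots + a_d z^d$ has a parabolic periodic point $z_0$, of period $n$ and with multiplier $\mu_0 \= (P_0^n)'(z_0)$ a root of unity. In characteristic $p$ every root of unity has order prime to $p$; if $e$ denotes the order of $\mu_0$, then replacing $n$ by $N \= ne$ gives $(P_0^N)'(z_0) = \bigl((P_0^n)'(z_0)\bigr)^e = 1$, so that $z_0$ is a repeated root of $P_0^N(z) - z$. Note that $z_0 \neq 0$, since the multiplier of $0$ is $a_1$, which is not a root of unity; also $P_0^N(z) - z$ has degree $d^N$ with leading coefficient $a_d^{(d^N - 1)/(d - 1)} \neq 0$.

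Next I would pass to the generic polynomial. Let $\Delta_N \in \F_p[X_1, \ldots, X_d]$ be the discriminant with respect to $z$ of $\bigl( \sum_{i = 1}^{d} X_i z^i \bigr)^N - z$, viewed as a polynomial of degree $d^N$. Since $P_0^N(z) - z$ has a repeated root we get $\Delta_N(a_1, \ldots, a_d) = 0$, and since $a_1, \ldots, a_d$ are algebraically independent over $\F_p$ this forces $\Delta_N = 0$ identically. It therefore suffices to show that, for each $N \ge 1$, there is a field $k \supseteq \F_p$ and $b_1, \ldots, b_d$ in $k$ with $b_d \neq 0$ such that $g(z) \= \sum_{i = 1}^{d} b_i z^i$, which has degree exactly $d$, satisfies that $g^N(z) - z$ is separable; one must keep $b_d \neq 0$, because the ``degree $d^N$'' discriminant of a polynomial of smaller degree vanishes identically.

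Two ranges of $N$ are handled by the power map $g(z) = z^d$. If $p \mid d$, then $g^N(z) - z = z^{d^N} - z$ has derivative $d^N z^{d^N - 1} - 1 = -1$ (as $p \mid d^N$), hence is separable. If $p \nmid d$ but $p \nmid d^N - 1$, then a repeated root $\xi \neq 0$ of $z^{d^N} - z$ would satisfy both $\xi^{d^N - 1} = 1$ and $d^N \xi^{d^N - 1} = 1$, giving $d^N \equiv 1 \bmod p$, a contradiction (and $\xi = 0$ is not a repeated root); so again $g^N(z) - z$ is separable.

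The remaining case, $p \nmid d$ and $p \mid d^N - 1$, is where I expect the difficulty to lie, and where the hypothesis $d \ge p$ (hence $d \ge p + 1 \ge 3$) should enter: the power map fails precisely because $d^N \equiv 1 \bmod p$ makes every periodic point of $z^d$ a repeated root of $z^{d^N} - z$. Here I would lift to characteristic zero: realize $\overline{\F}_p$ (or a suitable overfield) as the residue field of a complete, algebraically closed valued field $\mathbf{K}$ of characteristic $0$ and residue characteristic $p$, and take $g$ to be the reduction of $\mathbf{P}(z) = \sum_{i=1}^{d} \widehat{c}_i z^i$ over $\mathcal{O}_{\mathbf{K}}$ with $|\widehat{c}_d| = 1$; for a generic choice of the $\widehat{c}_i$, algebraically independent over $\Q$, Lemma~\ref{l:generic independence zero characteristic} gives that $\mathbf{P}$ has no parabolic periodic point, so $\mathbf{P}^N(z) - z$ is separable, and all $d^N$ of its roots lie in $\mathcal{O}_{\mathbf{K}}$ (the constant term of $(\mathbf{P}^N(z) - z)/z$ is $\widehat{c}_1^N - 1$, a unit for generic $\widehat{c}_1$, so the Newton polygon is flat). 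The crux is to descend separability to the reduction, i.e. to arrange that the discriminant of $\mathbf{P}^N(z) - z$ is a \emph{unit}, equivalently that the $d^N$ periodic points of period dividing $N$ reduce to $d^N$ \emph{pairwise distinct} points; it is exactly here that $d \ge p$ must be used, to rule out the coalescence of periodic points modulo the maximal ideal that occurs for power maps. This is the main obstacle. One cannot argue purely in characteristic $p$, since over $\overline{\F}_p$ every nonzero multiplier is a root of unity, so ``no parabolic periodic point'' is impossible there; and a naive lift of the \emph{generic} degree-$d$ polynomial is circular, because whether the relevant discriminant is a $p$-adic unit is equivalent to the very statement being proved. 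So the essential work will be to exhibit a non-generic lift (equivalently, a reduction) for which $d \ge p$ forces the period-$N$ points to reduce without collision.
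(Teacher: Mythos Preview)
Your proposal has a genuine gap: you explicitly leave the case $p \nmid d$, $p \mid d^N - 1$ unresolved, and your proposed lift to characteristic zero is, as you yourself note, circular. The missing idea is both simple and decisive: specialize not to $z^d$ but to $z^p$, which is available precisely because of the hypothesis $d \ge p$. In characteristic~$p$ we have $(z^p)' = 0$, so every periodic point of $z^p$ has multiplier~$0$, which is \emph{not} a root of unity. This refutes your assertion that ``one cannot argue purely in characteristic~$p$, since over $\overline{\F}_p$ every nonzero multiplier is a root of unity'': the point is exactly to arrange a \emph{zero} multiplier by using an inseparable map.

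This specialization does not fit into your discriminant framework, because setting $b_d = 0$ (when $d > p$) kills the degree-$d^N$ discriminant. The paper therefore bypasses the discriminant entirely and argues directly with the multiplier: if $\mu_0 = (P_0^n)'(z_0)$ is a root of unity, then $\mu_0$ is a unit in $\F_p[z_0, a_1, \ldots, a_d]$, and hence remains a unit under any ring homomorphism $\sigma$ to~$\overline{\F}_p$. Choosing $\sigma(a_p) = 1$ and $\sigma(a_j) = 0$ for $j \neq p$ sends $P_0$ to $z^p$ and forces $\sigma(\mu_0) = \bigl((z^p)^n\bigr)'\!\bigl(\sigma(z_0)\bigr) = 0$, a contradiction. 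If you want to stay within your discriminant setup, you would need instead to specialize to something like $z^p + \varepsilon z^d$ with $\varepsilon \neq 0$ and argue that for suitable~$\varepsilon$ the multipliers are all $\neq 1$; this can be made to work but is strictly more laborious than the paper's two-line argument.
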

\begin{proof}
When the characteristic of~$K$ is zero, the desired assertion is Lemma~\ref{l:generic independence zero characteristic}.
Suppose the characteristic~$p$ of~$K$ is positive and that~$d \ge p$.
Denote by~$\F_p$ the prime field of~$K$, and by~$\overline{\F}_p$ an algebraic closure of~$\F_p$.

Suppose by contradiction there is an integer~$n \ge 1$ and a periodic point~$z_0$ of period~$n$ of the polynomial~$P(z) \= a_1 z + \cdots + a_d z^d$ in~$K[z]$, such that~$(P^n)'(z_0)$ is a root of unity.
Let~$\sigma : \F_p[z_0, a_1, \ldots, a_d] \to \overline{\F}_p$ be a ring homomorphism such that~$\sigma(a_p) = 1$, and such that for each~$j$ in~$\{1, \ldots, d \}$ different from~$p$ we have~$\sigma(a_j) = 0$.
Then~$\sigma(P)(z) = z^p$, $\sigma(z_0)$ is a periodic point of period~$n$ of~$\sigma(P)$, and~$(\sigma(P)^n)'(\sigma(z_0)) = \sigma((P^n)'(z_0))$ is a root of unity.
On the other hand, $\sigma(P)'$ is the zero polynomial, so~$(\sigma(P)^n)'(\sigma(z_0)) = 0$.
This contradiction completes the proof of the lemma.
\end{proof}

\begin{proof}[Proof of Theorem~\ref{t:optimality and minimality}']
The uniqueness statement in part~$1$ is given by part~$2$ of Lemma~\ref{l:periodic bound}.
The rest of the assertions of part~$1$ are given by Corollary~\ref{c:optimality implies minimality}.

To prove part~$2$, let~$\alpha_0$, \ldots, $\alpha_d$ be algebraically independent over~$F$, and consider the polynomial
$$ Q(z) = \alpha_1 z + \cdots + \alpha_d z^d $$
in~$F[\alpha_1, \ldots, \alpha_d][z]$.
Let~$R_n(\alpha_0, \ldots, \alpha_d)$ in~$F[\alpha_0, \ldots, \alpha_d]$ be the resultant of the polynomials
$$ Q^{qp^{n - 1}}(z) - z
\text{ and }
\left( Q^{qp^n} \right)'(z) - 1. $$
Lemma~\ref{l:generic independence} with~$P = Q$ implies that~$R_n(\alpha_0, \ldots, \alpha_d)$ is nonzero.
If~$n$ and~$P$ are as in the statement of part~$2$ of the theorem, then
$$ \wideg \left( \frac{P^{qp^n}(z) - z}{P^{qp^{n - 1}}(z) - z} \right)
=
i_n(g^q) - i_{n - 1}(g^q)
=
qp^n, $$
and for every periodic point~$z_0$ of~$P$ of period~$qp^{n - 1}$ we
have~$\left( P^{qp^n} \right)'(z_0) \neq 1$.
Then by part~$2$ of Proposition~\ref{p:detailed optimality criterion} with~$f = P$, the polynomial~$P$ has an optimal cycle of period~$qp^n$.
This completes the proof.
\end{proof}

\subsection{Concrete polynomials having optimal cycles}
\label{ss:concrete optimality}
The purpose of this section is to exhibit concrete polynomials having optimal cycles.
The case where~$p$ is odd is covered by Proposition~\ref{p:odd optimality}, and the case~$p = 2$ by Proposition~\ref{p:even periodic optimality}.
Theorem~\ref{t:periodic optimality} is a direct consequence of these propositions.

The proof of Theorem~\ref{t:quadratic optimality and minimality} is given at the end of this section.
\begin{prop}
\label{p:odd optimality}
Let~$p$ be a prime number, and~$(K, |\cdot|)$ an algebraically closed ultrametric field of residue characteristic~$p$.
Given an integer~$q \ge 1$ that is not divisible by~$p$, let~$\lambda$ in~$K$ be transcendental over the prime field of~$K$, such that~$|\lambda| = 1$, and such that the order of~$\tlambda$ in~$\tK^*$ is equal to~$q$.
Moreover, let~$P(z)$ be the polynomial in~$K[z]$ defined by
$$ P(z) \= \lambda z \left(1 + z^q \right), $$
if~$p$ does not divide~$q + 1$, and by
$$ P(z) \= \lambda z \left(1 + z^q + z^{2q} \right) $$
otherwise.
Then the following properties hold.
\begin{enumerate}
\item[1.]
There is a unique periodic orbit of~$P$ in~$\mK \setminus \{ 0 \}$ of minimal period~$q$, and for every periodic point~$w_0$ in this orbit, inequality~\eqref{e:fixed bound} holds with equality.
\item[2.]
If~$p$ is odd, then for every~$n \ge 1$ there is a unique periodic orbit of~$P$ of minimal period~$q p^n$.
Furthermore, for every point~$z_0$ in this orbit, inequality~\eqref{e:periodic bound} holds with equality.
\end{enumerate}
\end{prop}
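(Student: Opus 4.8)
The plan is to deduce both parts from the optimality criterion of Proposition~\ref{p:detailed optimality criterion}, once two facts are in place: that the reduction~$\tP$ is minimally ramified, and that~$P$ has no parabolic periodic points of the relevant periods.

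First I would note that
$$ \tP(\zeta) = \gamma\zeta\left(1 + \zeta^q\right)
\quad\text{or}\quad
\tP(\zeta) = \gamma\zeta\left(1 + \zeta^q + \zeta^{2q}\right) $$
according as~$p \nmid q+1$ or~$p \mid q+1$, where~$\gamma \= \tlambda$ has order~$q$. Each of these is already in the normal form of Proposition~\ref{p:basic normal form}, with~$a_1 = 1 \neq 0$ (and~$a_2 = 0$, resp.~$a_2 = 1$). That proposition---or Lemma~\ref{l:q iterate} with~$\ell = q$---then gives~$i_0(\tP^q) = q$, i.e.~$\wideg(P^q(z) - z) = q+1$. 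Since~$\lambda$ is transcendental over the prime field, $\lambda^q \neq 1$, so part~$1$ of Proposition~\ref{p:detailed optimality criterion} (applied to~$f = P$) yields part~$1$ of the proposition; note this step does not use that~$p$ is odd.

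For part~$2$, assume~$p$ odd. Combining~$i_0(\tP^q) = q$ with~$\resit(\tP) = \frac{q+1}{2} - a_2$---which is~$\frac{q+1}{2} \neq 0$ when~$p \nmid q+1$ (since~$p$ is odd) and~$-1 \neq 0$ when~$p \mid q+1$ (as then~$q+1 = 0$ in~$\tK$)---Theorem~\ref{t:characterization of minimally ramified} shows~$\tP$ is minimally ramified, so~$i_n(\tP^q) = q\frac{p^{n+1}-1}{p-1}$ for all~$n \ge 0$. These orders are finite, so by the identity relating Weierstrass degrees to the~$i_n$ recalled at the beginning of~\S\ref{s:minimally ramified},
$$ \wideg\left(\frac{P^{qp^n}(z) - z}{P^{qp^{n-1}}(z) - z}\right) = i_n(\tP^q) - i_{n-1}(\tP^q) = qp^n $$
for every~$n \ge 1$. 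This is the Weierstrass degree hypothesis of part~$2$ of Proposition~\ref{p:detailed optimality criterion}; what remains is to show that no periodic point~$z_0 \in \mK$ of period~$qp^{n-1}$ satisfies~$(P^{qp^n})'(z_0) = 1$. I would establish this by induction on~$n$, carrying along the stronger statement that for each~$j \ge 0$ the polynomial~$P$ has a unique periodic orbit~$\cO_j$ of minimal period~$qp^j$ in~$\mK \setminus \{0\}$ all of whose points are simple zeros of~$P^{qp^j}(z) - z$; the base case~$j = 0$ is part~$1$ together with~$\wideg(P^q(z) - z) = q+1$. For the inductive step, a periodic point of period~$qp^{n-1}$ is either~$z = 0$, where~$(P^{qp^n})'(0) = \lambda^{qp^n} \neq 1$, or a point~$z_0 \in \cO_j$ for some~$j \le n-1$, where~$(P^{qp^n})'(z_0) = \mu_j^{p^{n-j}}$ with~$\mu_j \= (P^{qp^j})'(z_0) \neq 1$ the (nontrivial) multiplier of~$\cO_j$. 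When~$K$ has characteristic~$p$ this finishes the step, since there~$\mu_j \neq 1$ forces~$\mu_j^{p^{n-j}} \neq 1$. When~$K$ has characteristic zero I would rule out~$\mu_j^{p^{n-j}} = 1$ by a lifting argument as in Lemma~\ref{l:generic independence zero characteristic}: $z_0$, and hence~$\mu_j$, is algebraic over the prime field with~$\lambda$ adjoined, so an embedding of that field into~$\C$ sending~$\lambda$ to a transcendental number~$t$ of large absolute value would carry a hypothetical root of unity~$\mu_j$ to an indifferent cycle of~$tz(1+z^q)$ (resp.~$tz(1+z^q+z^{2q})$)---impossible, because for~$|t|$ large every critical point of this polynomial escapes to infinity, so it is hyperbolic and all its cycles are repelling. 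Granting the non-parabolicity, part~$2$ of Proposition~\ref{p:detailed optimality criterion} gives the unique optimal cycle of period~$qp^n$; its~$qp^n$ points exhaust the zeros of the quotient above, hence are simple zeros of~$P^{qp^n}(z) - z$, which maintains the induction hypothesis.

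The only delicate point is the characteristic-zero non-parabolicity step; everything else is bookkeeping on top of Propositions~\ref{p:basic normal form} and~\ref{p:detailed optimality criterion} and Theorem~\ref{t:characterization of minimally ramified}.
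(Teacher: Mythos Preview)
Your proposal is correct and follows the paper's overall strategy: verify that~$\tP$ is minimally ramified via Theorem~\ref{t:characterization of minimally ramified}, read off the Weierstrass degree conditions, and apply Proposition~\ref{p:detailed optimality criterion}. Part~$1$ is handled identically in both.

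The genuine difference is in how you discharge the non-parabolicity hypothesis of part~$2$ of Proposition~\ref{p:detailed optimality criterion}. The paper packages this into a standalone result, Lemma~\ref{l:concrete independence}, which shows directly that~$P$ has no parabolic periodic point; its proof uses a resultant--specialization argument in characteristic zero (specializing to~$z^{2q+1}$) and, in positive characteristic, a case analysis showing every periodic point is repelling (including a singular-metric argument when~$p=3$). You instead bootstrap inductively: in characteristic~$p$ you use Frobenius injectivity to pass from~$\mu_j \neq 1$ to~$\mu_j^{p^{n-j}} \neq 1$, and in characteristic zero you embed~$\Q(\lambda,z_0)$ into~$\C$ with~$\lambda \mapsto t$ of large modulus and invoke hyperbolicity (all critical points escape). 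Your positive-characteristic argument is markedly shorter than the paper's; your characteristic-zero argument trades the paper's purely algebraic resultant computation for a fact from complex dynamics. Both routes are valid, and your inductive framing has the small bonus of simultaneously recording that the new cycle consists of simple zeros, which the paper does not need to isolate since Lemma~\ref{l:concrete independence} rules out \emph{all} parabolic points at once.
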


\begin{rema}
\label{r:concrete optimality postive characteristic}
If~$K$ in the proposition above is of positive characteristic, then for~$\lambda$ in~$K$ such that~$|\lambda| = 1$ and such that the order of~$\tlambda$ in~$\tK^*$ is~$q$, the hypothesis that~$\lambda$ is transcendental over the prime field of~$K$ is equivalent to~$\lambda^q \neq 1$.
\end{rema}

\begin{rema}
In the case where~$p$ divides~$q + 1$, our results imply that the
conclusions of part~$2$ of Proposition~\ref{p:odd optimality} are
false for the polynomial $\lambda z \left(1 + z^q \right)$.
\end{rema}

\begin{prop}
\label{p:even periodic optimality}
Let~$(K, |\cdot|)$ be an algebraically closed ultrametric field of residue characteristic~$2$.
Given an odd integer~$q \ge 1$, let~$\lambda$ in~$K$ be transcendental over the prime field of~$K$, such that~$|\lambda| = 1$, and such that the order of~$\tlambda$ in~$\tK^*$ is~$q$.
In the case where the characteristic of~$K$ is zero, put
$$ Q(z) \= \lambda z \left( 1 + z^{2q} \right). $$
In the case the characteristic of~$K$ is~$2$, let~$\mu$ in~$\mK$ be algebraically independent with respect to~$\lambda$ over the prime field of~$K$, and put
$$ Q(z) \= \lambda z \left( 1 + \mu z^q + z^{2q} \right). $$
Then, for every integer~$n \ge 1$ there is a unique periodic orbit of~$Q$ of minimal period~$2^n q$.
Furthermore, for every periodic point~$z_0$ in this orbit, inequality~\eqref{e:periodic bound} holds with equality.
\end{prop}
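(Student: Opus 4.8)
In both cases the reduction of $Q$ is $\tQ(\zeta)=\tlambda\,\zeta(1+\zeta^{2q})$: in characteristic zero this is clear, and in characteristic $2$ it holds because $\mu\in\mK$, which kills the coefficient $\lambda\mu$ of $z^{q+1}$. The first step is to show that $\tQ$ is almost minimally ramified (Definition~\ref{d:almost minimally ramified}). Since $\tlambda^q=1$, Lemma~\ref{l:q iterate} with $a_1=0$, $a_2=1$ and $\ell=q$ gives $\tQ^q(\zeta)\equiv\zeta(1+q\zeta^{2q})\bmod\langle\zeta^{2q+2}\rangle$ when $q\ge2$, while $\tQ(\zeta)=\zeta(1+\zeta^2)$ directly when $q=1$; as $q$ is odd it is nonzero in $\tK$, so in either case $i_0(\tQ^q)=2q=2^{0+1}q$. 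By part~$2$ of Proposition~\ref{p:almost minimally ramified} this forces $i_n(\tQ^q)=2^{n+1}q$ for every $n\ge0$. Hence $\tQ$ is non-linear, all these orders are finite, and by the identity recorded at the start of Section~\ref{s:minimally ramified}, for every $n\ge1$
$$\wideg\left(\frac{Q^{2^nq}(z)-z}{Q^{2^{n-1}q}(z)-z}\right)=i_n(\tQ^q)-i_{n-1}(\tQ^q)=2^nq.$$

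By part~$2$ of Proposition~\ref{p:detailed optimality criterion} applied to $f=Q$ (here $p=2$, so $qp^n=2^nq$), the proposition reduces to checking that $(Q^{2^nq})'(z_0)\ne1$ for every periodic point $z_0$ of $Q$ of period $2^{n-1}q$. For $z_0=0$ this is clear, since $(Q^{2^nq})'(0)=\lambda^{2^nq}$ and $\lambda$, being transcendental over the prime field, is not a root of unity. For $z_0\ne0$ let $d=2^iq$ ($0\le i\le n-1$) be its minimal period (Lemma~\ref{l:admissible periods}) and $\eta=(Q^d)'(z_0)$, so $(Q^{2^nq})'(z_0)=\eta^{2^{n-i}}$; since periodic points of $Q$ lie in $\mK$ one has $|\eta|=1$, and $\eta^{2^{n-i}}\ne1$ whenever $\eta$ is not a $2$-power root of unity — in characteristic $2$, whenever $\eta\ne1$. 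So the whole matter comes down to bounding the multipliers of the low-period cycles of $Q$.

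In characteristic $2$ this goes as follows. If a cycle of minimal period $2^iq$ with $i\ge1$ had multiplier $1$, each of its $2^iq$ points would be a multiple root of $Q^{2^iq}(z)-z$ that is not a root of $Q^{2^{i-1}q}(z)-z$, hence a multiple root of the quotient displayed above, forcing at least $2\cdot2^iq$ roots (with multiplicity) into a series of Weierstrass degree $2^iq$ — impossible. The analogous count against $\wideg\big((Q^q(z)-z)/z\big)=2q$ shows that a multiplier-$1$ cycle $C$ of minimal period $q$ would be the \emph{unique} cycle of $Q$ of minimal period $q$; being thus stable under the Galois action, its cycle product $\prod_{z\in C}(1+z^q)$ would lie in $\F_2(\lambda,\mu)$, whereas the identity $(Q^q)'(z_0)=\lambda^q\big(\prod_{z\in C}(1+z^q)\big)^2$ — valid because $Q'(w)=\lambda(1+w^{2q})$ in characteristic $2$ — forces this product to equal $\lambda^{-q/2}$, which does not lie in $\F_2(\lambda,\mu)$ since $\lambda$ is not a square there and $q$ is odd. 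In characteristic zero one argues instead in the spirit of Lemmas~\ref{l:generic independence zero characteristic} and~\ref{l:generic independence}: embedding the transcendence-degree-one field generated by $\lambda$ together with a periodic cycle and its multiplier into $\C$, a cycle with root-of-unity multiplier would exhibit $\lambda$ as a parameter at which $tz(1+z^{2q})$ has a parabolic cycle, contradicting the fact that such parameters form a countable set (for each period and each admissible multiplier this is a nontrivial algebraic condition on $t$, failing for instance at the hyperbolic parameters near $t=0$). The step I expect to be the main obstacle is making the descent in the characteristic-$2$, minimal-period-$q$ case precise — in particular the Galois- and inseparability-bookkeeping needed to place $\prod_{z\in C}(1+z^q)$ inside $\F_2(\lambda,\mu)$.

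Granting these multiplier bounds, part~$2$ of Proposition~\ref{p:detailed optimality criterion} produces, for each $n\ge1$, a unique cycle of $Q$ of minimal period $2^nq$, which is optimal; by part~$2$ of Lemma~\ref{l:periodic bound} every point $z_0$ of this cycle then satisfies~\eqref{e:periodic bound} with equality. Combined with Proposition~\ref{p:odd optimality}, this also yields Theorem~\ref{t:periodic optimality} in residue characteristic $2$.
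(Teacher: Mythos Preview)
Your reduction (almost minimal ramification of~$\tQ$, the wideg computation, and the appeal to part~$2$ of Proposition~\ref{p:detailed optimality criterion}) matches the paper exactly. The divergence is in how you rule out multiplier~$1$ on the low-period cycles.

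In characteristic~$0$ your sketch is the same idea as the paper's, which invokes Lemma~\ref{l:concrete independence} with~$q$ replaced by~$2q$: the resultant of~$Q_t^m(z)-z$ and~$(Q_t^m)'(z)-1$ is a polynomial in~$t$ that does not vanish identically (the paper checks this by specializing so that~$Q_t$ becomes~$z^{2q+1}$), hence cannot vanish at the transcendental~$\lambda$. Your phrasing via ``hyperbolic parameters near~$t=0$'' is vaguer but points in the same direction.

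In characteristic~$2$ your counting argument for~$i\ge1$ is correct and pleasant, but the Galois step for minimal period~$q$ is a genuine gap, and I do not see how to close it along your lines. The problem is exactly the one you flag: knowing that~$C$ is the unique cycle of minimal period~$q$ in~$\mK$ does not by itself make~$C$ stable under~$\Gal(\overline{\F_2(\lambda,\mu)}/\F_2(\lambda,\mu))$, because automorphisms of the algebraic closure need not respect the norm on~$K$; so there is no immediate reason for~$\prod_{z\in C}(1+z^q)$ to land in~$\F_2(\lambda,\mu)$. One would have to identify~$C$ with the root set of a distinguished factor of~$(Q^q(z)-z)/z$ defined over~$\F_2(\lambda,\mu)$, and that factorization lives a priori only over the completion.

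The paper sidesteps this entirely with a one-line specialization trick you are missing. For each~$m\ge1$, let~$R(\lambda,\mu)\in\F_2[\lambda,\mu]$ be the resultant of~$Q^m(z)-z$ and~$(Q^m)'(z)-1$; then~$Q$ has a period-$m$ point with multiplier~$1$ iff~$R(\lambda,\mu)=0$. Now simply evaluate at~$\mu=1$ (note: not at a value in~$\mK$!): the specialized polynomial is~$\lambda z(1+z^q+z^{2q})$, which by Lemma~\ref{l:concrete independence} has no parabolic periodic point, so~$R(\lambda,1)\ne0$ and hence~$R$ is not the zero polynomial. Since~$\lambda,\mu$ are algebraically independent over~$\F_2$, it follows that~$R(\lambda,\mu)\ne0$. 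This kills all parabolic cycles of~$Q$ at once, with no case split on the minimal period and no Galois bookkeeping.
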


\begin{rema}
\label{r:concrete optimality zero characteristic}
If~$K$ in either Proposition~\ref{p:odd optimality} or~\ref{p:even periodic optimality} is of characteristic zero, then~$\lambda$ can be allowed to be algebraic over the prime field of~$K$, as long as~$\lambda$ avoids a finite set of exceptional values that depends on~$n$.
Similarly, in the case~$K$ in Proposition~\ref{p:even periodic optimality} is of characteristic~$2$, we show that for each~$n$ there is a nonzero polynomial~$R_n$ in~$\lambda$ and~$\mu$ with coefficients in the prime field of~$K$, such that the conclusions of the proposition hold whenever~$R_n(\lambda, \mu)$ is nonzero.
Thus, $\lambda$ and~$\mu$ can be allowed to be algebraic over the prime field of~$K$, as long as~$(\lambda, \mu)$ avoids a curve in~$K \times K$ that depends on~$n$.
\end{rema}

\begin{rema}
\label{r:non-example in characteristic 2}
In Appendix~\ref{s:all minimal} we show that, when~$K$ is of characteristic~$2$, the conclusions of Proposition~\ref{p:even periodic optimality} are false if we let~$\mu = 0$, see Remark~\ref{r:non-example appendix}.
\end{rema}

The following lemma is the main ingredient in the proofs of Propositions~\ref{p:odd optimality} and~\ref{p:even periodic optimality}, and of Theorem~\ref{t:quadratic optimality and minimality}.
\begin{lemm}
\label{l:concrete independence}
Let~$K$ be a field, and $\lambda$ in~$K$ that is transcendental over the prime field of~$K$.
Moreover, let~$q \ge 1$ be an integer, and let~$P(z)$ be the polynomial in~$K[z]$ defined by either
$$ P(z) \= \lambda z \left( 1 + z^q \right),
\text{ or }
P(z) \= \lambda z \left( 1 + z^q + z^{2q} \right). $$
If the characteristic~$p$ of~$K$ is positive, suppose in addition that~$p$ does not divide~$q$.
Then $P(z)$ has no parabolic periodic point.
\end{lemm}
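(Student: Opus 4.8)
The plan is to argue in the spirit of Lemmas~\ref{l:generic independence zero characteristic} and~\ref{l:generic independence}, but since the coefficients of $P$ are not algebraically independent a bare specialization of $\lambda$ is unavailable; instead I would extract a contradiction from the behaviour of $P$ at the place $\lambda = \infty$ of the rational function field generated by $\lambda$ over the prime field. So suppose, for contradiction, that $P$ has a periodic point $z_0$ of period $n \ge 1$ whose multiplier $\mu_0 \= (P^n)'(z_0)$ is a root of unity. The fixed point $z = 0$ has multiplier $P'(0) = \lambda$, which is transcendental over the prime field $F$ of $K$ and hence not a root of unity; thus $z_0 \neq 0$, and then every point $z_j \= P^j(z_0)$ of the orbit is nonzero. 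Since $z_0$ is a root of the nonzero polynomial $P^n(z) - z \in F[\lambda][z]$, the field $L \= F(\lambda)(z_0, z_1, \ldots, z_{n-1})$ is a finite extension of $F(\lambda)$ and $\mu_0 \in L$. Let $v$ be an extension to $L$ of the valuation $v_\infty$ of $F(\lambda)$ with $v_\infty(\lambda) = -1$, scaled so that $v(\lambda) = -1$. As $\mu_0$ is a root of unity it is algebraic over $F$, hence lies in the field of constants of $L$, on which $v$ is trivial; so $v(\mu_0) = 0$. The goal is to contradict this by showing $v(\mu_0) < 0$.

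Write $P(z) = \lambda z R(z^q)$ with $R(T) = 1 + T$ or $R(T) = 1 + T + T^2$, and set $T_j \= z_j^q$. Two identities drive the computation. First, multiplying $z_{j+1} = \lambda z_j R(T_j)$ around the cycle and cancelling $\prod_j z_j \neq 0$ yields $\prod_j R(T_j) = \lambda^{-n}$, hence $\sum_j v(R(T_j)) = n$. Second, $P'(z) = \lambda S(z^q)$ with $S(T) \= R(T) + qTR'(T)$, so $\mu_0 = \lambda^n \prod_j S(T_j)$ and $v(\mu_0) = -n + \sum_j v(S(T_j))$. Now I track the valuations along the orbit. (i) No $z_j$ has $v(z_j) < 0$: if $v(z_j) < 0$ then $v(R(T_j)) = (\deg R) q\, v(z_j)$, so $v(z_{j+1}) = -1 + (1 + q\deg R)v(z_j) < v(z_j) < 0$, forcing $v$ to decrease strictly around the finite cycle, which is impossible. (ii) Hence $v(z_j) \ge 0$ for all $j$, and for each $j$ with $v(z_j) = 0$ the inequality $0 \le v(z_{j+1}) = v(R(T_j)) - 1$ forces $v(R(T_j)) \ge 1$, i.e. the reduction $\overline{T_j}$ is a zero of $R$; while if $v(z_j) > 0$ then $\overline{T_j} = 0$, a zero of neither $R$ nor $S$, so $v(R(T_j)) = v(S(T_j)) = 0$. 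Therefore $v(\mu_0) = -n + \sum_{j : v(z_j) = 0} v(S(T_j))$ and $n = \sum_{j : v(z_j) = 0} v(R(T_j))$.

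It remains to compare $v(S(T_j))$ with $v(R(T_j))$ for those $j$ with $\overline{T_j}$ a zero of $R$. If $R$ is separable — which holds always for $R = 1 + T$, and for $R = 1 + T + T^2$ whenever the characteristic is not $3$ — then $R'(\overline{T_j}) \neq 0$, and since $\overline{T_j} \neq 0$ (because $R(0) = 1$) and $q \neq 0$ in the residue field (this is where $p \nmid q$ enters), we get $S(\overline{T_j}) = q\,\overline{T_j}\,R'(\overline{T_j}) \neq 0$; hence $v(S(T_j)) = 0$ for all such $j$, so $v(\mu_0) = -n < 0$, the desired contradiction. In the remaining case $R = 1 + T + T^2$ in characteristic $3$, we have $R = (T-1)^2$, the only zero is $\overline{T_j} = 1$, and $S(T) = (T-1)\big((2q+1)T - 1\big)$ with $(2q+1) - 1 = 2q \neq 0$; thus $v(S(T_j)) = v(T_j - 1) = \tfrac12 v(R(T_j))$, giving $v(\mu_0) = -n + \tfrac12 n = -\tfrac{n}{2} < 0$, again a contradiction. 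I expect the valuation bookkeeping of steps (i)–(ii) — especially the point that $v(z_j) = 0$ forces $\overline{T_j}$ to be a zero of $R$ itself, not merely of $S$, which is exactly what makes $\sum_j v(S(T_j))$ controllable — to be the main obstacle; once that is in hand, the separability/characteristic-$3$ dichotomy for $S$ at the zeros of $R$ is a short explicit computation.
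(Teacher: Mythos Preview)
Your argument is correct and is, at bottom, the same repelling-point argument the paper runs in positive characteristic (choosing~$|\lambda_0| > 1$ is exactly working at the place $\lambda = \infty$), but your packaging is tighter and more uniform. Two points of genuine difference are worth noting. First, the paper treats characteristic zero by a separate specialization (after a change of variables, sending the free parameter to~$0$ and landing on~$z \mapsto z^d$), whereas your valuation argument at~$\lambda = \infty$ covers characteristic zero and positive characteristic simultaneously. Second, your use of the \emph{cycle relation} $\prod_j R(T_j) = \lambda^{-n}$ to obtain $\sum_j v(R(T_j)) = n$ globally replaces the paper's pointwise estimate $|P_0'(w)| > 1$; this is what lets you dispense with the singular metric $\varrho(z) = |z|^{-2/3}|1 - z^q|^{-1/3}$ the paper introduces for $p = 3$ and $R = 1 + T + T^2$. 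Your observation that in that case $R = (T-1)^2$ and $S = (T-1)\big((2q+1)T - 1\big)$, giving $v(S(T_j)) = \tfrac12 v(R(T_j))$, is exactly the content of the paper's metric computation (which yields $|P_0'|_\varrho \ge |\lambda_0|^{1/3}$, hence $|\mu_0| \ge |\lambda_0|^{n/3}$, the same conclusion up to the harmless discrepancy between the exponents $1/2$ and $1/3$ coming from whether one tracks $T - 1$ or $z - \zeta_q$). The net effect is that your proof is shorter and avoids the case split on the characteristic of~$K$; the paper's approach has the minor advantage that its characteristic-zero specialization is elementary and avoids extending valuations.
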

\begin{proof}
Let~$F$ be the prime field of~$K$.
Without loss of generality assume that~$K$ is an algebraic closure of the field~$F(\lambda)$.

\partn{Case 1}
The characteristic of~$K$ is zero.
We give the proof in the case~$P(z) = \lambda z \left( 1 + z^q +
  z^{2q} \right)$.
The proof in the case~$P(z) = \lambda z \left(1 + z^q \right)$ is analogous.\footnote{Note also that the proof in Case~$2.1$, stated for the case where the characteristic of~$K$ is positive, also works in the case the characteristic of~$K$ is zero.}
Given~$\alpha$ in~$K$, consider the polynomial~$Q_{\alpha}(z) \= \alpha^2 z + \alpha z^{q + 1} + z^{2q + 1}$ in~$K[z]$.
Note that, if for~$\beta$ in~$K$ we put~$h_{\beta}(z) \= \beta z$, then
$$ h_{\beta}^{-1} \circ Q_{\beta^q} \circ h_{\beta}(z)
=
\beta^{2q} z \left( 1 + z^q + z^{2q} \right). $$
Thus, to prove that~$P(z)$ has no parabolic periodic point, it is enough to show that if~$\alpha$ is transcendental over~$F$, then~$Q_\alpha$ has no parabolic periodic point.

Let~$m \ge 1$ be an integer, and let~$R(\alpha)$ be the resultant of the polynomials
$$ Q_{\alpha}^m(z) - z
\text{ and }
\left( Q_{\alpha}^m \right)'(z) - 1, $$
viewed as a polynomial in~$\alpha$ with coefficients in~$F$.
Note that~$Q_{\alpha}$ has a periodic point of period~$m$ and multiplier~$1$ if and only if~$R(\alpha) = 0$.
Since~$\alpha$ is transcendental over~$F$, to show that~$R(\alpha)$ is different from~$0$ it is enough to show that the polynomial~$R$ is nonzero.
Note that when~$\alpha = 0$ we have~$Q_0^m(z) = z^{(2q + 1)^m}$, and that the polynomials
$$ Q_0^m(z) - z = z^{(2q + 1)^m} - z
\text{ and }
\left( Q_0^m \right)'(z) - 1 = (2q + 1)^m z^{(2q + 1)^m - 1} - 1 $$
 have no common zero.
This implies that~$R(0)$ is different from~$0$, and therefore that~$R$ is nonzero.
We conclude that~$R(\alpha)$ is different from~$0$, and that~$Q_{\alpha}$ has no periodic point of period~$m$ and multiplier~$1$.
Since~$m \ge 1$ is arbitrary, we conclude that~$Q_{\alpha}$, and hence~$P$, has no parabolic periodic point.

\partn{Case 2}
The characteristic~$p$ of~$K$ is positive.
Let~$m \ge 1$ be an integer, and let~$R(\lambda)$ be the resultant of the polynomials
\begin{equation*}
P^m(z) - z
\text{ and }
(P^m)'(z) - 1,
\end{equation*}
viewed as a polynomial in~$\lambda$ with coefficients in~$F$.
Note that~$P$ has a periodic point of period~$m$ and multiplier~$1$ if and only if~$R(\lambda) = 0$.
Since~$\lambda$ is transcendental over the prime field of~$K$, to prove that~$R(\lambda)$ is different from~$0$ it is enough to show that the polynomial~$R$ is nonzero.
To do this, endow~$K$ with a non-trivial norm~$| \cdot |$, let~$\lambda_0$ in~$K$ be such that~$|\lambda_0| > 1$, and let~$P_0(z)$ be the polynomial in~$K[z]$ defined in the same way as~$P(z)$, but with~$\lambda$ replaced by~$\lambda_0$.
We show below, in several cases, that every periodic point of~$P_0$ is repelling.
This implies that~$P_0$ has no parabolic periodic point, and therefore that~$R(\lambda_0)$ is different from~$0$.
In turn this implies that~$R$ is nonzero, that~$R(\lambda)$ is different from~$0$, and that~$P$ has no periodic point of period~$m$ and multiplier~$1$.
Since~$m \ge 1$ is an arbitrary integer, this completes the proof of the lemma.

\partn{Case 2.1}
$P_0(z) = \lambda_0 z \left( 1 + z^q \right)$.
To prove that every periodic point of~$P_0(z) \= \lambda_0 z \left( 1
  + z^q \right)$ is repelling, it is enough to show that for every
periodic point~$w$ of~$P_0$ we have~$\left| P_0'(w) \right| > 1$.
Note first that if~$w$ in~$K$ satisfies~$|w| > 1$, then
$$ |P_0(w)| = |\lambda_0| \cdot |w|^{q + 1} > |w| > 1. $$
Repeating this argument we obtain that for every integer~$\ell \ge 1$ we have~$|P^{\ell}_0(w)| > |w| > 1$, so~$w$ cannot be periodic.
On the other hand, if~$w$ is in~$\OK$ and
$$ |w| = \left| 1 + w^q \right| = 1, $$
then~$|P_0(w)| = |\lambda_0| > 1$, so by the previous consideration~$w$ cannot be periodic either.
This proves that every periodic point~$w$ of~$P_0$ we have either
$$ |w| < 1
\text{ or }
\left| 1 + w^q \right| < 1. $$
If~$|w| < 1$, then
$$ \left| P_0'(w) \right|
=
|\lambda_0| \cdot \left| 1 + (q + 1) w^q \right|
=
|\lambda_0| > 1. $$
Otherwise~$\left| 1 + w^q \right| < 1$, so~$|w^q| = 1$,
$$ \left| P_0'(w) - \lambda_0 q w^q \right|
=
|\lambda_0| \cdot |1 + w^q|
<
|\lambda_0|, $$
and therefore~$|P_0'(w)| = |\lambda_0| > 1$.
This completes the proof that every periodic point of~$P_0$ is repelling.

\partn{Case 2.2}
$P_0(z) = \lambda_0 z \left( 1 + z^q + z^{2q} \right)$ and~$p \neq 3$.
As in Case~$2.1$, to prove that every periodic point of~$P_0$ is
repelling, we prove that for every periodic point~$w$ of~$P_0$ we
have~$\left| P_0'(w) \right| > 1$.
Note first that if~$w$ is in~$K$ and~$|w| > 1$, then
$$ |P_0(w)| = |\lambda_0| \cdot |w|^{2q + 1} > |w| > 1, $$
so~$w$ cannot be periodic.
On the other hand, if~$w$ is in~$\OK$ and
$$ |w|
=
\left| 1 + w^q + w^{2q} \right|
=
1, $$
then~$|P_0(w)| = |\lambda_0| > 1$, so by the previous consideration~$w$ cannot be periodic either.
This proves that each periodic point~$w$ of~$P_0$ satisfies either
$$ |w| < 1
\text{ or }
\left| 1 + w^q + w^{2q} \right|
<
1. $$
If~$|w| < 1$, then
$$ |P_0'(w)|
=
|\lambda_0| \cdot \left| 1 + (q + 1) w^q + (2q + 1) w^{2q} \right|
=
|\lambda_0|
>
1. $$
Suppose~$\left| 1 + w^q + w^{2q} \right| < 1$, and note that, together with our assumption~$p \neq 3$, this implies~$|1 + 2 w^q| = 1$.
On the other hand,
$$ \left| P_0'(w) - \lambda_0 q w^q (1 + 2 w^q) \right|
=
|\lambda_0| \cdot \left| 1 + w^q + w^{2q} \right|
<
|\lambda_0|, $$
so~$\left| P_0'(w) \right| = |\lambda_0| > 1$.
This completes the proof that every periodic point of~$P_0$ is repelling.

\partn{Case~2.3}
$P_0(z) = \lambda_0 z \left( 1 + z^q + z^{2q} \right)$ and~$p = 3$.
Note that
$$ P_0(z) = \lambda_0 z (1 - z^q)^2, $$
and that the fixed point~$z = 0$ of~$P_0$ is repelling.

To prove that every periodic point of~$P_0$ is repelling, consider the function~$\varrho \colon K \to (0, + \infty]$ defined by
$$ \varrho (z) \= \frac{1}{|z|^{\frac{2}{3}} \cdot |1 - z^q|^{\frac{1}{3}}}, $$
viewed as a singular metric~$\varrho$ on~$K$.
Below we show that for every periodic point~$w_0$ of~$P_0$ different from~$0$, the derivative
$$ \left| P_0' \right|_{\varrho}(w_0)
\=
\left| P_0'(w_0) \right| \frac{\varrho(P_0(w_0))}{\varrho(w_0)} $$
is finite and satisfies~$\left| P_0' \right|_{\varrho}(w_0) > 1$.
Denoting the orbit of~$w_0$ by~$\cO$, this implies that the
multiplier~$\prod_{w \in \cO} P'_0(w)$ of~$w_0$ satisfies
$$ \left| \prod_{w \in \cO} P_0'(w) \right|
=
\prod_{w \in \cO} \left( \left| P'_0(w) \right|
  \frac{\varrho(P_0(w))}{\varrho(w)} \right)
=
\prod_{w \in \cO} \left| P'_0 \right|_{\varrho} (w)
>
1, $$
so~$w_0$ is repelling.
Since~$z = 0$ is a repelling fixed point of~$P_0$, it follows that every periodic point of~$P_0$ is repelling.

Let~$w_0$ be a periodic point of~$P_0$ different from~$0$, and let~$\cO$ be its orbit.
Note that every element~$w$ of~$\cO$ is different from~$0$.
On the other hand, no element of~$\cO$ can be a zero of~$1 - z^q$, because every zero of~$1 - z^q$ is mapped to~$0$ by~$P_0$.
Thus, $\varrho$ is finite on~$\cO$, and therefore~$\left| P_0'
\right|_{\varrho}$ is also finite on~$\cO$; in particular, $\left|
  P_0' \right|_{\varrho}(w_0)$ is finite.
It remains to prove~$\left| P_0' \right|_{\varrho}(w_0) > 1$.
To do this, we prove first that for each~$w$ in~$\cO$ we have either~$|w| < 1$ or~$|1 - w^q| < 1$.
Suppose by contradiction that for some~$w$ in~$\cO$ we have~$|w| > 1$.
This implies that
$$ |P_0(w)| = |\lambda| \cdot |w|^{2q + 1} > |w| > 1. $$
Repeating this argument, we conclude that for every integer~$\ell \ge
1$ we have~$\left| P_0^{\ell}(w) \right| > |w| > 1$, so~$w$ cannot be periodic.
This contradiction proves that~$\cO$ is contained in~$\OK$.
Suppose by contradiction that for some~$w$ in~$\cO$ we have~$|w| = |1 - w^q| = 1$.
Then~$|P_0(w)| = |\lambda_0| > 1$, so by the previous consideration~$w$ cannot be periodic.
This contradiction proves that for every~$w$ in~$\cO$ we have either~$|w| < 1$ or~$|1 - w^q| < 1$.
To prove~$\left| P_0' \right|_{\varrho}(w_0) > 1$, suppose first~$|w_0| < 1$.
Note that~$P_0(w_0)$ is in~$\cO$, and therefore in~$\OK$.
On the other hand, we have
$$ |P_0(w_0)| = |\lambda_0| \cdot |w_0|,
\text{ and }
\left| P_0'(w_0) \right| = |\lambda_0|, $$
so
\begin{multline*}
\left| P_0' \right|_{\varrho}(w_0)
=
|\lambda_0| \cdot \left( \frac{|w_0|}{|P_0(w_0)|}
\right)^{\frac{2}{3}} \cdot \frac{1}{\left| 1 - P_0(w_0)^q \right|^{\frac{1}{3}}}
\\ =
|\lambda_0|^{\frac{1}{3}} \cdot \frac{1}{\left| 1 - P_0(w_0)^q \right|^{\frac{1}{3}}}
\ge
|\lambda_0|^{\frac{1}{3}}
> 1.
\end{multline*}
It remains to consider the case where~$\left| 1 - w_0^q \right| < 1$.
Then there is a zero~$\zeta_q$ of~$1 - z^q$, such that~$\varepsilon \= w_0 - \zeta_q$ satisfies~$|\varepsilon| < 1$.
Note that
$$ \left| 1 - w_0^q \right| = |\varepsilon|,
|P_0(w_0)| = |\lambda_0| \cdot |\varepsilon|^2,
\text{ and }
\left| P_0'(w_0) \right| = |\lambda_0| \cdot |\varepsilon|. $$
So, we have
\begin{multline*}
\left| P_0' \right|_{\varrho}(w_0)
=
|\lambda_0| \cdot |\varepsilon| \cdot
\frac{1}{|P_0(w_0)|^{\frac{2}{3}}} \cdot \left(\frac{|1 -
    w_0^q|}{\left| 1 - P_0(w_0)^q \right|} \right)^{\frac{1}{3}}
\\ =
|\lambda_0|^{\frac{1}{3}} \cdot \frac{1}{\left| 1 - P_0(w_0)^q \right|^{\frac{1}{3}}}
\ge
|\lambda_0|^{\frac{1}{3}}
>
1.
\end{multline*}
This completes the proof of the lemma.
\end{proof}

\begin{proof}[Proof of Proposition~\ref{p:odd optimality}]
Suppose~$p$ is odd.
Theorem~\ref{t:characterization of minimally ramified} implies that~$\tP$ is minimally ramified.
Thus, $\wideg(P^q(z) - z) = q + 1$, and for every integer~$n \ge 1$ we have
$$ \wideg \left( \frac{ P^{qp^n}(z) - z }{P^{qp^{n - 1}}(z) - z} \right) = qp^n. $$
Then the desired assertions are a direct consequence of Proposition~\ref{p:detailed optimality criterion} and Lemma~\ref{l:concrete independence}.

It remains to prove part~$1$ when~$p = 2$.
Note that our hypotheses imply that~$q$ is odd, and that~$P(z) =
\lambda \left( 1 + z^q + z^{2q} \right)$.
By Lemma~\ref{l:q iterate} with~$p = 2$ and~$\ell = q$, we have~$i_0(\tP^q) = q$.
Then~$\wideg \left( P^q(z) - z \right) = q + 1$, and the desired assertion is given by part~$1$ of Proposition~\ref{p:detailed optimality criterion}.
This completes the proof of the proposition.
\end{proof}

\begin{proof}[Proof of Proposition~\ref{p:even periodic optimality}]
By Lemma~\ref{l:q iterate} with~$p = 2$ and~$\ell = q$ we
have~$i_0(\tP) = 2q$.
So by part~$2$ of Proposition~\ref{p:almost minimally ramified}, $\tP$ is almost minimally ramified.
It follows that for every integer~$n \ge 1$ we have
$$ \wideg \left( \frac{ P^{2^n q}(z) - z }{P^{2^{n - 1} q}(z) - z}
\right)
=
2^n q. $$
Thus, in view of part~$2$ of Proposition~\ref{p:detailed optimality criterion}, it is enough to prove that~$P$ has no parabolic periodic point.
If the characteristic of~$K$ is zero, this is given by Lemma~\ref{l:concrete independence} with~$p = 2$ and~$q$ replaced by~$2q$.
Suppose the characteristic of~$K$ is~$2$.
Let~$m \ge 1$ be an integer, and let~$R(\lambda, \mu)$ be the resultant of the polynomials
\begin{equation*}
Q^m(z) - z
\text{ and }
(Q^m)'(z) - 1,
\end{equation*}
viewed as a polynomial in~$\lambda$ and~$\mu$ with coefficients in the prime field~$F$ of~$K$.
Note that~$Q$ has a periodic point of period~$m$ and multiplier~$1$ if and only if~$R(\lambda, \mu) = 0$.
Since~$\mu$ is algebraically independent with respect to~$\lambda$ over~$F$, to prove that~$R(\lambda, \mu)$ is different from~$0$ it is enough to show that the polynomial~$R$ is nonzero.
To do this, let~$P(z)$ be the polynomial in~$K[z]$ defined by
$$ P(z) = \lambda \left( 1 + z^q + z^{2q} \right). $$
By Lemma~\ref{l:concrete independence} this polynomial has no parabolic periodic point.
This implies that~$R(\lambda, 1)$ is different from~$0$, and therefore that~$R$ is nonzero.
Since~$m \ge 1$ is arbitrary, this completes the proof of the proposition.
\end{proof}

\begin{proof}[Proof of Theorem~\ref{t:quadratic optimality and minimality}]
Part~$2$ is given by Corollary~\ref{c:optimality implies minimality}.
To prove part~$1$, note that the hypotheses imply that for every~$n \ge 1$ we have
$$ \wideg \left( \frac{ P_{\lambda}^{qp^n}(z) - z }{P_{\lambda}^{qp^{n - 1}}(z) - z} \right) = qp^n. $$
So, in this case, the assertions of the theorem are a direct consequence of Lemma~\ref{l:concrete independence} with~$q = 1$ and part~$2$ of Proposition~\ref{p:detailed optimality criterion}.

\end{proof}

\appendix
\section{Normalized polynomials without periodic points of high minimal period}
\label{s:all minimal}
In this appendix we give examples of normalized polynomials having no
periodic point of high minimal period (Proposition~\ref{p:all minimal}).
A consequence is that a polynomial of the form~$f(z) = \lambda z
\left( 1 + z^{2q} \right)$ cannot be used to show that inequality~\eqref{e:periodic bound} is sharp when~$p = 2$ and~$K$ is of characteristic~$2$, see Remark~\ref{r:non-example appendix} below.
\begin{prop}
\label{p:all minimal}
Let~$p$ be a prime number and~$(K, |\cdot|)$ an ultrametric field of characteristic~$p$.
Moreover, let~$\lambda$ in~$K$ be such that~$|\lambda| = 1$ and such that the order~$q$ of~$\tlambda$ in~$\tK^*$ is finite, and let~$S(z)$ be a polynomial in~$\OK[z]$ such that~$S(0) = 1$ and such that~$\wideg(S(z) - 1)$ is finite.
If in addition~$\lambda^q \neq 1$, then the minimal period of every periodic point of
$$ Q(z) \= \lambda z S(z)^p $$
in~$\mK \setminus \{ 0 \}$ is equal to~$q$.
Furthermore, the set~$F$ of all such points is non-empty and finite, and for each~$a$ in~$F$ the multiplicity~$m_a$ of~$a$ as a fixed point of~$Q^q$ is finite and divisible by~$p$, and for every integer~$n \ge 1$ the multiplicity of~$a$ as a fixed point of~$Q^{qp^n}$ is equal to~$p^n m_a$.
\end{prop}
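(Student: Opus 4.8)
The plan is to exploit the rigid shape of the iterates of $Q(z)=\lambda z S(z)^{p}$ in characteristic $p$. Extending the ground field if necessary, we may assume $K$ is algebraically closed and complete. Put $V_{k}(z)\=\prod_{l=0}^{k-1}S(Q^{l}(z))\in\OK[z]$, so $V_{k}(0)=1$; an immediate induction gives $Q^{k}(z)=\lambda^{k}zV_{k}(z)^{p}$ for all $k\ge1$, and since $V_{k}(z)^{p}$ is a power series in $z^{p}$ this also yields $z(Q^{k})'(z)=Q^{k}(z)$ and $(Q^{k})'(z)=\lambda^{k}V_{k}(z)^{p}$. The key point is the factorization, valid for every $n\ge0$ because $x\mapsto x^{p}$ is a field automorphism,
\[
Q^{qp^{n}}(z)-z=z\bigl(\lambda^{qp^{n}}V_{qp^{n}}(z)^{p}-1\bigr)=z\,\Pi_{n}(z)^{p},
\qquad
\Pi_{n}(z)\=\mu_{n}V_{qp^{n}}(z)-1,
\]
where $\mu_{n}$ is the $p$-th root of $\lambda^{qp^{n}}$ (so $\mu_{n}=\lambda^{qp^{n-1}}$ for $n\ge1$); here $\Pi_{n}\in\OK[z]$ and, since $\widetilde{\mu_{n}}=1$ while $\mu_{n}\neq1$ (as $\lambda^{q}\neq1$), $\Pi_{n}(0)\in\mK\setminus\{0\}$. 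Hence $z=0$ is a simple zero of $Q^{qp^{n}}(z)-z$, and its nonzero zeros in $\mK$ are exactly the zeros of $\Pi_{n}$ in $\mK$, each with multiplicity $p$ times its multiplicity as a zero of $\Pi_{n}$. Since $\widetilde{S}\neq1$ (as $\wideg(S-1)<\infty$) we have $\deg\widetilde{Q}=1+p\deg\widetilde{S}\ge1+p\ge3$, so the reductions $\widetilde{Q}^{qp^{n}}$ are non-linear and all the $\wideg(Q^{qp^{n}}(z)-z)$ are finite; reducing the factorization for $n=0$ gives $\wideg(Q^{q}(z)-z)=1+p\delta$ with $\delta\=\wideg(V_{q}-1)\ge1$ finite, hence $\wideg(\Pi_{0})=\delta\ge1$. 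This already shows that $F$, the zero set of $\Pi_{0}$ in $\mK$, is finite and non-empty and that $m_{a}=p\cdot\mathrm{mult}_{a}(\Pi_{0})$ is finite and divisible by $p$.

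The core of the argument is the following one-step statement, which I would prove separately: \emph{if $G(z)=\mu zW(z)^{p}\in\OK[z]$ with $\mu\in\OK^{\ast}$, $\widetilde{\mu}=1$, $\mu\neq1$, $W(0)=1$ and $\widetilde{W}\neq1$ (over $K$ algebraically closed and complete of characteristic~$p$), then $G$ has no periodic point of minimal period $p$ in $\mK\setminus\{0\}$, and $\mathrm{mult}_{a}(G^{p}(z)-z)=p\cdot\mathrm{mult}_{a}(G(z)-z)$ for every nonzero fixed point $a$ of $G$.} To prove it, write as above $G(z)-z=z\Psi(z)^{p}$ and $G^{p}(z)-z=z\widehat{\Psi}(z)^{p}$ with $\Psi,\widehat{\Psi}\in\OK[z]$, $\widehat{\Psi}(0)\neq0$, and put $e\=\wideg(W-1)\ge1$. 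Reducing the first factorization gives $i_{0}(\widetilde{G})=pe$, which is divisible by $p$, so part~$2$ of Lemma~\ref{l:keating} gives $i_{1}(\widetilde{G})=p^{2}e$; hence $\wideg(\Psi)=e$ and $\wideg(\widehat{\Psi})=pe$. Let $a_{1},\dots,a_{r}$ be the zeros of $\Psi$ in $\mK$, which are exactly the nonzero fixed points of $G$ (note $G(z)=zG'(z)$, so $G'(a_{i})=1$); put $\psi_{i}\=\mathrm{mult}_{a_{i}}(\Psi)$, so $\sum_{i}\psi_{i}=e$ and $\mathrm{mult}_{a_{i}}(G(z)-z)=p\psi_{i}$. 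The germ $g_{i}(\zeta)\=G(\zeta+a_{i})-a_{i}$ then satisfies $g_{i}'(0)=1$ and $i_{0}(g_{i})=\mathrm{mult}_{a_{i}}(G(z)-z)-1=p\psi_{i}-1$, which is not divisible by $p$; by part~$1$ of Lemma~\ref{l:keating} (with $q=1$), $i_{1}(g_{i})\ge p(p\psi_{i}-1)+1$, i.e. $\mathrm{mult}_{a_{i}}(G^{p}(z)-z)=i_{1}(g_{i})+1\ge p(p\psi_{i}-1)+2$. Since this multiplicity equals $p\cdot\mathrm{mult}_{a_{i}}(\widehat{\Psi})$ and the latter is an integer, $\mathrm{mult}_{a_{i}}(\widehat{\Psi})\ge p\psi_{i}$. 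As each $a_{i}$ is also a zero of $\widehat{\Psi}$,
\[
pe=\wideg(\widehat{\Psi})\ \ge\ \textstyle\sum_{i}\mathrm{mult}_{a_{i}}(\widehat{\Psi})\ \ge\ \sum_{i}p\psi_{i}=pe,
\]
forcing equality throughout: $\widehat{\Psi}$ has no zeros in $\mK$ besides the $a_{i}$, and $\mathrm{mult}_{a_{i}}(\widehat{\Psi})=p\psi_{i}$ for every $i$. The first conclusion says every nonzero fixed point of $G^{p}$ in $\mK$ is a fixed point of $G$ — so $G$ has no periodic point of minimal period $p$ in $\mK\setminus\{0\}$ — and the second gives the claimed multiplicity identity.

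Finally I would feed the proposition into this lemma. For each $n\ge1$ the polynomial $G=Q^{qp^{n-1}}=\lambda^{qp^{n-1}}zV_{qp^{n-1}}(z)^{p}$ has the required shape, with $\widetilde{W}=\widetilde{V}_{qp^{n-1}}\neq1$ (else $\widetilde{G}(\zeta)=\zeta$, contradicting $\deg\widetilde{G}=(\deg\widetilde{Q})^{qp^{n-1}}>1$). A point of $\mK\setminus\{0\}$ has minimal period $p$ for $Q^{qp^{n-1}}$ exactly when it has minimal period $qp^{n}$ for $Q$ (using Lemma~\ref{l:admissible periods}, which confines the minimal periods of $Q$ to the values $qp^{m}$), so the lemma shows $Q$ has no periodic point of minimal period $qp^{n}$ for any $n\ge1$; with Lemma~\ref{l:admissible periods} again, every periodic point of $Q$ in $\mK\setminus\{0\}$ has minimal period $q$, so $F$ is the set of all of them. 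And since every $a\in F$ is a nonzero fixed point of each $Q^{qp^{n-1}}$, the lemma gives $\mathrm{mult}_{a}(Q^{qp^{n}}(z)-z)=p\cdot\mathrm{mult}_{a}(Q^{qp^{n-1}}(z)-z)$, so induction on $n$ yields $\mathrm{mult}_{a}(Q^{qp^{n}}(z)-z)=p^{n}m_{a}$. The one genuinely delicate point is the multiplicity lower bound $\mathrm{mult}_{a_{i}}(\widehat{\Psi})\ge p\psi_{i}$ in the lemma: a direct Taylor expansion of $\Pi_{n}$ near $a_{i}$ runs into awkward cancellations among characteristic-$p$ binomial coefficients, which is why I would obtain the estimate instead from Keating's inequality applied to the translated germ $g_{i}$.
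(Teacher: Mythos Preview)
Your proof is correct and takes a genuinely different route from the paper's.

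The paper proceeds by explicit computation of the iterates: after writing $Q^q(z)=z(1+A(z)T_0(z))^p$ with $A(z)=\prod_{a\in F}(z-a)^{m_a/p}$ and $|T_0(0)|=1$, it establishes an iteration lemma (Lemma~\ref{lemmapowerofpgeneralT}) showing that $Q^{qp^n}(z)=z(1+A(z)^{p^n}T_n(z))^p$ with $|T_n(0)|=1$ for every~$n$. This formula makes both the absence of new periodic points and the multiplicity statement immediate, since the nonzero fixed points of $Q^{qp^n}$ in~$\mK$ are exactly the zeros of~$A$ with multiplicities scaled by~$p^{n+1}$.

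Your argument instead sets up a global--local squeeze. Globally, you use part~2 of Lemma~\ref{l:keating} applied to the \emph{reduction}~$\widetilde{G}$ (with $i_0(\widetilde{G})=pe$ divisible by~$p$) to pin down $\wideg(\widehat{\Psi})=pe$ exactly. Locally, you translate to each fixed point~$a_i$, observe that $g_i'(0)=1$ thanks to the identity $zG'(z)=G(z)$, and apply part~1 of the same lemma over the field~$K$ itself (which has characteristic~$p$) to the germ~$g_i$: since $i_0(g_i)=p\psi_i-1$ is coprime to~$p$, you get $i_1(g_i)\ge p(p\psi_i-1)+1$, which after dividing by~$p$ and using integrality forces $\mathrm{mult}_{a_i}(\widehat{\Psi})\ge p\psi_i$. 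Summing saturates the global count, so there are no extra zeros and the local bounds are equalities.

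Both approaches are clean. The paper's is more constructive and gives an explicit shape for the iterates that could be reused elsewhere; yours is shorter, avoids the auxiliary Lemmas~\ref{lemmatildeQk}--\ref{lemmapowerofpgeneralT} entirely, and nicely illustrates that Lemma~\ref{l:keating} already encodes enough to control both the reduction and the local multiplicities simultaneously. The one subtle point worth making explicit in a final write-up is that you invoke Lemma~\ref{l:keating} twice over different base fields---once over~$\widetilde{K}$ for~$\widetilde{G}$ and once over~$K$ for the translated germs~$g_i$---which is legitimate precisely because both fields have characteristic~$p$.
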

Note that the hypotheses of this proposition imply that~$\lambda$ is not a root of unity.
Thus~$z = 0$ is an irrationally indifferent fixed point of~$Q$ and therefore for every integer~$k \ge 1$ the multiplicity of~$z = 0$ as a fixed point of~$Q^k$ is equal to~$1$.
\begin{rema}
\label{r:non-example appendix}
Letting $p = 2$ and~$S(z) = 1 + z^q$ in Proposition~\ref{p:all
  minimal}, we obtain that for every integer~$n \ge 1$ the
polynomial~$Q(z) = \lambda z \left( 1 + z^{2q} \right)$ has no periodic point of minimal period equal to~$qp^n$.
This shows that the conclusions of Proposition~\ref{p:even periodic optimality} are false if we let~$\mu = 0$.
\end{rema}
\begin{rema}
\label{r:finitely many}
For~$Q$ as in Proposition~\ref{p:all minimal}, the set~$\mK$ is an indifferent component of the Fatou set of~$Q$.\footnote{This follows from the fact that~$Q$ has integer coefficients and that its reduction is of degree at least~$2$, see for example~\cite[Propositions~$3.18$ and~$5.2$]{Rivthese}.}
So Proposition~\ref{p:all minimal} shows that~$Q$ has an indifferent
component of the Fatou set that only has finitely many periodic
points.\footnote{Other examples of such Fatou components, which
  however only contain parabolic periodic points, can be obtained as
  follows. Consider a polynomial~$P(z)$ in~$K(z)$ of degree at
  least~$2$ whose coefficients are algebraic over the prime field
  of~$K$, and such that~$z = 0$ is an indifferent fixed point
  of~$P$. Then~$P$ has good reduction and therefore~$\mK$ is a Fatou
  component of~$P$, see for
  example~\cite[\S$4.5$]{Rivthese}. Furthermore, $z = 0$ is the only
  periodic point of~$P$ in~$\mK$. In fact, if we denote by~$q \ge 1$
  the order of~$P'(0)$, then for every integer~$k \ge 1$ the
  multiplicity of~$z = 0$ as a fixed point of~$P^{qk}$ coincides
  with~$\wideg \left( P^{qk}(z) - z \right)$. Together with Lemma~\ref{l:admissible periods}, this implies that~$z = 0$ is the only periodic point of~$P$ in~$\mK$.}
In contrast, in the $p$-adic case every indifferent component of the Fatou set contains infinitely many periodic points, see~\cite[\emph{Corollaire}~$5.13$]{Rivthese}.  
\end{rema}

The rest of this appendix is devoted to the proof of Proposition~\ref{p:all minimal}.
The following lemma is the main ingredient in the proof.
Recall that for an ultrametric field~$K$ and a polynomial~$P(z)$ in~$\OK[z]$, we use~$\left\langle P(z) \right\rangle$ to denote the ideal of~$\OK[z]$ generated by this polynomial.
\begin{lemm}
\label{lemmapowerofpgeneralT}
Let~$K$ be an ultrametric field characteristic~$p$, let~$A(z)$ and~$T_0(z)$ be polynomials in~$\OK[z]$, and put
\[
Q_{\dag}(z)
\=
z \left( 1 + A(z) \cdot T_0(z) \right)^p.
\]
Then for each integer $n \geq 1$ there exists a polynomial~$T_n(z)$ in~$\OK[z]$ such that
\begin{equation}
\label{defRngeneralT}
Q_{\dag}^{p^n}(z)
=
z \left( 1 + A(z)^{p^n} \cdot T_n(z) \right)^p.
\end{equation}
Moreover, $T_1(0) = T_0(0)^p$, and for~$n \ge 2$ we have in~$\OK[z]$
\begin{equation}
\label{superformulageneralT}
T_n (z)
\equiv
T_{n - 1}(z)^p  \mod  \left\langle z A(z)^{p^{n - 1}} \right\rangle.
\end{equation}
\end{lemm}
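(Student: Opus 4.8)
The plan is to induct on $n$, using throughout that $\OK$ has characteristic $p$ (since $K$ does), so the Frobenius $x \mapsto x^{p}$ is a ring endomorphism of $\OK[z]$: in particular $(1 + B)^{p} = 1 + B^{p}$ and $(x + y)^{p^{m}} = x^{p^{m}} + y^{p^{m}}$ for all $B, x, y \in \OK[z]$, while $\binom{p}{i} = 0$ in $\OK$ for $1 \le i \le p - 1$ and $p \cdot c = 0$ for every $c \in \OK[z]$. When $A(z) = 0$ the map is $Q_{\dag}(z) = z$ and one may take $T_{n}(z) \= T_{0}(z)^{p^{n}}$, so from now on I would assume $A(z) \neq 0$. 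First I would record a shape‑preservation observation: if $R(z) = z N(z)^{p}$ with $N \in \OK[z]$, then for every $\ell \ge 1$ one has $R^{\ell}(z) = z \bigl( \prod_{j = 0}^{\ell - 1} N(R^{j}(z)) \bigr)^{p}$ in $\OK[z]$, proved by a one‑line induction on $\ell$ from $R^{\ell + 1}(z) = R^{\ell}(z) \cdot N(R^{\ell}(z))^{p}$. In particular every iterate of $Q_{\dag}$ is again $z$ times a $p$‑th power of a polynomial in $\OK[z]$; the point of the lemma is that the factor accumulated after exactly $p^{n}$ steps has the rigid shape $1 + A(z)^{p^{n}} T_{n}(z)$.

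For the inductive step, suppose $Q_{\dag}^{p^{n}}(z) = z N(z)^{p}$ with $N(z) = 1 + A(z)^{p^{n}} T_{n}(z)$, $T_{n} \in \OK[z]$ (the case $n = 0$ being the hypothesis of the lemma). Write $z_{k} \= Q_{\dag}^{k p^{n}}(z)$, so $z_{0} = z$, and apply the shape‑preservation observation with $R = Q_{\dag}^{p^{n}}$ and $\ell = p$ to get
\[
Q_{\dag}^{p^{n + 1}}(z) = z \Bigl( \prod_{k = 0}^{p - 1} N(z_{k}) \Bigr)^{p}.
\]
Because $z_{k + 1} - z_{k} = z_{k} \bigl( N(z_{k})^{p} - 1 \bigr) = z_{k} \bigl( N(z_{k}) - 1 \bigr)^{p} = z_{k} A(z_{k})^{p^{n + 1}} T_{n}(z_{k})^{p}$, a sub‑induction on $k$ gives $z_{k} \equiv z$ modulo $\langle z A(z)^{p^{n + 1}} \rangle$; here one uses that $z_{k} \equiv z$ modulo $\langle A(z)^{p^{n + 1}} \rangle$ forces $A(z) \mid A(z_{k})$, hence $A(z)^{p^{n + 1}} \mid A(z_{k})^{p^{n + 1}}$, and that $z \mid z_{k}$. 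Feeding this back, $A(z_{k}) - A(z)$ and $T_{n}(z_{k}) - T_{n}(z)$ are divisible by $z_{k} - z$; raising $A(z_{k}) = A(z) + z A(z)^{p^{n + 1}} \rho_{k}$ to the power $p^{n}$ by Frobenius then yields $N(z_{k}) = 1 + A(z)^{p^{n}} \eta_{k}$ with $\eta_{k} \in \OK[z]$ and $\eta_{k} \equiv T_{n}(z)$ modulo $\langle z A(z)^{p^{n + 1}} \rangle$ when $n \ge 1$. For $n = 0$ this last congruence only holds modulo $\langle z A(z)^{p - 1} \rangle$, because the exponent $p^{2 n + 1} - p^{n}$ that appears equals $p - 1$ rather than being $\ge p^{n + 1}$; this is the one place where the first step differs from the rest.

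Next I would expand $\prod_{k = 0}^{p - 1} (1 + A^{p^{n}} \eta_{k}) = 1 + \sum_{i = 1}^{p} A^{i p^{n}} e_{i}$, where $e_{i}$ is the $i$‑th elementary symmetric function of $\eta_{0}, \dots, \eta_{p - 1}$. For $1 \le i \le p - 1$, the congruences $\eta_{k} \equiv T_{n}(z)$ give $e_{i} \equiv \binom{p}{i} T_{n}(z)^{i} = 0$ modulo $\langle z A^{p^{n + 1}} \rangle$ (modulo $\langle z A^{p - 1} \rangle$ when $n = 0$); combined with the arithmetic $i p^{n} + p^{n + 1} \ge p^{n + 1}$ (resp. $i + (p - 1) \ge p$), this makes each $A^{i p^{n}} e_{i}$, as well as the top term $A^{p^{n + 1}} e_{p}$, equal to $A(z)^{p^{n + 1}}$ times an explicit element of $\OK[z]$ — crucially, no division by $A(z)$ is performed. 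Collecting, $\prod_{k = 0}^{p - 1} N(z_{k}) = 1 + A(z)^{p^{n + 1}} T_{n + 1}(z)$ with $T_{n + 1} \in \OK[z]$, and the contributions with $1 \le i \le p - 1$ lie in $\langle z A(z)^{p^{n}} \rangle$, so $T_{n + 1} \equiv e_{p} = \prod_{k = 0}^{p - 1} \eta_{k}$ modulo $\langle z A(z)^{p^{n}} \rangle$. For $n \ge 1$ this gives $T_{n + 1} \equiv \prod_{k} T_{n}(z) = T_{n}(z)^{p}$ modulo $\langle z A(z)^{p^{n}} \rangle$, the asserted congruence; for $n = 0$, evaluating at $z = 0$ (where $z_{k} = 0$ and the correction in $\eta_{k}$ is divisible by $z$, so $\eta_{k}(0) = T_{0}(0)$) gives $T_{1}(0) = T_{0}(0)^{p}$.

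The main obstacle is the bookkeeping of powers of $A(z)$ together with the factor of $z$: every reduction must be carried out modulo an ideal of the precise shape $\langle z A(z)^{e} \rangle$, not merely $\langle A(z)^{e} \rangle$, since both the congruence $T_{n} \equiv T_{n - 1}^{p}$ and the normalization $T_{1}(0) = T_{0}(0)^{p}$ fail without that extra $z$; and one must exhibit each $T_{n + 1}$ directly as a polynomial combination of the $\eta_{k}$ with coefficients in $\OK$, never dividing by $A(z)$. A secondary nuisance, as noted, is that the first step $n = 0$ genuinely differs from $n \ge 1$ (the exponent $p^{2 n + 1} - p^{n}$ degenerates to $p - 1$), so that step is finished by the separate evaluation‑at‑$z = 0$ argument.
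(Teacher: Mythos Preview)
Your argument is correct and shares the paper's inductive skeleton: both use the shape--preservation identity $Q_{\dag}^{p^{n+1}}(z) = z\bigl(\prod_{k=0}^{p-1} N(z_k)\bigr)^{p}$ with $N = 1 + A^{p^{n}} T_n$, and both rest on the approximation $z_k \equiv z \bmod \langle z A^{p^{n+1}}\rangle$. The extraction of $T_{n+1}$ is organized differently, however. The paper replaces $A(z_k)^{p^{n}}$ and $T_n(z_k)$ by their values at~$z$ to obtain the congruence $Q_{\dag}^{p^{n+1}}(z) \equiv z\bigl(1 + A^{p^{n+1}} T_n^{p}\bigr)^{p} \bmod \langle z^{p+1} A^{m_n}\rangle$, then appeals to Lemma~\ref{lemmatildeQk} to write $Q_{\dag}^{p^{n+1}} = z U^{p}$ and reads $T_{n+1}$ off from~$U$; this last step amounts to an implicit $p$-th--root extraction in $\OK[z]$. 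You instead use Frobenius on $A(z_k) = A(z) + z A^{p^{n+1}}\rho_k$ to get the \emph{exact} factorization $N(z_k) = 1 + A(z)^{p^{n}}\eta_k$ with $\eta_k \in \OK[z]$, and then expand $\prod_k (1 + A^{p^{n}}\eta_k)$ via elementary symmetric functions, so that $\binom{p}{i} = 0$ for $1 \le i \le p-1$ makes the middle terms visibly divisible by $A^{p^{n+1}}$ times an element of $\OK[z]$. This exhibits $T_{n+1}$ directly as a polynomial combination, never dividing by~$A$ or taking roots, and makes the special treatment of $n = 0$ (where $p^{2n+1} - p^{n}$ degenerates to $p - 1$) cleanly isolated to the evaluation at $z = 0$. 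The two arguments are close in spirit; yours is a tidy repackaging of the same estimates that sidesteps the one step the paper leaves somewhat implicit.
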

The proof of this lemma is below, after the following one.
\begin{lemm}
\label{lemmatildeQk}
Let~$K$ be an ultrametric field of characteristic~$p$, let~$U_{\#}(z)$ be a polynomial in~$\OK[z]$, and put~$Q_{\#}(z) \= z U_{\#}(z)^p$.
Then for each integer $k\geq 1$ there is a polynomial~$U(z)$ in~$\OK[z]$ such that~$Q_{\#}^k(z) = z U(z)^p$.
\end{lemm}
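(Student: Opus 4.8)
The plan is to prove the statement by a short induction on $k$. The base case $k = 1$ is immediate: one takes $U(z) \= U_{\#}(z)$, and then $Q_{\#}^1(z) = Q_{\#}(z) = z U_{\#}(z)^p = z U(z)^p$ by definition.

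For the inductive step, I would suppose that $k \ge 1$ is such that there is a polynomial $U(z)$ in $\OK[z]$ with $Q_{\#}^k(z) = z U(z)^p$, and then compute $Q_{\#}^{k + 1}(z) = Q_{\#}\!\left( Q_{\#}^k(z) \right)$ by substituting $w = Q_{\#}^k(z) = z U(z)^p$ into the defining identity $Q_{\#}(w) = w U_{\#}(w)^p$. This yields
$$ Q_{\#}^{k + 1}(z) = z U(z)^p \cdot U_{\#}\!\left( z U(z)^p \right)^p = z \left( U(z) \cdot U_{\#}\!\left( z U(z)^p \right) \right)^p, $$
where the last equality uses only that $(xy)^p = x^p y^p$ in the commutative ring $\OK[z]$. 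Since $\OK[z]$ is closed under addition, multiplication, and composition of polynomials, the expression $U(z) \cdot U_{\#}\!\left( z U(z)^p \right)$ is again a polynomial in $\OK[z]$, and taking it as the new $U(z)$ completes the induction.

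There is essentially no obstacle here; the only points to keep straight are that each intermediate expression has coefficients in $\OK$ (clear, since everything in sight is built from $z$, $U_{\#}$, and a previously constructed polynomial by ring operations and substitution) and the identity $(xy)^p = x^p y^p$, which holds in any commutative ring and is precisely what allows the factorization into a perfect $p$-th power to persist under one more iteration. I note that the characteristic $p$ hypothesis is not actually needed for this lemma, although it is harmless to retain it, since the lemma feeds into Lemma~\ref{lemmapowerofpgeneralT}, where the finer structure of the $p$-th power factor is analyzed modulo powers of $A(z)$ in a setting of characteristic $p$.
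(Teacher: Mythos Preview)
Your proof is correct and follows essentially the same approach as the paper: induction on~$k$, with the inductive step computing $Q_{\#}^{k+1}(z) = Q_{\#}^k(z)\,U_{\#}\!\left(Q_{\#}^k(z)\right)^p = z\left(U(z)\,U_{\#}\!\left(Q_{\#}^k(z)\right)\right)^p$. Your remark that the characteristic~$p$ hypothesis is not actually used here is accurate, though the paper does not comment on this.
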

\begin{proof}
We proceed by induction in~$k$.
The desired property is satisfied with~$k = 1$ by definition of~$Q_{\#}$.
Given an integer~$k \ge 1$, suppose there is a polynomial~$U(z)$ in~$\OK[z]$ such that~$Q_{\#}^k(z) = z U(z)^p$.
Then
\[
Q_{\#}^{k+1}(z)
=
Q_{\#}^{k}(z) U_{\#} \left( Q_{\#}^k(z) \right)^p
=
z \left( U(z) U_{\#} \left( Q_{\#}^k(z) \right) \right)^p.
\]
This completes the proof of the induction step and of the lemma.
\end{proof}
\begin{proof}[Proof of Lemma~\ref{lemmapowerofpgeneralT}]
We prove the first assertion of the lemma by induction.
Let~$n \ge 0$ be an integer for which there is a polynomial~$T_n(z)$ in~$\OK[z]$ satisfying~\eqref{defRngeneralT}.
We prove that there is a polynomial~$T_{n + 1}$ in~$\OK[z]$ satisfying~\eqref{defRngeneralT} with~$n$ replaced by~$n + 1$.
To do this, we prove by induction that for every integer~$j \ge 1$, we have
\begin{equation}\label{QjpnproofTn}
Q_{\dag}^{jp^n}(z)
\equiv
z \prod_{k=0}^{j-1} \left[ 1 + A(z)^{p^n} \cdot T_n \left( Q_{\dag}^{kp^n}(z) \right) \right]^p
\mod \left\langle z^{p + 1} A(z)^{p^{2n+2}}\right\rangle.
\end{equation}
By the first induction hypothesis this holds for $j=1$.
Suppose it holds for an integer~$j \ge 1$.
Using the first induction hypothesis again, we have
\begin{equation}
\label{e:inductive of power iterate}
Q_{\dag}^{(j+1)p^{n}}(z)
=
Q_{\dag}^{jp^n}(z) \left[ 1 + A \left( Q_{\dag}^{jp^n}(z) \right)^{p^n} \cdot T_n \left( Q_{\dag}^{jp^n}(z) \right) \right]^p.
\end{equation}
On the other hand, by the second induction hypothesis we have
\begin{equation}
\label{e:coarsest approximation}
Q_{\dag}^{jp^{n}}(z)
\equiv
z
\mod \left\langle z A(z)^{p^{n+1}} \right\rangle.
\end{equation}
Consequently
\begin{equation*}
\begin{split}
A \left( Q_{\dag}^{jp^n}(z) \right)
\equiv
A(z)
\mod \left\langle z A(z)^{p^{n+1}} \right\rangle,
\end{split}
\end{equation*}
and therefore
\begin{equation*}
\begin{split}
A \left( Q_{\dag}^{jp^n}(z) \right)^{p^{n}}
\equiv
A(z)^{p^{n}}
\mod \left\langle z A(z)^{p^{2n+1}}\right\rangle.
\end{split}
\end{equation*}
Together with~\eqref{QjpnproofTn} and~\eqref{e:inductive of power iterate}, we obtain~\eqref{QjpnproofTn} with~$j$ replaced by~$j + 1$.
This proves the induction step of the second induction, and shows that~\eqref{QjpnproofTn} holds for every integer~$j \ge 1$.
To complete the proof of the induction step of the first induction, note that for each integer~$k \ge 1$ we have by~\eqref{e:coarsest approximation} with~$j$ replaced by~$k$
\[
T_n \left( Q_{\dag}^{kp^n}(z) \right)
\equiv 
T_n(z)
\mod \left\langle z A(z)^{p^{n+1}} \right\rangle.
\]
So, if we put~$m_n \= \min \left\{ p^{2n + 2}, p^{n + 2} + p^{n + 1} \right\}$, then by~\eqref{QjpnproofTn} we have
$$ Q_{\dag}^{j p^{n}}(z)
\equiv
z \left( 1 + A(z)^{p^{n}} \cdot T_n(z) \right)^{jp}
\mod \left\langle z^{p + 1} A(z)^{m_n} \right\rangle. $$
Taking~$j = p$ we obtain,
$$ Q_{\dag}^{p^{n + 1}}(z)
\equiv
z \left( 1 + A(z)^{p^{n + 1}} \cdot T_n(z)^p \right)^{p}
\mod \left\langle z^{p + 1} A(z)^{m_n}\right\rangle. $$
Since~$m_n \ge p^{n + 2}$, using Lemma~\ref{lemmatildeQk} with~$U_{\#}(z) = 1 + A(z)T_0(z)$ and~$k = p^{n + 1}$ we conclude that there is a polynomial~$T_{n+1}(z)$ in~$\OK[z]$ for which~\eqref{defRngeneralT} is satisfied with~$n$ replaced by~$n + 1$.
We have thus completed the proof of the first induction step and, as a consequence, showed that~\eqref{defRngeneralT} holds for every integer~$n \ge 1$.

When~$n = 0$ we have~$m_0 = p^2$ and the last displayed equation implies that~$T_1(0) = T_0(0)^p$.
On the other hand, note that for~$n \ge 2$ we have~$m_{n - 1} = p^{n + 1} + p^n$, so the last displayed equation with~$n$ replaced by~$n - 1$ combined with Lemma~\ref{lemmatildeQk} with~$U_{\#}(z) = 1 + A(z) T_0(z)$ and~$k = p^n$, shows~\eqref{superformulageneralT} for~$n \ge 2$.
This completes the proof of the lemma.
\end{proof}
\begin{proof}[Proof of Proposition~\ref{p:all minimal}]
Extending~$K$ if necessary, assume~$K$ is algebraically closed and complete with respect to~$| \cdot |$.
Let~$\eta$ in~$K$ be such that~$\eta^p = \lambda$, so that
$$ Q(z) = z \left( \eta S(z) \right)^p
\text{ and }
\frac{Q(z) - z}{z} = \left( \eta S(z) - 1 \right)^p. $$
In the case~$q \ge 2$, it follows that the polynomial~$Q$ has no fixed point in~$\mK \setminus \{ 0 \}$.
Thus, in view of Lemma~\ref{l:admissible periods}, to prove that the minimal period of every periodic point of~$Q$ in~$\mK \setminus \{ 0 \}$ is equal to~$q$, we just need to show that for every integer~$n \ge 1$, every fixed point of~$Q^{qp^n}$ in~$\mK$ is also a fixed point of~$Q^q$.
To do this, note that by Lemma~\ref{lemmatildeQk} with~$U_{\#}(z) = \eta S(z)$ and~$k = q$ there is a polynomial~$U(z)$ in~$\OK[z]$ such that~$Q^q(z) = z U(z)^p$.
Our hypothesis~$S(0) = 1$ implies that~$U(0) = 1$.
On the other hand, our hypothesis that~$\wideg(S(z) - 1)$ is finite implies that~$\wideg(Q)$, and hence~$\wideg(Q^q)$, are both finite and greater than or equal to~$2$.
In turn, this implies that~$\wideg(U(z) - 1)$ is finite and nonzero.
Thus the set~$F$ of fixed points of~$Q^q$ in~$\mK \setminus \{ 0 \}$ is non-empty and finite, and for each~$a$ in~$F$ the multiplicity~$m_a$ of~$a$ as a fixed point of~$Q^q$ is finite and divisible by~$p$.
Put
$$ A(z) \= \prod_{a \in F} (z - a)^{\frac{m_a}{p}}, $$
and note that by applying Lemma~\ref{l:elementary division} repeatedly, it follows that there is a polynomial~$T_0(z)$ in~$\OK(z)$ such that
$$ |T_0(0)| = 1
\text{ and }
Q^q(z) = z \left( 1 + A(z) \cdot T_0(z) \right)^p. $$
So we can apply Lemma~\ref{lemmapowerofpgeneralT} with~$Q_{\dag} = Q^q$.
We obtain that for each integer~$n \ge 1$ there is a polynomial~$T_n(z)$ in~$\OK[z]$ such that
\begin{equation}
\label{e:power iterate}
Q^{qp^n}(z)
=
z \left( 1 + A(z)^{p^n} \cdot T_n(z) \right)^p.
\end{equation}
Moreover, by an induction argument we conclude that for every~$n \ge 1$ we have~$T_n(0) = T_0(0)^{p^n}$.
In particular, for every integer~$n \ge 1$ we have~$|T_n(0)| = 1$.
This implies that every fixed point~$Q^{qp^n}$ in~$\mK \setminus \{ 0 \}$ is a zero of~$A$ and therefore a fixed point of~$Q^q$.
Furthermore, for every zero~$a$ of~$A$, the multiplicity of~$z = a$ as a zero of~$Q^{qp^n}(z) - z$ is equal to~$p^nm_a$.
This completes the proof of the proposition.
\end{proof}

\bibliographystyle{alpha}

\end{document}